\DeclareMathOperator{\tr}{T}
\newtheorem{problem}{Problem}
\newtheorem{reformulation}{Reformulation}
\theoremstyle{definition}
\newtheorem{algorithm}{Algorithm}
\newtheorem{notation}{Notation}
\def\blue{\textcolor{blue} }
\def\red{\textcolor{red} }
\definecolor{brown}{cmyk}{0, 0.8, 1, 0.6}
\definecolor{ggr}{rgb}{.20,.60,.22}
\def\brown{\textcolor{brown} }
\def\viol{\textcolor{darkgreen} }
\newcommand\pd[2]{\frac{\partial #1}{\partial #2}}
\newcommand\jp[1]{{\bf p}^{(#1)}}
\newcommand\jy[1]{{ y}^{(#1)}}
\newcommand{\affp}{{\mathcal A(2)}\times{\mathcal A(3)}}
\newcommand{\pta}{{{\mathcal {PGL}}(3)}\times{\mathcal A(3)}}
\newcommand{\fp}{{\mathcal {CP}}}
\newcommand{\ap}{{\mathcal {PP}}}
\newcommand{\stpf}{{ P_{\mathcal C}^0}}
\newcommand{\stpa}{{ P_{\mathcal P}^0}}
\begin{document}

\allowdisplaybreaks

\renewcommand{\thefootnote}{$\star$}

\renewcommand{\PaperNumber}{023}

\FirstPageHeading

\ShortArticleName{Object-Image Correspondence for Algebraic Curves under Projections}

\ArticleName{Object-Image Correspondence\\
for Algebraic Curves under Projections\footnote{This paper is a~contribution to the Special Issue
``Symmetries of Differential Equations: Frames, Invariants and~Applications''.
The full collection is available at
\href{http://www.emis.de/journals/SIGMA/SDE2012.html}{http://www.emis.de/journals/SIGMA/SDE2012.html}}}

\Author{Joseph M.~BURDIS, Irina A.~KOGAN and Hoon HONG}
\AuthorNameForHeading{J.M.~Burdis, I.A.~Kogan and H.~Hong}
\Address{North Carolina State University, USA}
\Email{\href{joe.burdis@gmail.com}{joe.burdis@gmail.com}, \href{iakogan@ncsu.edu}{iakogan@ncsu.edu}, \href{hong@ncsu.edu}{hong@ncsu.edu}}
\URLaddress{\url{http://www.linkedin.com/in/josephburdis},
\\
\hspace*{10.5mm}\url{http://www.math.ncsu.edu/~iakogan/}, \url{http://www.math.ncsu.edu/~hong/}}

\ArticleDates{Received October 01, 2012, in final form March 01, 2013; Published online March 14, 2013}

\Abstract{We present a~novel algorithm for deciding whether a~given planar curve is an image of
a~given spatial curve, obtained by a~central or a~parallel projection with unknown parameters.
The motivation comes from the problem of establishing a~correspondence between an object and an
image, taken by a~camera with unknown position and parameters.
A straightforward approach to this problem consists of setting up a~system of conditions on the
projection parameters and then checking whether or not this system has a~solution.
The computational advantage of the algorithm presented here, in comparison to algorithms based on
the straightforward approach, lies in a~significant reduction of a~number of real parameters that
need to be eliminated in order to establish existence or non-existence of a~projection that maps
a~given spatial curve to a~given planar curve.
Our algorithm is based on projection criteria that reduce the projection problem to a~certain
modification of the equivalence problem of planar curves under affine and projective
transformations.
To solve the latter problem we make an algebraic adaptation of signature construction that has been
used to solve the equivalence problems for smooth curves.
We introduce a~notion of a~classifying set of rational differential invariants and produce
explicit formulas for such invariants for the actions of the projective and the affine groups
on the plane.}

\Keywords{central and parallel projections; finite and affine cameras; camera decomposition;
curves; classifying differential invariants; projective and affine transformations;
signatures; machine vision}

\Classification{14H50; 14Q05; 14L24; 53A55; 68T45}

\rightline{\it The paper is dedicated to Peter Olver's 60th birthday.}

\renewcommand{\thefootnote}{\arabic{footnote}}
\setcounter{footnote}{0}

\section{Introduction}

Identifying an object in three-dimensional space with its planar image is
a~fundamental problem in computer vision.
In particular, given a~database of images (medical images, aerial photographs, human photographs),
one would like to have an algorithm to match a~given object in 3D with an image in the database,
even though a~position of the camera and its parameters may be unknown.
Since the defining features of many objects can be represented by curves, obtaining a~solution
for the identification problem for curves is essential.

\begin{figure}[t]
\centering \includegraphics[angle=270,width=55mm]{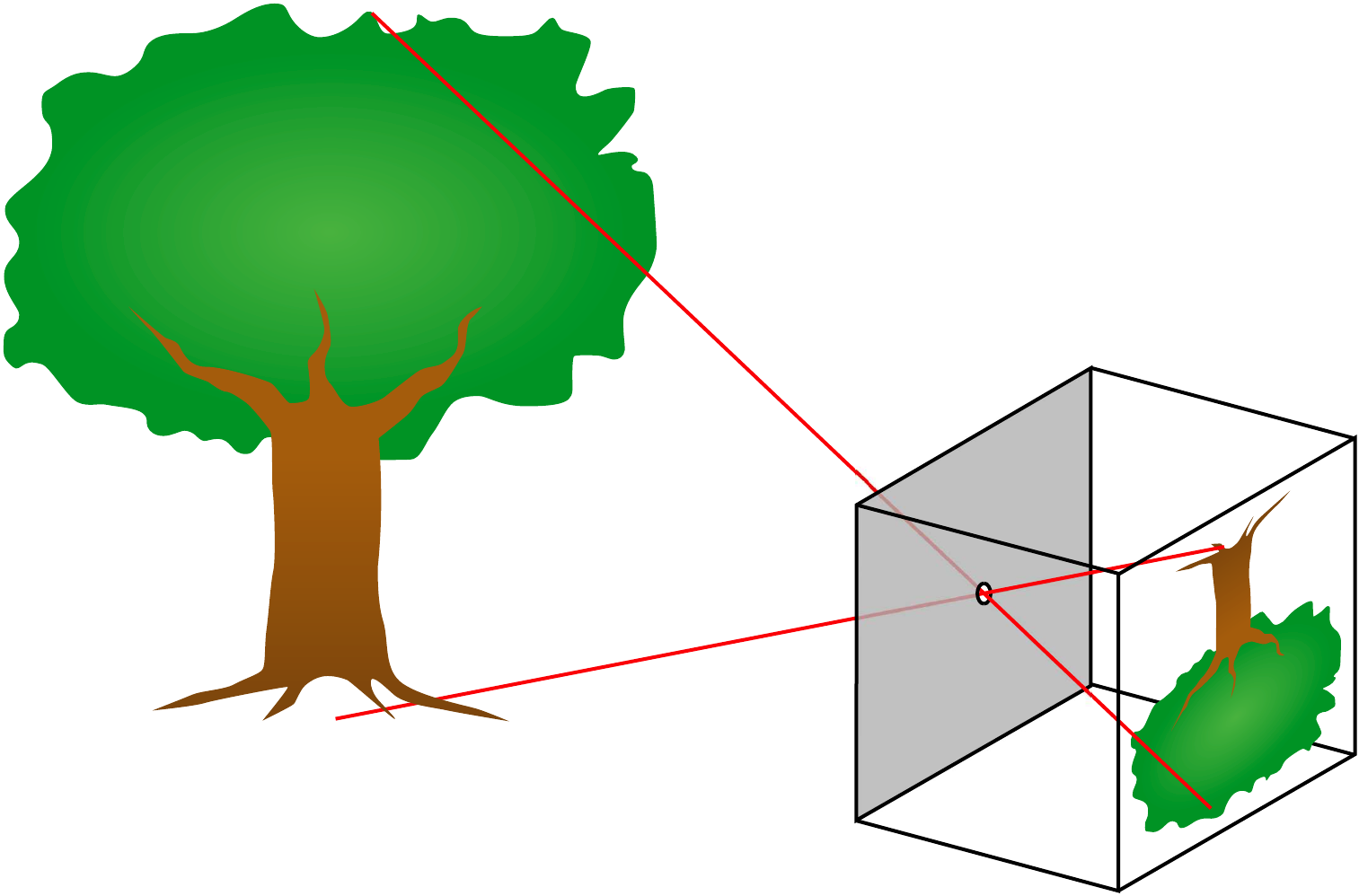}
\caption{A~pinhole camera, \url{http://en.wikipedia.org/wiki/File:Pinhole-camera.png}.}
\label{fig-camera}
\end{figure}

A central projection from $\mathbb{R}^3$ to $\mathbb{R}^2$ models a~pinhole camera pictured in
Fig.~\ref{fig-camera}.
It is described by a~linear fractional transformation %~\eqref{proj}:
\begin{gather}\label{proj}
x=\frac{p_{11} z_1+p_{12} z_2+p_{13} z_3+p_{14}}{p_{31} z_1+p_{32} z_2+p_{33} z_3+p_{34}},
\qquad
y=\frac{p_{21} z_1+p_{22} z_2+p_{23} z_3+p_{24}}{p_{31} z_1+p_{32} z_2+p_{33} z_3+p_{34}},
\end{gather}
where $(z_1,z_2,z_3)$ denote coordinates in $\mathbb{R}^3$,
$(x,y)$ denote coordinates in $\mathbb{R}^2$ and $p_{ij}$, $i=1,\dots,3$, $j=1,\dots,4$, are
real parameters of the projection, such that the left $3\times3$ submatrix of $3\times4$ matrix
$P=(p_{ij})$ has a~non-zero determinant.
Parameters represent the freedom to choose the center of the projection, the position of the image
plane and (in general, non-orthogonal) coordinate system on the image plane\footnote{It is clear
from~\eqref{proj} that multiplication of $P$ by a~non-zero constant does not change the projection
map.
Therefore, we can identify $P$ with a~point of the projective space~$\mathbb{PR}^{11}$, rather than
a~point in $\mathbb{R}^{12}$.
However, since we do not know which of the parameters are non-zero, in computations we have to keep
all 12 parameters.}. In the case when the distance between a~camera and an object is significantly
greater than the object depth, a~parallel projection provides a~good camera model.
A parallel projection has 8 parameters and can be described by a~$3\times4$ matrix of rank 3, whose
last row is $(0,0,0,1)$.
We review various camera models and related geometry in Section~\ref{cameras} (see
also~\cite{faugeras01, hartley04}).
In most general terms, the \emph{object-image correspondence problem}, or \emph{the projection
problem}, as we will call it from now on, can be formulated as follows:
\begin{problem}
\label{proj-problem}
Given a~subset ${\mathcal Z}$ of $\mathbb{R}^3$ and a~subset ${\mathcal X}$ of $\mathbb{R}^2$,
determine whether there exists a~projection $P\colon\mathbb{R}^3-\to\mathbb{R}^2$, such that
${\mathcal X}=P({\mathcal Z})$?\footnote{We borrow the notation $\Phi\colon V-\to W$ for
a~rational map from $V$ to $W$ from the algebraic geometry literature (see for
instance,~\cite{CLO96}) in order to emphasize that the map is defined almost everywhere on~$V$.
We use the same letter~$P$ to denote the $3\times4$ matrix $P=(p_{ij})$ and the map
$P\colon\mathbb{R}^3-\to\mathbb{R}^2$ defined by~\eqref{proj}.
Hence $P({\mathcal Z})=\{(x,y)\in\mathbb{R}^2\,|\,\exists\, {\mathbf z}\in{\mathcal Z}\text{ such
that~\eqref{proj} holds}\}$.}
\end{problem}

A straightforward approach to this problem consists of setting up a~system of conditions on the
projection parameters and then checking whether or not this system has a~solution.
In the case when ${\mathcal Z}$ and ${\mathcal X}$ are finite lists of points, a~solution based
on the straightforward approach can be found in~\cite{hartley04}.
For curves and surfaces under central projections, this approach is taken in~\cite{feldmar95}.
However, internal parameters of the camera are considered to be known in that paper and, therefore,
there are only 6 camera parameters in that study versus~12 considered here.
The method presented in~\cite{feldmar95} also uses an additional assumption that a~planar curve
${\mathcal X}\subset\mathbb{R}^2$ has at least two points, whose tangent lines coincide.
An alternative approach to the problem in the case when ${\mathcal Z}$ and ${\mathcal X}$ are
finite lists of points under parallel projections was presented
in~\cite{stiller07,stiller06}.
In these articles, the authors establish polynomial relationships that have to be satisfied by
coordinates of the points in the sets ${\mathcal Z}$ and ${\mathcal X}$ in order for a~projection
to exists.

Our approach to the projection problem for {\em curves} is somewhere in between the direct approach
and the implicit approach.
We exploit the relationship between the projection problem and equivalence problem under
group-actions to find the conditions that need to be satisfied by the object, the image and
the center of the projection\footnote{In the case of parallel projection, when the center is at
infinity, the conditions are on the direction of the projection.}. In comparison with the
straightforward approach, our solution leads to a~{\em significant reduction} of the number of
parameters that have to be eliminated in order to solve Problem~\ref{proj-problem} for curves.

All of the theoretical results of this paper are valid for arbitrary irreducible algebraic curves
(rational and non-rational), but the algorithms are presented for \emph{rational algebraic curves},
i.e.\
${\mathcal Z}=\overline{\{\Gamma(s)\,|\,s\text{ in the domain of }\Gamma\}}$ and ${\mathcal
X}=\overline{\{\gamma(t)\,|\, t\text{ in the domain of }\gamma\}}$ for rational maps
$\Gamma\colon\mathbb{R}-\to\mathbb{R}^3$ and $\gamma\colon\mathbb{R}-\to\mathbb{R}^2$.
A {\em bar} above a~set denotes the {\em Zariski closure} of the set\footnote{Recall that a~set
$\mathcal{W}\subset\mathbb{R}^n$ is Zariski closed if it equals to the zero set of a~system of
polynomials in $n$ variables.
The complement of a~Zariski closed set is called Zariski open.
A~Zariski open set is dense in $\mathbb{R}^n$.
A~Zariski closure~$\overline W$ of a~set~$W$ is the smallest (with respect to inclusions) Zariski
closed set containing~$W$.}.

Throughout the paper, we assume that ${\mathcal Z}$ is not a~straight line (and, therefore, its
image under any projection is a~one-dimensional constructible set).
Since, in general, $P({\mathcal Z})$ is not Zariski closed we must relax the projection condition
to $\overline{P({\mathcal Z})}={\mathcal X}$.
Under those conditions, Problem~\ref{proj-problem}, for central projections, can be reformulated as
the following real quantifier elimination problem:
\begin{reformulation}[straightforward approach]
\label{proj-problem-v2}
Given two rational maps $\Gamma$ and $\gamma$, determine the truth of the statement:
\begin{gather*}
\exists \, P\in\mathcal U\subset\mathbb{R}^{3\times4}
\quad
\forall\, s\text{ in the domain of }\Gamma
\quad
\exists\, t\in\mathbb{R}
\quad
P(\Gamma(s))=\gamma(t),
\end{gather*}
where $\mathcal U$ is the open subset of the set of $3\times4$ matrices defined by the condition
that the left $3\times3$ minor is nonzero\footnote{Note that,
in Reformulation~\ref{proj-problem-v2}, we decide whether ${P({\mathcal
Z})}\subset{\mathcal X}$, which appears to be weaker than $\overline{P({\mathcal Z})}={\mathcal X}$.
However, they are actually equivalent.
Since we assumed that ${\mathcal Z}$ is not a~line, the set $\overline{P({\mathcal Z})}$ is
one-dimensional.
Since ${\mathcal X}$ is rational algebraic curve, it is irreducible.
Hence ${P({\mathcal Z})}\subset{\mathcal X}
\
\Longleftrightarrow
\
\overline{P({\mathcal Z})}={\mathcal X}$.}.
\end{reformulation}

Real quantifier elimination problems are algorithmically solvable~\cite{tarski:51}.
A survey of subsequent developments in this area can be found, for instance, in~\cite{hong93}
and~\cite{caviness-johnson}.
Due to their high computational complexity (at least exponential) on the number of quantified
parameters, it is crucial to reduce the number of quantified parameters.
The main contribution of this paper is to provide another formulation of the problem which involves
significantly smaller number of quantified parameters.

We first begin by reducing the projection problem to the problem of deciding whether the given
planar curve ${\mathcal X}$ is equivalent to a~curve in a~certain family of {\em planar} curves
under an action of the projective group in the case of central projections, and under the action of
the affine group in the case of parallel projections.
The family of curves depends on 3 parameters in the case of central projections, and on 2
parameters in the case of parallel projections.

Then we solve these group-equivalence problems by an adaptation of differential signature
construction developed in~\cite{calabi98} for solving {\em local} equivalence problems for smooth
curves.
We give an algebraic formulation of the signature construction and show that it leads to a~solution
of {\em global} equivalence problems for algebraic curves.
For this purpose, we introduce a~notion of a~{\em classifying set of rational differential
invariants} and obtain such sets of invariants for the actions of the projective and affine
groups on the plane.
Following this method for the case of central projections, when ${\mathcal Z}$ and ${\mathcal X}$
are rational algebraic curves, we define two rational signature maps ${\mathcal S}|_{\mathcal
X}\colon\mathbb{R}-\to\mathbb{R}^2$ and ${\mathcal S}|_{\mathcal
Z}\colon\mathbb{R}^4-\to\mathbb{R}^2$.
Construction of these signature maps requires only differentiation and arithmetic operations and
is computationally trivial.
Then Problem~\ref{proj-problem-v2} becomes equivalent to \begin{reformulation}[signature approach]
\label{proj-problem-sig}
Given two rational maps ${S}|_{\mathcal X}$ and ${S}|_{\mathcal Z}$, determine the truth of the
statement:
\begin{gather*}
\exists\, c\in\mathcal{U}\subset\mathbb{R}^{3}
\quad
\forall\, s\text{ in the domain of }{S}_{\mathcal Z}(c,s)
\quad
\exists\, t\in\mathbb{R}
\quad
{S}_{\mathcal Z}(c,s)={S}_{\mathcal X}(t),
\end{gather*}
where $\mathcal U$ is a~certain Zariski open subset of $\mathbb{R}^3$.
\end{reformulation}
Note that Reformulations~\ref{proj-problem-v2} and~\ref{proj-problem-sig} have similar structure, but the former requires elimination of
14~parameters $(p_{11},\ldots,p_{34},s,t)$, while the latter requires elimination of only 5~parameters $(c_1,c_2,c_3,s,t)$.
The case of parallel projection is treated in the similar manner and leads to the reduction of the
number of real parameters that need to be eliminated from~10 to~4.

Although the relation between projections and group actions is known, our literature search did not
yield algorithms that exploit this relationship to solve the projection problem for curves in the
generic setting of cameras with unknown internal and external parameters.
The goal of the paper is to introduce such algorithms.
The significant reduction of the number of parameters in the quantifier elimination problem is
the main advantage of such algorithms.

\looseness=-1
A preliminary report on this project appeared in the conference proceedings~\cite{bk12}.
The current paper is significantly more comprehensive and rigorous, and also includes proofs
omitted in~\cite{bk12}.
Although the development of efficient implementation lies outside of the scope of this paper,
we made a~preliminary implementation of an algorithm based on signature construction presented here
and an algorithm based on the straightforward approach \emph{over complex numbers}.
The {\sc Maple} code and the experiments are posted on the internet~\cite{maple-code}.
The existence of a~projection over complex numbers provides necessary but not sufficient
condition for existence of a~real projection.

\looseness=-1
The paper is structured as follows.
In Section~\ref{cameras}, we review the basic facts about projections and cameras.
In Section~\ref{criteria}, we prove projection criteria that reduce the central and the parallel
projection problems to a~certain modification of the projective and the affine
group-equivalence problems for planar curves.
This criteria are straightforward consequences of known camera decompositions~\cite{hartley04}.
In Section~\ref{group-equiv}, we define the notion of a~\emph{classifying set of rational
differential invariants} and present a~solution of the \emph{global} group-equivalence problem
for planar algebraic curves based on these invariants.
This is an algebraic reformulation of a~solution of \emph{local} group-equivalence problem for
smooth curves~\cite{calabi98}.
In Section~\ref{algorithms}, combining the ideas from the previous two sections, we present and
prove an algorithm for solving the projection problem for rational algebraic curves and give
examples.
In Section~\ref{sect-ext}, we discuss possible adaptations of this algorithm to solve projection
problem for \emph{non-rational} algebraic curves and for finite lists of points.
We discuss the subtle difference between the discrete (with finitely many points) and the
continuous projection problems, showing that the solution for the discrete problem does not provide
an immediate solution to the projection problem for the curves represented by samples of points.
This leads us into the discussion of challenges that arise in application of our algorithms to
real-life images, given by discrete pixels, and of ideas for overcoming these challenges.
In Appendix~\ref{appendix}, we give explicit formulae for affine and projective
{classifying sets of rational invariants}.

\section{Projections and cameras}
\label{cameras}

We embed $\mathbb{R}^n$ into projective space $\mathbb{PR}^n$ and use homogeneous coordinates on
$\mathbb{PR}^n$ to express the map~\eqref{proj} by matrix multiplication.
\begin{notation} \emph{Square brackets} around matrices (and, in particular, vectors) will be used
to denote an equivalence class with respect to multiplication of a~matrix by a~nonzero scalar.
Multiplication of equivalence classes of matrices $A$ and $B$ of appropriate sizes is
well-defined by $[A] [B]:=[A\,B]$.
\end{notation} With this notation, a~point $(x,y)\in\mathbb{R}^2$ corresponds to a~point
$[x,y,1]=[\lambda x,\lambda y,\lambda]\in\mathbb{PR}^2$ for all $\lambda\neq0$, and a~point
$(z_1,z_2,z_3)\in\mathbb{R}^3$ corresponds to $[z_1,z_2,z_3,1]\in\mathbb{PR}^3$.
We will refer to the points in $\mathbb{PR}^n$ whose last homogeneous coordinate is zero as
\emph{points at infinity}.
In homogeneous coordinates projection~\eqref{proj} is a~map
$[P]\colon\mathbb{PR}^3\to\mathbb{PR}^2$ given by
\begin{gather*}
[x,y,1]^{\tr}=[P] [z_1,z_2,z_3,1]^{\tr},
\end{gather*}
where $P$ is $3\times4$ matrix of rank~3 and superscript $\tr$ denotes transposition.
Matrix $P$ has a~$1$-di\-mensional kernel.
Therefore, there exists a~point $[z^0_1,z^0_2,z^0_3,z^0_4]\in\mathbb{PR}^3$ whose image under the
projection is undefined (recall that $[0,0,0]$ is not a~point in $\mathbb{PR}^2$).
Geometrically, this point is the center of the projection.

In computer science literature (e.g.~\cite{hartley04}), a~camera is called {\it finite} if its
center is not at infinity.
A finite camera is modeled by a~matrix $P$, whose left $3\times3$ submatrix is non-singular.
Geometrically, finite cameras correspond to central projections from $\mathbb{R}^3$ to a~plane.
On the contrary, an \emph{infinite} camera has its center at an infinite point of~$\mathbb{PR}^3$.
An infinite camera is modeled by a~matrix $P$ whose left $3\times3$ submatrix is singular.
An infinite camera is called \emph{affine} if the preimage of the line at infinity in
$\mathbb{PR}^2$ is the plane at infinity in $\mathbb{PR}^3$.
An affine camera is modeled by a~matrix $P$ whose last row is $(0,0,0,1)$.
In this case map~\eqref{proj} becomes
\begin{gather*}
x=p_{11} z_1+p_{12} z_2+p_{13} z_3+p_{14},
\qquad
y=p_{21} z_1+p_{22} z_2+p_{23} z_3+p_{24}.
\end{gather*}
Geometrically, affine cameras correspond to {\em parallel projections} from
$\mathbb{R}^3$ to a~plane\footnote{Parallel projections are also called {\em generalized weak
perspective projections}~\cite{stiller07, stiller06}.}. Eight degrees of freedom reflect a~choice
of the direction of a~projection, a~position of the image plane and a~choice of linear system of
coordinates on the image plane.
In fact, by allowing the freedom to choose a~non-orthogonal coordinate system on the image plane,
we may always assume that we project on one of the coordinate planes.
\begin{definition}%\label{def:cameras}
A set of equivalence classes $[P]$, where $P$ is a~$3\times4$ matrix whose left $3\times3$
submatrix is non-singular, is called \emph{the set of central projections} and is denoted~$\fp$.

A set of equivalence classes $[P]$, where $P$ has rank 3 and its last row is $(0,0,0,\lambda)$,
$\lambda\neq0$, is called \emph{the set of parallel projections} and is denoted $\ap$.
\end{definition}

Equation~\eqref{proj} determines a~{\em central projection} when $[P]\in\fp$ and
it determines a~parallel projection when $[P]\in\ap$.
Sets $\fp$ and $\ap$ are disjoint.
Projections that are not included in these two classes correspond to infinite, non-affine
cameras.
These are not frequently used in computer vision and are not considered in this paper.

\section{Reduction to the group-equivalence problem}\label{criteria}

\begin{definition}%\label{def-proj}
We say that a~curve ${\mathcal Z}\subset\mathbb{R}^3$ {\em projects to} ${\mathcal
X}\subset\mathbb{R}^2$ if there exists a~$3\times4$ matrix $P$ of rank 3 such that ${\mathcal
X}=\overline{P({\mathcal Z})}$, where
\begin{gather*}%\label{eq-pz}
P({\mathcal Z})=
\{(x,y)\in\mathbb{R}^2\,|\,\exists\, {\mathbf z} \in {\mathcal Z}\text{ such that \eqref{proj} holds}\}.
\end{gather*}
\end{definition}
Recall that for every algebraic curve ${\mathcal X}\subset\mathbb{R}^n$ there
exists a~unique projective algebraic curve $[{\mathcal X}]\subset\mathbb{PR}^n$ such that
$[{\mathcal X}]$ is the smallest projective variety containing ${\mathcal X}$ (see~\cite{fulton}).
It is not difficult to check that ${\mathcal X}=\overline{P({\mathcal Z})}$ is equivalent to
$[{\mathcal X}]=\overline{[P][{\mathcal Z}]}$, where $[P][{\mathcal Z}]=\left\{[P][{\mathbf
z}]\, |\,[{\mathbf z}]\in[{\mathcal Z}],\,[P][{\mathbf z}]\neq[0]\right\}$.

\begin{definition}
The \emph{projective group}\footnote{We will occasionally include a~field in
the group-notation, e.g.~${\mathcal{PGL}}(n,\mathbb{C})$ or ${\mathcal{PGL}}(n,\mathbb{R})$.
If the field is not indicated we assume that the group is defined over $\mathbb{R}$.}
${\mathcal{PGL}}(n+1)$ is a~quotient of the general linear group ${\mathcal{GL}}(n+1)$, consisting
of $(n+1)\times(n+1)$ non-singular matrices, by a~1-dimensional abelian subgroup $\lambda I$, where
$\lambda\neq0\in\mathbb{R}$ and $I$ is the identity matrix.
Elements of ${\mathcal{PGL}}(n+1)$ are equivalence classes $[B]=[\lambda B]$, where $\lambda\neq0$
and $B\in{\mathcal{GL}}(n+1)$.

The \emph{affine} group ${\mathcal A}(n)$ is a~subgroup of ${\mathcal{PGL}}(n+1)$ whose
elements $[B]$ have a~representative $B\in{\mathcal{GL}}(n+1)$ with the last row equal to
$(0,\dots,0,1)$.

The \emph{equi-affine} group ${\mathcal{SA}}(n)$ is a~subgroup of ${\mathcal A}(n)$ whose
elements $[B]$ have a~representative $B\in{\mathcal{GL}}(n+1)$ with determinant $1$ and the last
row equal to $(0,\dots,0,1)$.
\end{definition} In homogeneous coordinates, the standard action of the projective group
${\mathcal{PGL}}(n+1)$ on $\mathbb{PR}^n$ is defined by multiplication
\begin{gather}
\label{homog-act}
[z_1,\dots,z_n,z_0]^{\tr}\to[B]\,[z_1,\dots,z_n,z_0]^{\tr}.
\end{gather}
The action~\eqref{homog-act} induces linear-fractional action of ${\mathcal{PGL}}(n+1)$ on
$\mathbb{R}^n$.\footnote{Linear-fractional action of ${\mathcal{PGL}}(n+1)$ on $\mathbb{R}^n$ is an
example of a~rational action of an algebraic group on an algebraic variety.
General definition of a~rational action can be found in~\cite[Definition~2.1]{hk:focm}.} The
restriction of~\eqref{homog-act} to~${\mathcal A}(n)$ induces an action on~$\mathbb{R}^n$
consisting of compositions of linear transformations and translations.

\begin{definition}%\label{def-equi}
We say that two curves ${\mathcal X}_1\subset\mathbb{R}^n$ and ${\mathcal X}_2\subset\mathbb{R}^n$
are ${\mathcal{PGL}}(n+1)$-equivalent if there exists $[A]\in{\mathcal{PGL}}(n+1)$, such that
$[{\mathcal X}_2]=\left\{[A][{\mathbf p}]\, | \,[{\mathbf p}]\in[{\mathcal X}_1]\right\}$.
We then write ${\mathcal X}_2=A\cdot{\mathcal X}_1$ or $[{\mathcal X}_2]=[A][{\mathcal X}_1]$.
If $[A]\in G$, where $G$ is a~subgroup of ${\mathcal{PGL}}(n+1)$, we say that ${\mathcal X}_1$ and
${\mathcal X}_2$ are $G$-equivalent and write ${\mathcal X}_1\underset G{\cong}{\mathcal X}_2$.
\end{definition}

Before stating the projection criteria, we make the following simple, but
important observations.
\begin{proposition}\label{proj-classes}\qquad\null

\begin{enumerate}\itemsep=0pt \item [$(i)$] If ${\mathcal Z}\subset\mathbb{R}^3$ projects to ${\mathcal
X}\subset\mathbb{R}^2$ by a~parallel projection, then any curve that is ${\mathcal
A}(3)$-equivalent to ${\mathcal Z}$ projects to any curve that is ${\mathcal A}(2)$-equivalent to
${\mathcal X}$ by a~parallel projection.
In other words, parallel projections are defined on affine equivalence classes of curves.
\item[$(ii)$] If ${\mathcal Z}\subset\mathbb{R}^3$ projects to ${\mathcal X}\subset\mathbb{R}^2$ by
a~central projection then any curve in $\mathbb{R}^3$ that is ${\mathcal A}(3)$-equivalent
to ${\mathcal Z}$ projects to any curve on $\mathbb{R}^2$ that is ${\mathcal{PGL}}(3)$-equivalent
to ${\mathcal X}$ by a~central projection.
\end{enumerate}
\end{proposition}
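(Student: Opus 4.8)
The plan is to pass to homogeneous coordinates, where ``projects to'' becomes an identity of the form $[\mathcal X]=\overline{[P][\mathcal Z]}$ and the two group equivalences become left and right multiplication by matrices, and then to verify that composing these matrices stays inside the appropriate class of projection matrices. Recall from the remark following the definition of ``projects to'' that $\mathcal X=\overline{P(\mathcal Z)}$ is equivalent to $[\mathcal X]=\overline{[P][\mathcal Z]}$, where $[P]\colon\mathbb{PR}^3\to\mathbb{PR}^2$ and $[P][\mathcal Z]=\{[P][\mathbf z]\,|\,[\mathbf z]\in[\mathcal Z],\ [P][\mathbf z]\neq[0]\}$. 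Suppose $\mathcal Z$ projects to $\mathcal X$ via a matrix $P$, which is parallel in case $(i)$ and central in case $(ii)$. Let $\mathcal Z_1$ be $\mathcal A(3)$-equivalent to $\mathcal Z$; since $\mathcal A(3)$ is a group we may write $[\mathcal Z]=[A][\mathcal Z_1]$ for some $[A]\in\mathcal A(3)$. Let $\mathcal X_1$ be equivalent to $\mathcal X$ under the relevant group, so $[\mathcal X_1]=[C][\mathcal X]$ with $[C]\in\mathcal A(2)\subset\mathcal{PGL}(3)$ in case $(i)$ and $[C]\in\mathcal{PGL}(3)$ in case $(ii)$. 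The goal is to show $[\mathcal X_1]=\overline{[CPA][\mathcal Z_1]}$ with $[CPA]$ in the correct class; by definition the equality says exactly that $\mathcal Z_1$ projects to $\mathcal X_1$ by the matrix $CPA$.

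To obtain that equality I would interchange the Zariski closure with the linear action: $[C]$ is an automorphism of $\mathbb{PR}^2$ as an algebraic variety (a homeomorphism for the Zariski topology whose inverse $[C^{-1}]$ is again a morphism), so it carries $\overline{[P][\mathcal Z]}$ onto $\overline{[C]([P][\mathcal Z])}$, and, $[C]$ being invertible, $[C]([P][\mathcal Z])=[CP][\mathcal Z]=[CP][A][\mathcal Z_1]=[CPA][\mathcal Z_1]$, where I used $[\mathcal Z]=[A][\mathcal Z_1]$ and $[C][P][A]=[CPA]$. Hence $[\mathcal X_1]=[C][\mathcal X]=[C]\big(\overline{[P][\mathcal Z]}\big)=\overline{[CPA][\mathcal Z_1]}$. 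Because $\mathcal Z$ is not a line (a standing assumption), $[P][\mathcal Z]$ is a nonempty one-dimensional constructible set, so none of these manipulations is vacuous.

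It remains to check the class of $CPA$, which is the only computational point and the only step requiring a word of care. Write $A$ in $2\times2$ block form with upper-left $3\times3$ block $N$, zero lower-left block and unit lower-right entry; since $\det A=\det N\neq0$ we have $N\in\mathcal{GL}(3)$. In case $(ii)$, write $P=(M\mid m)$ with $M\in\mathcal{GL}(3)$ the left submatrix; then $PA=(MN\mid Mn+m)$ and $CPA=(CMN\mid C(Mn+m))$, whose left $3\times3$ block $CMN$ is non-singular as a product of non-singular matrices, so $[CPA]\in\fp$. In case $(i)$, the last row of $P$ is $\lambda(0,0,0,1)$ with $\lambda\neq0$, hence the last row of $PA$ is $\lambda(0,0,0,1)A=\lambda(0,0,0,1)$, and, $C$ being affine with last row $(0,0,1)$, the last row of $CPA$ equals that of $PA$, namely $\lambda(0,0,0,1)$; since $\rank CPA=3$ (a rank-$3$ matrix times invertible matrices), $[CPA]\in\ap$. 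There is no genuine difficulty beyond this bookkeeping: the only point worth flagging is the closure/linear-action interchange in the second paragraph, which rests on an element of $\mathcal{PGL}(3)$ being an algebraic automorphism of $\mathbb{PR}^2$, and the block computation above is exactly what records the asymmetry in the statement (an affine change on the $\mathbb{R}^3$ side versus an affine or projective change, respectively, on the $\mathbb{R}^2$ side).
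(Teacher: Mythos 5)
Your proof is correct and follows essentially the same route as the paper's: the paper writes the candidate projection as $[A]\,[P]\,[B^{-1}]$ (your $[C][P][A]$), moves the Zariski closure past the invertible projective map, and asserts that the product stays in $\ap$ (resp.\ $\fp$). You simply supply the two details the paper leaves implicit --- the justification of the closure/automorphism interchange and the block computation verifying that $CPA$ remains a parallel (resp.\ central) projection matrix --- both of which check out.
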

\begin{proof}
$(i)$ Assume that there exists a~parallel projection $[P]\in\ap$ such that $[{\mathcal
X}]=\overline{[P][{\mathcal Z}]}$.
Then for all $(A,B)\in\affp$ we have
\begin{gather*}
[A]\,[{\mathcal X}]=[A]\,\overline{[P]\,[B^{-1}]\,[B]\,[{\mathcal Z}]}=
\overline{[A]\,[P]\,[B^{-1}]\,[B]\,[{\mathcal Z}]}.
\end{gather*}
Since $[A]\,[P]\,[B^{-1}]\in\ap$, curve $B\cdot{\mathcal Z}$ projects to $A\cdot{\mathcal X}$.
$(ii)$ is proved similarly.
\end{proof}

\begin{remark}
\label{rem-pr-gr}
It is known that if ${\mathcal X}_1$ and ${\mathcal X}_2$ are images of a~curve ${\mathcal Z}$
under two central projections {\em with the same center}, then ${\mathcal X}_1$ and ${\mathcal
X}_2$ are ${\mathcal{PGL}}(3)$-equivalent, but if the centers of the projections are not the same
this is no longer true (see Example~\ref{ex-cp-cubics}).
Similarly, images of ${\mathcal Z}$ under various parallel projections may not be ${\mathcal
A}(2)$-equivalent.
\end{remark}

\begin{theorem}[central projection criterion]
\label{main-finite-camera}
A curve ${\mathcal Z}\subset\mathbb{R}^3$ projects to a~curve ${\mathcal X}\subset\mathbb{R}^2$ by
a~central projection if and only if there exist $c_1,c_2,c_3\in\mathbb{R}$ such that
${\mathcal X}$ is ${\mathcal{PGL}}(3)$-equivalent to a~planar curve
\begin{gather}
\label{poset}
\tilde{\mathcal Z}_{c_1,c_2,c_3}=\overline{\left\{\left(\frac{z_1+c_1}{z_3+c_3},
 \frac{z_2+c_2}{z_3+c_3}\right) \Big| (z_1,z_2,z_3)\in{\mathcal Z}\right\}}.
\end{gather}
\end{theorem}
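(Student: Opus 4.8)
The plan is to derive both implications from the standard decomposition of a finite camera matrix, working throughout in homogeneous coordinates. Write $c=(c_1,c_2,c_3)^{\tr}$ as a column vector and let $P_c:=[\,I_3\mid c\,]$ be the $3\times4$ matrix whose left block is the identity and whose last column is $c$. Since $[P_c]\,[z_1,z_2,z_3,1]^{\tr}=[\,z_1+c_1,\,z_2+c_2,\,z_3+c_3\,]^{\tr}$, the linear-fractional map~\eqref{proj} attached to $P_c$ is exactly $(z_1,z_2,z_3)\mapsto\big(\tfrac{z_1+c_1}{z_3+c_3},\tfrac{z_2+c_2}{z_3+c_3}\big)$, so $\tilde{\mathcal Z}_{c_1,c_2,c_3}=\overline{P_c({\mathcal Z})}$ by definition~\eqref{poset}; in particular $\mathcal Z$ always projects to $\tilde{\mathcal Z}_{c_1,c_2,c_3}$ by the central projection $[P_c]\in\fp$. (Since $\mathcal Z$ is irreducible and not a line it is not contained in any line through $-c$, so $P_c$ does not collapse it and $\tilde{\mathcal Z}_{c_1,c_2,c_3}$ is a genuine planar curve; the single point $-c$, should it happen to lie on $\mathcal Z$, is removed without affecting the closure.)

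First I would record the elementary observation that $[P]\in\fp$ if and only if $P=M\,P_c$ for some $M\in{\mathcal{GL}}(3)$ and some $c\in\mathbb{R}^3$: writing $P=[\,M\mid p_4\,]$ with $M$ nonsingular --- which is exactly the defining condition of a central projection --- gives $P=M\,[\,I_3\mid M^{-1}p_4\,]$, so $c=M^{-1}p_4$ works; conversely the left $3\times3$ block of $M\,P_c$ is $M$, which is nonsingular. Geometrically $-c$ is the center of the projection, so a central projection is the composition of the ``standard'' projection $P_c$ from its center with a projective change of coordinates $[M]$ on the image plane.

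For the forward direction, I would assume $[P]\in\fp$ with $\overline{P({\mathcal Z})}={\mathcal X}$, decompose $P=M\,P_c$, and compute in homogeneous coordinates, using the equivalence of ${\mathcal X}=\overline{P({\mathcal Z})}$ with $[{\mathcal X}]=\overline{[P][{\mathcal Z}]}$ noted just after the definition of ``projects to'':
\begin{gather*}
[{\mathcal X}]=\overline{[M]\,[P_c]\,[{\mathcal Z}]}=[M]\,\overline{[P_c]\,[{\mathcal Z}]}=[M]\,[\tilde{\mathcal Z}_{c_1,c_2,c_3}].
\end{gather*}
Here the middle equality is the key point: $[M]\in{\mathcal{PGL}}(3)$ acts as an automorphism of $\mathbb{PR}^2$ and therefore commutes with taking Zariski closure; the last equality is $\tilde{\mathcal Z}_{c_1,c_2,c_3}=\overline{P_c({\mathcal Z})}$ read projectively. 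This says precisely that ${\mathcal X}$ is ${\mathcal{PGL}}(3)$-equivalent to $\tilde{\mathcal Z}_{c_1,c_2,c_3}$. For the converse I would reverse this: given $[{\mathcal X}]=[M]\,[\tilde{\mathcal Z}_{c_1,c_2,c_3}]$ for some $[M]\in{\mathcal{PGL}}(3)$ and some $c_1,c_2,c_3$, set $P:=M\,P_c$, note $[P]\in\fp$, and run the chain of equalities backwards to obtain $[{\mathcal X}]=\overline{[P][{\mathcal Z}]}$, i.e.\ $\mathcal Z$ projects to $\mathcal X$ by the central projection $[P]$.

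I do not expect a genuine obstacle here: the content is just the camera decomposition $P=M\,P_c$ together with the fact that $\tilde{\mathcal Z}_{c_1,c_2,c_3}$ is literally the image of $\mathcal Z$ under the standard projection $P_c$. The only points needing care are bookkeeping --- matching the linear-fractional reading of $P_c$ to the formula in~\eqref{poset}, the commutation of $[M]$ with Zariski closure, and the harmless point removal just mentioned. It is also worth noting that the forward direction makes Remark~\ref{rem-pr-gr} precise: two central projections of $\mathcal Z$ from the same center $-c$ have images differing exactly by the projective factor $[M]$, hence are ${\mathcal{PGL}}(3)$-equivalent.
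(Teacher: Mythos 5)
Your proof is correct and takes essentially the same route as the paper's: your factorization $P=M\,P_c$ with $P_c=[\,I_3\mid c\,]$ and $c=M^{-1}p_4$ is exactly the paper's decomposition $[P]=[A][\stpf][B]$ in~\eqref{eq-fp-dec} (since $\stpf B=[\,I_3\mid c\,]$ and the condition $p_{*4}=c_1p_{*1}+c_2p_{*2}+c_3p_{*3}$ is the same as $c=A^{-1}p_{*4}$). The only difference is presentational: you make explicit the commutation of $[M]$ with Zariski closure and the harmless removal of the center, which the paper leaves implicit.
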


\begin{proof}
($\Rightarrow$) Assume there exists a~central projection $[P]$ such that ${\mathcal
X}=\overline{P({\mathcal Z})}$.
Then $P$ is a~$3\times4$ matrix, whose left $3\times3$ submatrix is non-singular.
Therefore there exist $c_1,c_2,c_3\in\mathbb{R}$ such that
$p_{*4}=c_1 p_{*1}+c_2 p_{*2}+c_3 p_{*3}$, where $p_{*j}$ denotes the $j$-th column of the
matrix $P$.
We observe that
\begin{gather}
\label{eq-fp-dec}
[A][\stpf][B]=[P],
\end{gather}
where $A$ is the left $3\times3$ submatrix of $P$,
\begin{gather}
\label{eq-fPB}
\stpf:=\left(
\begin{matrix}1&0&0&0
\\
0&1&0&0
\\
0&0&1&0
\end{matrix}
\right) \qquad \text{and} \qquad B:=\left(
\begin{matrix}
1&0&0&c_1
\\
0&1&0&c_2
\\
0&0&1&c_3
\\
0&0&0&1
\end{matrix}
\right).
\end{gather}
Note that $[A]$ belongs to ${\mathcal{PGL}}(3)$.
Since
\begin{gather*}
[\stpf][B][z_1,z_2,z_3,\,1]^{\tr}=[z_1+c_1,z_2+c_2,z_3+c_3]^{\tr},
\end{gather*}
then $[{\mathcal X}]=[A][\tilde{\mathcal Z}_{c_1,c_2,c_3}]$, where $\tilde{\mathcal
Z}_{c_1,c_2,c_3}$ is defined by~\eqref{poset}.

($\Leftarrow$) To prove the converse direction we assume that there exists
$[A]\in{\mathcal{PGL}}(3)$ and $c_1,c_2,c_3\in\mathbb{R}$ such that $[{\mathcal
X}]=[A][\tilde{\mathcal Z}_{c_1,c_2,c_3}]$, where $\tilde{\mathcal Z}_{c_1,c_2,c_3}$ is defined
by~\eqref{poset}.
A direct computation shows that ${\mathcal Z}$ is projected to ${\mathcal X}$ by the central
projection $[P]=[A]\,[\stpf]\,[B]$, where $B$ and $[\stpf]$ are given by~\eqref{eq-fPB}.
\end{proof}

We note that the map
\begin{gather}
\label{eq-canc}
x=\frac{z_1+c_1}{z_3+c_3},
\qquad
y=\frac{z_2+c_2}{z_3+c_3}
\end{gather}
is a~projection centered $(-c_1,-c_2,-c_3)$ to the plane $z_3=1$ with coordinates on the image
plane induced from $\mathbb{R}^3$, namely, $x=z_1$ and $y=z_2$.
We call~\eqref{eq-canc} \emph{the canonical projection centered at $(-c_1,-c_2,-c_3)$}.
It follows from decomposition~\eqref{eq-fp-dec} that any central projection is a~composition of
a~translation in $\mathbb{R}^3$ (corresponding to translation of the camera center to the origin),
the canonical projection $\stpf$ centered at the origin, and a~projective transformation on the
image plane.

\begin{remark}[$\fp$ is a~homogeneous space] It is easy to check that the map
\[
{\Psi}: \ \left(\pta\right)\times\fp\to\fp
\]
 defined by
\begin{gather}
\label{product-action1}
{\Psi}\big(([A],[B]), [P])=[A] [P] \big[B^{-1}\big]
\end{gather}
for $[P]\in\fp$ and $([A],[B])\in\pta$ is an action of the product group~$\pta$ on the set of
central projections $\fp$.
Decomposition~\eqref{eq-fp-dec} shows that this action is transitive.
The stabilizer of the canonical projection $\stpf$ centered at the origin is a~$9$-dimensional group
\begin{gather*}%\label{Hf}
H_{\mathcal C}^0=\left\{\left([A], \left[
\begin{matrix}
A&\bf{0}^{\tr}
\\
\bf{0}&1
\end{matrix}
\right]\right)\right\},\qquad \text{where} \quad A\in{\mathcal{GL}}(3).
\end{gather*}
The set of central projections $\fp$ is, therefore, diffeomorphic to the homogeneous space
\[
\pta/H_{\mathcal C}^0.
\]
\end{remark}

\begin{theorem}[parallel projection criterion]
\label{main-affine-camera}
A curve ${\mathcal Z}\subset\mathbb{R}^3$ projects to a~curve ${\mathcal X}\subset\mathbb{R}^2$ by
a~parallel projection if and only if there exist $c_1,c_2\in\mathbb{R}$ and an ordered triplet
$(i,j,k)\in\left\{(1,2,3), (1,3,2), (2,3,1)\right\}$ such that ${\mathcal X}$ is ${\mathcal
A}(2)$-equivalent to
\begin{gather}
\label{delta-set}
\tilde{\mathcal Z}^{i,j,k}_{c_1,c_2}=\overline{\left\{\left(z_i+c_1 z_k,
z_j+c_2 z_k\right) \,\big|\, (z_1,z_2,z_3)\in{\mathcal Z}\right\}}.
\end{gather}
\end{theorem}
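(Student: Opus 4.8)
The plan is to reprise the proof of Theorem~\ref{main-finite-camera} almost verbatim, with a decomposition $[P]=[A]\,[\stpa]$ of an affine camera playing the role of the decomposition $[A]\,[\stpf]\,[B]$ of a finite camera. For the forward implication, suppose $\mathcal Z$ projects to $\mathcal X$ by a parallel projection, realised by some $[P]\in\ap$. Rescaling, I may assume the last row of $P$ is $(0,0,0,1)$, so $P=\left(\begin{smallmatrix}M&\mathbf v\\ \mathbf 0&1\end{smallmatrix}\right)$ with $M$ of size $2\times3$ and $\mathbf v\in\mathbb R^2$. Since $P$ has rank $3$ and its last row is $(0,0,0,1)$, the block $M$ must have rank $2$; hence at least one of the three $2\times2$ minors of $M$, indexed by the column pairs $\{1,2\}$, $\{1,3\}$, $\{2,3\}$, is non-zero. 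I fix such a pair, order it as $(i,j)$ with $i<j$, and let $k$ be the remaining index, so that $(i,j,k)$ is exactly one of the three admissible triplets. Writing $M_{ij}$ for the corresponding invertible $2\times2$ submatrix and $\bar M:=M_{ij}^{-1}M$, the matrix $\bar M$ has $i$-th column $(1,0)^{\tr}$, $j$-th column $(0,1)^{\tr}$, and $k$-th column $(c_1,c_2)^{\tr}$ for some $c_1,c_2\in\mathbb R$; equivalently $\bar M[z_1,z_2,z_3]^{\tr}=(z_i+c_1z_k,\ z_j+c_2z_k)^{\tr}$. Setting $\stpa:=\left(\begin{smallmatrix}\bar M&\mathbf 0\\ \mathbf 0&1\end{smallmatrix}\right)$ and $A:=\left(\begin{smallmatrix}M_{ij}&\mathbf v\\ \mathbf 0&1\end{smallmatrix}\right)$, a direct multiplication gives $P=A\,\stpa$, while $[\stpa]\in\ap$ and $[A]\in\mathcal A(2)$ (its last row is $(0,0,1)$ and its determinant is $\det M_{ij}\neq0$). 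Since $[\stpa][z_1,z_2,z_3,1]^{\tr}=[z_i+c_1z_k,z_j+c_2z_k,1]^{\tr}$, we get $\overline{[\stpa][\mathcal Z]}=[\tilde{\mathcal Z}^{i,j,k}_{c_1,c_2}]$, and applying the projective map $[A]$ — which, being a homeomorphism in the Zariski topology, commutes with closure — yields $[\mathcal X]=\overline{[P][\mathcal Z]}=[A][\tilde{\mathcal Z}^{i,j,k}_{c_1,c_2}]$, i.e.\ $\mathcal X$ is $\mathcal A(2)$-equivalent to $\tilde{\mathcal Z}^{i,j,k}_{c_1,c_2}$.

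For the converse, given $c_1,c_2$, an admissible triplet $(i,j,k)$, and $[A]\in\mathcal A(2)$ with $[\mathcal X]=[A][\tilde{\mathcal Z}^{i,j,k}_{c_1,c_2}]$, I would take $\stpa$ to be the $3\times4$ matrix realising $[z_1,z_2,z_3,1]^{\tr}\mapsto[z_i+c_1z_k,z_j+c_2z_k,1]^{\tr}$ (the same $\stpa$ as above) and put $P:=A\,\stpa$. Then $P$ has last row $(0,0,0,1)$ and rank $3$ (since $A$ is invertible and $\stpa$ has rank $3$), so $[P]\in\ap$, and $\overline{P(\mathcal Z)}=\overline{[A][\stpa][\mathcal Z]}=[A][\tilde{\mathcal Z}^{i,j,k}_{c_1,c_2}]=\mathcal X$, so $\mathcal Z$ projects to $\mathcal X$ by a parallel projection, as required.

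I do not expect a genuine obstacle here; the whole argument is linear-algebra bookkeeping parallel to Theorem~\ref{main-finite-camera}. The only points that warrant care are: (a) the elementary fact that a rank-$2$ matrix of size $2\times3$ factors as (invertible $2\times2$)$\,\cdot\,$(a shear matrix in one of three normal forms), together with the observation that the column pairs $\{1,2\},\{1,3\},\{2,3\}$ genuinely exhaust all cases, so that the three triplets $(1,2,3),(1,3,2),(2,3,1)$ suffice; (b) the order convention $i<j$, which is what forces the triplet produced from a non-vanishing minor into the prescribed list; and (c) noting that, in contrast with the central case, no translation in $\mathbb R^3$ appears in the decomposition, because such a translation merely shifts the image by a translation and is therefore absorbed into the $\mathcal A(2)$-factor $[A]$ on the image plane — which is precisely why $\tilde{\mathcal Z}^{i,j,k}_{c_1,c_2}$ carries only the two shear parameters $c_1,c_2$ and no additive constants.
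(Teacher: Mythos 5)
Your proof is correct and follows essentially the same route as the paper's: you identify a non-singular $2\times2$ column block of the affine part of $P$, absorb it (together with the translation $\mathbf v$) into an $\mathcal A(2)$-factor $[A]$, and express the remainder as the shear-projection $(z_1,z_2,z_3)\mapsto(z_i+c_1z_k,\,z_j+c_2z_k)$; the only cosmetic difference is that the paper writes this remainder as a product $[\stpa][B]$ of the standard orthogonal projection with a $4\times4$ shear matrix $B$, whereas you merge the two into a single $3\times4$ matrix. Your case analysis of the three column pairs matching the three admissible triplets, and your converse, agree with the paper's argument.
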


\begin{proof}
($\Rightarrow$) Assume there exists a~parallel projection $[P]$ such that ${\mathcal
X}=\overline{P({\mathcal Z})}$.
Then $[P]$ can be represented by a~matrix
\begin{gather*}%\label{aff:proj}
P=\left(
\begin{matrix}
p_{11}&p_{12}&p_{13}&p_{14}
\\
p_{21}&p_{22}&p_{23}&p_{24}
\\
0&0&0&1
\end{matrix}
\right)
\end{gather*}
of rank 3.
Therefore there exist $1\leq i<j\leq3$ such that the rank of the submatrix $\left(
\begin{matrix}p_{1i}&p_{1j}
\\
p_{2i}&p_{2j}
\end{matrix}
\right)$ is 2.
Then for $1\leq k\leq3$, such that $k\neq i$ and $k\neq j$, there exist $c_1, c_2\in\mathbb{R}$,
such that
$\left(
\begin{matrix}p_{1k}
\\
p_{2k}
\end{matrix}
\right)=c_1 \left(
\begin{matrix}p_{1i}
\\
p_{2i}
\end{matrix}
\right)+c_2 \left(
\begin{matrix}p_{1j}
\\
p_{2j}
\end{matrix}
\right)$.
We define
$A:=\left(
\begin{matrix}p_{1i}&p_{1j}&p_{14}
\\
p_{2i}&p_{2j}&p_{24}
\\
0&0&1
\end{matrix}
\right)$
and define $B$ to be the matrix whose columns are vectors $b_{*i}:=(1,0,0,0)^{\tr}$,
$b_{*j}:=(0,1,0,0)^{\tr}$, $b_{*k}:=(c_1,c_2,1,0)^{\tr}$, $b_{*4}=(0,0,0,1)^{\tr}$.
We observe that
\begin{gather}
\label{eq-pp-decomp}
[P]=[A][\stpa][B],\qquad \text{where} \quad \stpa:=\left(
\begin{matrix}
1&0&0&0
\\
0&1&0&0
\\
0&0&0&1
\end{matrix}
\right).
\end{gather}
Since $[\stpa][B][{\mathcal Z}]=[\tilde{\mathcal Z}^{i,j,k}_{c_1,c_2}]$, then $[{\mathcal
X}]=[A][\tilde{\mathcal Z}^{i,j,k}_{c_1,c_2}]$.
Observe that $[A]\in{\mathcal A}(2)$ and the direct statement is proved.

($\Leftarrow$) To prove the converse direction we assume that there exist $[A]\in{\mathcal A}(2)$,
two real numbers~$c_1$ and~$c_2$, and a~triplet of indices such that
$(i, j, k)\in\left\{(1,2,3), (1,3,2), (2,3,1)\right\}$, such that $[{\mathcal
X}]=[A][\tilde{\mathcal Z}^{i,j,k}_{c_1,c_2}]$, where a~planar curve $\tilde{\mathcal
Z}^{i,j,k}_{c_1,c_2}$ is given by~\eqref{delta-set}.
Let $B$ be a~matrix defined in the first part of the proof.
A direct computation shows that ${\mathcal Z}$ is projected to ${\mathcal X}$ by the parallel
projection $[P]=[A][\stpa][B]$.
\end{proof}

\begin{remark}[$\ap$ is a~homogeneous space] The map ${\Psi}:\left(\affp\right)\times\ap\to\ap$
defined by~\eqref{product-action1} for $[P]\in\ap$ and $([A],[B])\in\affp$ is an action of the
product group $\affp$ on the set of parallel projections~$\ap$.
Decomposition~\eqref{eq-pp-decomp} shows that this action is transitive.
The stabilizer of the orthogonal projection $\stpa$ is a~$10$-dimensional group
\begin{gather*}%\label{Ha}
H_{\mathcal P}^0= \left\{ \left( \left[
\begin{matrix}m_{11}&m_{12}&a_1
\\
m_{21}&m_{22}&a_2
\\
0&0&1
\end{matrix}
\right]
\!, \left.\left[
\begin{matrix}m_{11}&m_{12}&0&a_1
\\
m_{21}&m_{22}&0&a_2
\\
m_{31}&m_{32}&m_{33}&a_3
\\
0&0&0&1
\end{matrix}
 \right]\right)\right|
 m_{33} (m_{11}m_{22}-m_{12}m_{21})\neq0\right\} .
\end{gather*}
The set of central projections $\ap$ is, therefore, diffeomorphic to the homogeneous space
\[
\affp/H_{\mathcal P}^0.
\]
\end{remark}

The families of curves $\tilde{\mathcal Z}^{i,j,k}_{c_1,c_2}$ given by~\eqref{delta-set} have
a~large overlap.
The following corollary eli\-mi\-nates this redundancy and, therefore, is useful for practical
computations.

\begin{corollary}[reduced parallel projection criterion]
\label{reduced-aff-camera}
A curve ${\mathcal Z}\subset\mathbb{R}^3$ projects to ${\mathcal X}\subset\mathbb{R}^2$ by
a~parallel projection if and only if there exist $a_1,a_2,b\in\mathbb{R}$ such that the curve
${\mathcal X}$ is ${\mathcal A}(2)$-equivalent to one of the following planar curves:
\begin{gather}
\nonumber
\tilde{\mathcal Z}_{a_1,a_2}
=\overline{\left\{\left(z_1+a_1 z_3, z_2+a_2 z_3\right)\,\big|\,(z_1,z_2,z_3)\in{\mathcal Z}\right\}},
\\
\tilde{\mathcal Z}_b
=\overline{\left\{\left(z_1+b z_2, z_3\right)\,\big|\,(z_1,z_2,z_3)\in{\mathcal Z}\right\}},
\qquad
\tilde{\mathcal Z}
=\overline{\left\{\left(z_2, z_3\right)\,\big|\,(z_1,z_2,z_3)\in{\mathcal Z}\right\}}.\label{tosetcf}
\end{gather}
\end{corollary}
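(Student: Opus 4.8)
The plan is to deduce the corollary from the parallel projection criterion (Theorem~\ref{main-affine-camera}) by showing that the six two-parameter families $\tilde{\mathcal Z}^{i,j,k}_{c_1,c_2}$, $(i,j,k)\in\{(1,2,3),(1,3,2),(2,3,1)\}$, collapse, modulo $\mathcal A(2)$-equivalence, onto the three families $\tilde{\mathcal Z}_{a_1,a_2}$, $\tilde{\mathcal Z}_{b}$, $\tilde{\mathcal Z}$. I would use throughout that an element of $\mathcal A(2)$ acts on the image plane $\mathbb{R}^2$ as an invertible affine map and extends to a projective automorphism fixing the line at infinity; hence it is a Zariski homeomorphism commuting with Zariski closure, so the whole reduction can be carried out by applying honest affine substitutions to the defining parametrizations.

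For the ($\Leftarrow$) implication I would simply observe that each of the three listed curves is already a special member of the families in Theorem~\ref{main-affine-camera}: namely $\tilde{\mathcal Z}_{a_1,a_2}=\tilde{\mathcal Z}^{1,2,3}_{a_1,a_2}$, $\tilde{\mathcal Z}_{b}=\tilde{\mathcal Z}^{1,3,2}_{b,0}$, and $\tilde{\mathcal Z}=\tilde{\mathcal Z}^{2,3,1}_{0,0}$. Thus $\mathcal A(2)$-equivalence of $\mathcal X$ to any of them yields, via Theorem~\ref{main-affine-camera}, a parallel projection of $\mathcal Z$ onto $\mathcal X$.

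For the ($\Rightarrow$) implication I would invoke Theorem~\ref{main-affine-camera} to obtain a triplet $(i,j,k)$ and $c_1,c_2\in\mathbb{R}$ with $\mathcal X$ being $\mathcal A(2)$-equivalent to $\tilde{\mathcal Z}^{i,j,k}_{c_1,c_2}$, and then reduce each such curve to one of the three listed ones. The key point is that in the parametrization $(z_i+c_1z_k,\,z_j+c_2z_k)$ the variable $z_k$ occurs in both slots: when $c_2\neq0$, rescaling the second slot to normalize the coefficient of $z_k$ to $1$ and then subtracting $c_1$ times the second slot from the first eliminates $z_k$ from the first slot, converting the pattern $(z_i+{*}z_k,\,z_j+{*}z_k)$ into $(z_i+{*}z_j,\,z_k+{*}z_j)$ --- effectively interchanging the ``repeated'' variable with the other primary one. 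Running the case analysis then gives: $(1,2,3)$ is literally $\tilde{\mathcal Z}_{c_1,c_2}$; $(1,3,2)$ equals $\tilde{\mathcal Z}_{c_1}$ when $c_2=0$ and, after the normalization, $\tilde{\mathcal Z}_{-c_1/c_2,\,1/c_2}$ when $c_2\neq0$; $(2,3,1)$ equals $\tilde{\mathcal Z}$ when $c_1=c_2=0$, reduces to a $\tilde{\mathcal Z}_{b}$ (if $c_1\neq0$, $c_2=0$) or to a curve $\tilde{\mathcal Z}_{a_1,a_2}$ with $a_2=0$ (if $c_1=0$, $c_2\neq0$) by a single rescaling or coordinate swap, and reduces to a $\tilde{\mathcal Z}_{a_1,a_2}$ (when $c_1,c_2$ are both nonzero) by first clearing $z_1$ from the second slot and then applying the same swap. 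Checking that every substitution used is invertible, hence lies in $\mathcal A(2)$, then completes the argument.

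The only step requiring real care --- not a genuine obstacle --- will be the bookkeeping in this last case analysis: tracking all degenerate positions of $(c_1,c_2)$ and confirming that all three families are genuinely needed (the configuration $(i,j,k)=(1,3,2)$, $c_2=0$, $c_1\neq0$ forces the $\tilde{\mathcal Z}_{b}$ family, while $(i,j,k)=(2,3,1)$, $c_1=c_2=0$ forces $\tilde{\mathcal Z}$). As an alternative, the forward direction can be obtained directly from the proof of Theorem~\ref{main-affine-camera} by splitting according to whether the first two columns of the left $2\times3$ block of $P$ are linearly independent, are linearly dependent with nonzero first column, or have vanishing first column --- these three cases producing precisely $\tilde{\mathcal Z}_{a_1,a_2}$, $\tilde{\mathcal Z}_{b}$, and $\tilde{\mathcal Z}$.
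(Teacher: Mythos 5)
Your proposal is correct and follows essentially the same route as the paper: invoke Theorem~\ref{main-affine-camera} and then run the case analysis on $(i,j,k)$ and on the vanishing of $c_1,c_2$, using explicit invertible $2\times2$ substitutions (normalize the coefficient of the repeated variable $z_k$, eliminate it from the other slot) to collapse each $\tilde{\mathcal Z}^{i,j,k}_{c_1,c_2}$ onto one of $\tilde{\mathcal Z}_{a_1,a_2}$, $\tilde{\mathcal Z}_b$, $\tilde{\mathcal Z}$, with the resulting parameter values matching the paper's ($a_1=-c_1/c_2$, $a_2=1/c_2$ for $(1,3,2)$; $a_1=1/c_2$, $a_2=-c_1/c_2$ for $(2,3,1)$; $b=1/c_1$ in the degenerate subcase). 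Your converse direction, identifying each listed curve directly as a member of the family~\eqref{delta-set}, is a slightly cleaner phrasing of the paper's ``reverse the argument'' step, but it is the same argument.
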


\begin{proof}
We first prove that for any permutation $(i,j,k)$ of numbers $(1,2,3)$ such that $i<j$, and for
any $c_1,c_2\in\mathbb{R}$ the set $\tilde{\mathcal
Z}^{i,j,k}_{c_1,c_2}=\left\{\left(z_i+c_1 z_k, z_j+c_2 z_k\right)\,\big|\,(z_1,z_2,z_3)\in{\mathcal Z}\right\}$
is ${\mathcal A}(2)$-equivalent to one of the sets listed in~\eqref{tosetcf}.

Obviously, $\tilde{\mathcal Z}^{1,2,3}_{c_1,c_2}=\tilde{\mathcal Z}_{a_1,a_2}$ with $a_1=c_1$ and
$a_2=c_2$.

For $\tilde{\mathcal Z}^{1,3,2}_{c_1,c_2}$, if $c_2\neq0$ then $\left(
\begin{matrix}1&-\frac{c_1}{c_2}
\\
0&\frac{1}{c_2}
\end{matrix}
\right)\left(
\begin{matrix}z_1+{c_1}z_2
\\
z_3+c_2{z_2}
\end{matrix}
\right)=\left(
\begin{matrix}
z_1-\frac{c_1}{c_2}z_3
\\
z_2+\frac1{c_2}{z_3}
\end{matrix}
\right)$ and so $\tilde{\mathcal Z}^{1,3,2}_{c_1,c_2}$ is ${\mathcal A}(2)$-equivalent to
$\tilde{\mathcal Z}_{a_1,a_2}$ with $a_1=-\frac{c_1}{c_2}$ and $a_2=\frac1{c_2}$.
Otherwise, if $c_2=0$, the $\tilde{\mathcal Z}^{1,3,2}_{c_1,c_2}=\tilde{\mathcal Z}_b$ with $b=c_1$.

Similarly for $\tilde{\mathcal Z}^{2,3,1}_{c_1,c_2}$, if $c_2\neq0$ then $\tilde{\mathcal
Z}^{2,3,1}_{c_1,c_2}$ is ${\mathcal A}(2)$-equivalent to $\tilde{\mathcal Z}_{a_1,a_2}$ with
$a_1=\frac{1}{c_2}$ and $a_2=-\frac{c_1}{c_2}$.
Otherwise, if $c_2=0$, then $\tilde{\mathcal Z}^{2,3,1}_{c_1,c_2}=(z_2+c_1z_1,
z_3)$.
If $c_1\neq0$ then $\tilde{\mathcal Z}^{2,3,1}_{c_1,c_2}$ is ${\mathcal A}(2)$-equivalent to
$\tilde{\mathcal Z}_b$ with $b=\frac1{c_1}$, otherwise $c_1=0$ and $\tilde{\mathcal
Z}^{2,3,1}_{c_1,c_2}=\tilde{\mathcal Z}$.

We can reverse the argument and show that any curve given by~\eqref{tosetcf} is ${\mathcal
A}(2)$-equivalent to a~curve from family~\eqref{delta-set}.
Then the reduced criteria follows from Theorem~\ref{main-affine-camera}.
\end{proof}

We note that the map
\begin{gather}
\label{eq-canp}
x={z_1+a_1 z_3},
\qquad
y=z_2+a_2 z_3
\end{gather}
is a~parallel projection onto the $z_1,z_2$-coordinate plane in the direction of the vector
$(-a_1,-a_2,1)$ with coordinates on the image plane induced from $\mathbb{R}^3$, namely, $x=z_1$
and $y=z_2$.
We call~\eqref{eq-canp} \emph{the canonical projection in the direction $(-a_1,-a_2,1)$}.
The map $x={z_1+b\,z_2}$,
$y=z_3$ is a~projection onto the $z_1,z_3$-coordinate plane in the direction of the vector
$(-b,1,0)$ with coordinates on the image plane induced from $\mathbb{R}^3$, namely, $x=z_1$ and
$y=z_3$, and finally the map $x=z_2$, $y=z_3$ is the orthogonal projection onto the $z_2,z_3$-plane.

\section{Solving the group-equivalence problem}\label{group-equiv}

Theorems~\ref{main-finite-camera} and~\ref{main-affine-camera} reduce the projection problem
to the problem of establishing group-action equivalence between a~given curve and a~curve from
a~certain family.
In this section, we give a~solution of the group-equivalence problem for planar algebraic curves.
In Section~\ref{ssec-inv}, we consider a~rational action of an \emph{arbitrary} algebraic group on~$\mathbb{R}^2$ and define a~notion of a~\emph{classifying set of rational differential
invariants}.
In Section~\ref{ssec-sig}, we define a~notion of \emph{exceptional curves} with respect to
a~classifying set of invariants and define \emph{signatures} of non-exceptional curves.
We then prove that signatures characterize the equivalence classes of non-exceptional curves.
In Section~\ref{ssec-ap-inv}, we produce explicit formulae for classifying sets of {rational}
differential invariants for \emph{affine and projective groups}.
In Section~\ref{ssec-rat-sig}, we specialize our signature construction to \emph{rational algebraic
curves} and provide examples of solving group-equivalence problem for such curves.

We note that differential invariants have long been used for solving the group-equivalence
problem for smooth curves.
Classical differential invariants were obtained with the moving frame method~\cite{C37}, which
most often produces \emph{non-rational} invariants.
Signatures based on classical differential invariants were introduced in~\cite{calabi98}.
For smooth curves, the equality of signatures of two curves implies that there are segments of two
curves that are group-equivalent (in other words, these curves are \emph{locally} equivalent), but
the entire curves may be non-equivalent.
This is well illustrated in~\cite{musso09}.
In a~recent work~\cite{ho12}, a~significantly more involved notion of the extended signature was
introduced to solve global equivalence problem for smooth curves.

The rigidity of irreducible algebraic curves allows us to use simpler signatures to establish
\emph{global} equivalence.
Rationality of the invariants as well as explicit characterization of exceptional curves allows us
to solve equivalence problem using standard computational algebra algorithms.

\subsection{Definition of a~classifying set of rational differential invariants}
\label{ssec-inv}

A rational action of an algebraic group $G$ on $\mathbb{R}^2$ can be prolonged to an action on the
$n$-th jet space $J^n=\mathbb{R}^{n+2}$ with coordinates $(x,y,y^{(1)},\dots,y^{(n)})$ as
follows\footnote{Here $y=y^{(0)}$ and $J^0=\mathbb{R}^2$.}. For a~fixed $g\in G$, let $(\bar
x,\bar y)=g\cdot(x,y)$.
Then $\bar x$, $\bar y$ are rational functions of $(x,y)$ and
\begin{gather}
g\cdot\big(x,y,y^{(1)},\dots,y^{(n)}\big)=
\big(\bar x,\bar y,\bar y^{(1)},\dots,\bar y^{(n)}\big),\label{eq-pra}
\end{gather}
where
\begin{gather*}
\bar y^{(1)}=
\frac{\frac{d}{dx} \big[\bar y(x,y)\big]}{\frac{d}{dx}\,\big[\bar x(x,y)\big]}\!\qquad \text{and for}\!\quad k=
1,\dots,{n-1}
\!\qquad
\bar y^{(k+1)}=
\frac{\frac{d}{dx} \left[\bar y^{(k)}(x,y,y^{(1)},\dots,y^{(k)})\right]}{\frac{d}{dx}\left[\bar x(x,y)\right]}.
\end{gather*}
Here $\frac{d}{dx}$ is the total derivative, applied under assumption that $y$ is
function of~$x$.\footnote{We note the duality of our view of variables $y^{(k)}$.
On one hand, they are viewed as independent coordinate functions on $J^n$.
On the other hand, operator $\frac{d}{dx}$ is applied under assumption that $y$ is a~function of
$x$ and, therefore, $y^{(k)}$~is also viewed as the $k$-th derivative of~$y$ with respect to~$x$.
(The same duality of view appears in calculus of variations.)} We note that a~natural projection
$\pi^n_k\colon J^n\to J^k$, $k<n$ is equivariant with respect to action~\eqref{eq-pra}.
For general theory of rational actions see~\cite{vinberg89} and for general definitions and
properties of the jet bundle and prolongations of actions see~\cite{olver:yellow}.

\begin{definition} A function on~$J^n$ is called a~{\em differential function}.
The {\em order of a~differential function} is the maximum value of $k$ such that the function
explicitly depends on the variable~$y^{(k)}$.
A differential function which is invariant under action~\eqref{eq-pra} is called a~{\em differential invariant}.
\end{definition}

\begin{remark} Due to equivariant property of the projection $\pi^n_k\colon J^n\to J^k$, $k<n$,
a~differential invariant of order $k$ on $J^k$ can be viewed as a~differential invariant on
$J^n$ for all $n\geq k$.
\end{remark}

\begin{definition}%\label{def-sep}
Let $G$ act on $\mathbb{R}^N$.
We say that a~set $\mathcal I$ of invariant rational functions on $\mathbb{R}^N$ {\em separates}
orbits on a~subset $W\subset\mathbb{R}^N$ if $W$ is contained in the domain of definition of each
$I\in\mathcal I$ and $\forall\, w_1,w_2\in W$
\begin{gather*}
I(w_1)=I(w_2) \quad \forall\, I\in\mathcal I\ \Longleftrightarrow\ \exists\, g\in G\text{ such that }w_1=
g\cdot w_2.
\end{gather*}
\end{definition}

 \begin{definition}[classifying set of rational differential invariants]
\label{def-dsep}
Let $r$-dimensional algebraic group $G$ act on $\mathbb{R}^2$.
Let $K$ and $T$ be \emph{rational differential invariants} of orders ${r-1}$ and $r$,
respectively.
The set ${\mathcal I}=\{K,T\}$ is called {\em classifying } if $K$ separates orbits on a~Zariski
open subset $W^{r-1}\subset J^{r-1}$ and ${\mathcal I}=\{K,T\}$ separates orbits on a~Zariski open
subset $W^r\subset J^{r}$.
\end{definition}

\subsection{Jets of curves and signatures}
\label{ssec-sig}
In this section, we assume that ${\mathcal X}\subset\mathbb{R}^2$ is an irreducible algebraic
curve, different from a~vertical line.
Let $F(x,y)$ be an irreducible polynomial, whose zero set equals to~${\mathcal X}$.
Then the derivatives of~$y$ with respect to~$x$ are rational functions on~${\mathcal X}$, whose
explicit formulae are obtained by implicit differentiation
\begin{gather*}%\label{eq-impd}
y^{(1)}_{{\mathcal X}}=-\frac{F_x}{F_y},
\qquad
y^{(2)}_{{\mathcal X}}=\frac{-F_{xx}\,F_y^2+2\,F_{xy}\,F_x\,F_y-F_{yy}\,F_x^2}{F_y^3},
\qquad
\dots.
\end{gather*}

\begin{definition}
\label{def-cjet}
The $n$-th jet of a~curve ${\mathcal X}\subset\mathbb{R}^2$ is a~rational map $j^n_{{\mathcal
X}}\colon{\mathcal X}-\to J^n$, where for ${\mathbf p}\in{\mathcal X}$
\begin{gather*}
j^n_{{\mathcal X}}({\mathbf p})=
\big(x({\mathbf p}), y({\mathbf p}),
y^{(1)}_{{\mathcal X}}({\mathbf p}),\dots,y^{(n)}_{{\mathcal X}}({\mathbf p})\big).
\end{gather*}
\end{definition}
From the definition of the prolonged action~\eqref{eq-pra}, it follows that for
all $g\in G$ and ${\mathbf p}\in{\mathcal X}$ the following equality holds, whenever both sides
are defined.
\begin{gather}
\label{eq-jg}
j^n_{g\cdot{\mathcal X}}(g\cdot{\mathbf p})=g\cdot\big[j^n_{\mathcal X}({\mathbf p})\big].
\end{gather}

\begin{definition}
\label{def-restr}
A \emph{restriction of a~rational differential function} $\Phi\colon J^n-\to\mathbb{R}$ to
a~curve ${\mathcal X}$ is a~composition of $\Phi$ with the $n$-th jet of curve, i.e.\
$\Phi|_{{\mathcal X}}=\Phi\circ j^n_{{\mathcal X}}$.
If defined, such composition produces a~rational function ${\mathcal X}-\to\mathbb{R}$.
\end{definition}

\begin{definition}
\label{def-excep}
Let $\mathcal I=\{K,T\}$ be a~{classifying set of rational differential invariants} for
$G$-action (see Definition~\ref{def-dsep}).
Then a~point ${\mathbf p}\in{\mathcal X}$ is called \emph{${\mathcal I}$-regular} if:
\begin{enumerate} \itemsep=0pt
\item[(1)] ${\mathbf p}$ is a~non-singular point of ${\mathcal X}$;
\item[(2)]
$j^{r-1}_{{\mathcal X}}({\mathbf p})\in W^{r-1}$ and $j^{r}_{{\mathcal X}}({\mathbf p})\in W^{r}$;
\item[(3)] $\left.\frac{\partial K}{\partial y^{(r-1)}}\right|_{j^{r-1}_{{\mathcal X}}({\mathbf
p})}\neq0$ and $\left.\frac{\partial T}{\partial y^{(r)}}\right|_{j^{r}_{{\mathcal X}}({\mathbf
p})}\neq0$.
\end{enumerate}
An algebraic curve ${\mathcal X}\subset\mathbb{R}^2$ is called {\em
non-exceptional} with respect to~${\mathcal I}$ if all but a~finite number of its points are
${\mathcal I}$-regular.
\end{definition}

\begin{lemma}
\label{lemma-sigv}
Let $\mathcal I=\{K,T\}$ be a~{classifying set of rational differential invariants} $($see
Definition~{\rm \ref{def-dsep})}.
Let ${\mathcal X}\subset\mathbb{R}^2$ be a~non $\mathcal I$-exceptional curve defined by an
irreducible implicit equation $F(x,y)=0$.
Then \begin{enumerate}\itemsep=0pt
 \item[$(1)$] $K|_{\mathcal X}$ and $T|_{\mathcal X}$ are rational functions on
${\mathcal X}$ and therefore there exist polynomials $k_1,k_2\in\mathbb{R}[x,y]$ with no
non-constant common factors modulo $F$, and polynomials ${t_1},{t_2}\in\mathbb{R}[x,y]$ with no
non-constant common factors modulo $F$, such that
\begin{gather}
\label{eq-KP}
K|_{\mathcal X}(x,y)=\frac{k_1(x,y)}{k_2(x,y)}\qquad \text{and} \qquad T(x,y)|_{\mathcal X}=
\frac{t_1(x,y)}{t_2(x,y)}.
\end{gather}
\item[$(2)$] The Zariski closure $\overline{{\mathcal S}_{{\mathcal X}}}$ of the image of
the rational map ${S}|_{\mathcal X}\colon{\mathcal X}-\to\mathbb{R}^2$, defined by
${S}|_{\mathcal X}({\mathbf p})=(K|_{{\mathcal X}}({\mathbf p}), T|_{{\mathcal X}}({\mathbf p}))$
for ${\mathbf p}\in{\mathcal X}$, is the variety of the elimination ideal $\hat
X=X\cap\mathbb{R}[\varkappa,\tau]$, where
\begin{gather}
\label{eq-X}
X:=
\langle F,k_2 \varkappa-k_1, t_2 \tau-t_1, k_2 t_2 \sigma-1\rangle\subset\mathbb{R}[\varkappa,\tau,x,y,\sigma].
\end{gather}
\item[$(3)$] $\dim\overline{{\mathcal S}_{{\mathcal X}}}=0$ if and only if $K_{\mathcal X}$ and
$T_{\mathcal X}$ are constant functions on ${\mathcal X}$ and $\dim\overline{{\mathcal
S}_{{\mathcal X}}}=1$ otherwise.
In the latter case, $\overline{{\mathcal S}_{{\mathcal X}}}$ is an irreducible algebraic planar
curve, i.e.\ a~zero set of an irreducible polynomial ${\hat S}_{{\mathcal X}}(\varkappa,\tau)$.
\end{enumerate}
\end{lemma}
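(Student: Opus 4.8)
The plan is to establish the three parts in order, since each one builds on the previous. For part $(1)$, I would argue that $K$ and $T$ are rational differential functions by hypothesis (they are rational invariants on $J^{r-1}$ and $J^r$), and that the $n$-th jet map $j^n_{\mathcal X}$ has components that are rational functions on ${\mathcal X}$ obtained by implicit differentiation, as recalled just before Definition~\ref{def-cjet}. Hence the restrictions $K|_{\mathcal X}=K\circ j^{r-1}_{\mathcal X}$ and $T|_{\mathcal X}=T\circ j^r_{\mathcal X}$ are rational functions on the irreducible curve ${\mathcal X}$ — the key point being that because ${\mathcal X}$ is non-exceptional, ${\mathcal I}$-regular points form a dense (Zariski open) subset, so these compositions are not identically undefined, i.e.\ they are genuine elements of the function field $\mathbb{R}[x,y]/(F)$ (which is a domain since $F$ is irreducible). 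Writing each as a quotient of polynomials and clearing any common factor modulo $F$ gives the representations~\eqref{eq-KP}; the normalization "no non-constant common factors modulo $F$" just means $k_1,k_2$ are coprime in the coordinate ring, which is possible since that ring — being the integral closure issues aside — allows us to reduce fractions.

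For part $(2)$, the strategy is the standard Rabinowitsch trick for computing the image (closure) of a rational map. The map $S|_{\mathcal X}\colon {\mathcal X}\dashrightarrow\mathbb{R}^2$ sends ${\mathbf p}\mapsto(\varkappa,\tau)=(k_1/k_2,\,t_1/t_2)$ evaluated at ${\mathbf p}$. Its graph inside $\{(\varkappa,\tau,x,y)\}$ is cut out by $F(x,y)=0$, $k_2\varkappa-k_1=0$, $t_2\tau-t_1=0$, \emph{together with} the condition that we stay on the locus where $k_2\neq 0$ and $t_2\neq 0$; the auxiliary variable $\sigma$ with the relation $k_2 t_2\sigma-1=0$ enforces exactly $k_2 t_2\neq 0$. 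Thus $X$ as defined in~\eqref{eq-X} is the ideal of the graph of $S|_{\mathcal X}$ over the open subset of ${\mathcal X}$ where the map is defined, and eliminating the curve variables — that is, intersecting with $\mathbb{R}[\varkappa,\tau]$ — yields, by the closure theorem for elimination ideals (see~\cite{CLO96}), precisely the defining ideal of the Zariski closure of the image, which is $\overline{{\mathcal S}_{\mathcal X}}$. I would cite the closure theorem rather than reprove it. One point to be careful about: the closure theorem over $\mathbb{R}$ gives $V(\hat X)\supseteq\overline{{\mathcal S}_{\mathcal X}}$ with equality of Zariski closures, which is exactly the stated claim; I should phrase it as an equality of \emph{varieties} in the Zariski topology, matching the paper's convention.

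For part $(3)$, the dichotomy is dimension-theoretic. Since ${\mathcal X}$ is irreducible of dimension $1$, the image closure $\overline{{\mathcal S}_{\mathcal X}}$ is an irreducible variety of dimension $0$ or $1$ — it is the closure of the image of an irreducible variety under a morphism (on the open locus of definition), hence irreducible, and its dimension cannot exceed $\dim{\mathcal X}=1$. Dimension $0$ means $\overline{{\mathcal S}_{\mathcal X}}$ is a single point, which happens exactly when both coordinate functions $K|_{\mathcal X}$ and $T|_{\mathcal X}$ are constant on ${\mathcal X}$ (a rational function on an irreducible curve is constant iff it takes a single value on a dense set); otherwise the dimension is $1$ and irreducibility forces $\overline{{\mathcal S}_{\mathcal X}}$ to be the zero set of a single irreducible polynomial $\hat S_{\mathcal X}(\varkappa,\tau)$ — here I would invoke that an irreducible $1$-dimensional variety in $\mathbb{R}^2$ (equivalently, a height-$1$ prime in $\mathbb{R}[\varkappa,\tau]$, which is a UFD) is principal, generated by an irreducible polynomial. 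The main obstacle, and the place I would spend the most care, is part $(2)$: making precise that the Rabinowitsch ideal $X$ really captures the graph over the \emph{locus of definition} of $S|_{\mathcal X}$ (not some larger or smaller set), and that over $\mathbb{R}$ the elimination/closure theorem applies as stated — one has to be slightly delicate because the real Zariski topology is coarser than the complex one, but since all the statements are about Zariski closures of constructible sets defined over $\mathbb{R}$, the algebraic elimination argument goes through verbatim.
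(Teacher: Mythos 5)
Your proposal follows essentially the same route as the paper's own proof: part $(1)$ via the composition of the rational jet map with the rational invariants (with non-exceptionality guaranteeing the composition is defined on a cofinite set), part $(2)$ via the elimination/closure theorem applied to the ideal $X$ with the auxiliary variable $\sigma$ enforcing $k_2t_2\neq 0$, and part $(3)$ via irreducibility and dimension of the closure of the image of an irreducible variety under a rational map. The only differences are cosmetic: you spell out the Rabinowitsch role of $\sigma$ and the UFD argument for principality, and you explicitly flag the real-versus-complex subtlety in the closure theorem that the paper passes over silently.
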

\begin{proof}
(1)~A~function $K|_{\mathcal X}\colon{\mathcal X}-\to\mathbb{R}$ is
a~composition of rational maps
\begin{gather*}
j^{r-1}_{{\mathcal X}}\colon \ {\mathcal X}-\to J^{r-1}\qquad \text{and} \qquad K\colon J^{r-1}-\to\mathbb{R}
\end{gather*}
(see Definitions~\ref{def-cjet} and~\ref{def-restr}).
Since ${\mathcal X}$ is non-exceptional this composition is defined for all but finite number
of points on ${\mathcal X}$ and therefore $K|_{\mathcal X}$ is a~rational function.
The same argument shows that~$T|_{\mathcal X}$ is a~rational function.
Since ${\mathcal X}$ is defined by $F(x,y)=0$, where $F$ is irreducible, there exist polynomials
$k_1,k_2\in\mathbb{R}[x,y]$ with no non-constant common factors modulo $F$, and polynomials
${t_1},{t_2}\in\mathbb{R}[x,y]$ with no non-constant common factors modulo $F$, such
that~\eqref{eq-KP} holds.

(2)~By definition,
\begin{gather*}
{\mathcal S}_{\mathcal X}=\left\{(\kappa,\tau)\, |\, \exists\, (x,y)
\
F(x,y)=0\,\wedge\, \varkappa=\frac{k_1(x,y)}{k_2(x,y)}\,\wedge\,\tau=\frac{t_1(x,y)}{t_2(x,y)}\right\}
\end{gather*}
and therefore is the projection of the variety defined by~$X$, given by~\eqref{eq-X}, to the
$\varkappa,\tau$-plane.
It is the standard theorem in the computational algebraic geometry (see, for
instance,~\cite[Chapter~5]{CLO96}) that the Zariski closure $\overline{{\mathcal S}_{\mathcal X}}$
of this projection is the variety of the elimination ideal $X\cap\mathbb{R}[\varkappa,\tau]$.

(3)~It is not difficult to prove, in general, that the Zariski closure of the image of
an irreducible variety under a~rational map is an irreducible variety.
The dimension of this closure is less or equal than the dimension of the original variety.
Thus $\overline{{\mathcal S}_{\mathcal X}}$ is an irreducible variety of dimension zero when the
signature map ${S}|_{\mathcal X}=(K|_{\mathcal X},T|_{\mathcal X})$ is a~constant map and of
dimension one otherwise.
In the latter case $\overline{{\mathcal S}_{\mathcal X}}$ is an irreducible algebraic planar curve
and, therefore, is a~zero set of a~single irreducible polynomial.
\end{proof}

\begin{definition}
\label{def-sig}
Let ${\mathcal I}=\{K,T\}$ be a~classifying set of rational differential invariants with respect
to $G$-action and ${\mathcal X}$ be non-exceptional with respect to ${\mathcal I}$.
\begin{enumerate}\itemsep=0pt
 \item[(1)] The rational map ${S}|_{\mathcal X}\colon{\mathcal X}-\to\mathbb{R}^2$
defined by ${S}|_{\mathcal X}({\mathbf p})=(K|_{{\mathcal X}}({\mathbf p}), T|_{{\mathcal
X}}({\mathbf p}))$ for ${\mathbf p}\in{\mathcal X}$ is called the \emph{signature map}.
\item[(2)] The image of ${S}|_{\mathcal X}$ is called the \emph{signature} of ${\mathcal X}$ and is
denoted by ${\mathcal S}_{{\mathcal X}}$.
\end{enumerate}
\end{definition}
\begin{theorem}[group-equivalence criterion]
\label{th-main-sig}
Assume that irreducible algebraic curves ${\mathcal X}_1$ and ${\mathcal X}_2$ are non-exceptional
with respect to a~classifying set of rational differential invariants ${\mathcal I}=(K,T)$ under
the $G$-action.
Then ${\mathcal X}_1$ and ${\mathcal X}_2$ are $G$-equivalent if and only if their signatures are
equal:
\begin{gather*}%\label{eq-th-sig}
{{\mathcal X}_1}\underset G{\cong}{{\mathcal X}_2}
\quad
\Longleftrightarrow
\quad
{\mathcal S}_{{\mathcal X}_1}={\mathcal S}_{{\mathcal X}_2}.
\end{gather*}
\end{theorem}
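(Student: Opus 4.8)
The plan is to prove the two implications separately. The forward direction ($\Rightarrow$) is the easy one: suppose ${\mathcal X}_2 = A\cdot{\mathcal X}_1$ for some $[A]\in G$. Since $K$ and $T$ are differential invariants, equation~\eqref{eq-jg} gives $j^n_{g\cdot{\mathcal X}_1}(g\cdot{\mathbf p}) = g\cdot[j^n_{{\mathcal X}_1}({\mathbf p})]$ whenever both sides are defined, and hence $K|_{{\mathcal X}_2}(g\cdot{\mathbf p}) = K|_{{\mathcal X}_1}({\mathbf p})$ and likewise for $T$, for all ${\mathbf p}$ in a Zariski-dense subset of ${\mathcal X}_1$. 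Therefore ${S}|_{{\mathcal X}_2}\circ g$ and ${S}|_{{\mathcal X}_1}$ agree on a dense set, so their images coincide as subsets of $\mathbb{R}^2$ up to sets of lower dimension; taking Zariski closures and invoking Lemma~\ref{lemma-sigv}(3) (each closure is either a point or an irreducible curve, determined by the rational map) forces ${\mathcal S}_{{\mathcal X}_1} = {\mathcal S}_{{\mathcal X}_2}$. One has to be a little careful here because the signature is defined as the image, not its closure, but since the two rational signature maps are ``intertwined'' by $g$ and $g$ is a bijection on a dense open set, the images agree exactly except possibly at finitely many exceptional points, and by hypothesis both curves are non-exceptional so those points are irrelevant.

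The converse ($\Leftarrow$) is the substantive direction. Suppose ${\mathcal S}_{{\mathcal X}_1} = {\mathcal S}_{{\mathcal X}_2}$. The idea is to reconstruct a group element from a generic fiber of the signature map. Pick an ${\mathcal I}$-regular point ${\mathbf p}_1\in{\mathcal X}_1$ whose signature value $s = {S}|_{{\mathcal X}_1}({\mathbf p}_1)$ also lies in the image of ${S}|_{{\mathcal X}_2}$; by the equality of signatures such points are dense in ${\mathcal X}_1$. Choose ${\mathbf p}_2\in{\mathcal X}_2$, also ${\mathcal I}$-regular, with ${S}|_{{\mathcal X}_2}({\mathbf p}_2) = s$. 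Then $j^{r-1}_{{\mathcal X}_1}({\mathbf p}_1)$ and $j^{r-1}_{{\mathcal X}_2}({\mathbf p}_2)$ lie in $W^{r-1}$ and have the same value of $K$, so because $K$ separates orbits on $W^{r-1}$ there is a $g\in G$ with $g\cdot j^{r-1}_{{\mathcal X}_1}({\mathbf p}_1) = j^{r-1}_{{\mathcal X}_2}({\mathbf p}_2)$. We must upgrade this to an equality of full $r$-jets: here we use that $T$ has order exactly $r$, that the $r$-jets lie in $W^r$ where $\{K,T\}$ separates orbits, and that both points have the same value of $T$. The subtlety is that $g$ was only determined on the $(r-1)$-jet; the stabilizer of a generic $(r-1)$-jet in $G$ is positive-dimensional in general (dimension $\geq 1$ since $\dim G = r$ and a generic orbit in $J^{r-1}$ has dimension $r-1$), and we need the matching of the $T$-value to pin down the remaining freedom. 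The non-degeneracy condition~(3) in Definition~\ref{def-excep}, namely $\partial T/\partial y^{(r)}\neq 0$, guarantees that $T$ actually varies along the fiber of $\pi^r_{r-1}$ through our jet, so that the coset of $g$ modulo the $(r-1)$-jet stabilizer is narrowed down to a finite set, and combined with ``$\{K,T\}$ separates orbits on $W^r$'' we conclude $g\cdot j^{r}_{{\mathcal X}_1}({\mathbf p}_1) = j^{r}_{{\mathcal X}_2}({\mathbf p}_2)$ for a suitable choice of $g$.

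Once we have a single $g\in G$ and a single pair of regular points with $g\cdot j^r_{{\mathcal X}_1}({\mathbf p}_1) = j^r_{{\mathcal X}_2}({\mathbf p}_2)$, the final step is a rigidity/analytic-continuation argument: the curve $g\cdot{\mathcal X}_1$ and the curve ${\mathcal X}_2$ are both irreducible algebraic curves passing through ${\mathbf p}_2$ with the same $r$-jet there. Because a $G$-translate of an irreducible algebraic curve is again an irreducible algebraic curve, and two irreducible algebraic curves that share sufficiently many derivatives at a smooth common point must coincide (agreement of the $r$-jet at one smooth point, together with the fact that each is the zero locus of a single irreducible polynomial, propagates: locally the two curves are graphs $y = \phi(x)$ with $\phi$ satisfying the same algebraic relation and the same initial conditions, hence equal as analytic germs, hence equal Zariski closures), we get $g\cdot{\mathcal X}_1 = {\mathcal X}_2$, i.e.\ ${\mathcal X}_1\underset{G}{\cong}{\mathcal X}_2$. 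I expect the main obstacle to be exactly the middle step — transferring the orbit-matching from the $(r-1)$-jet up to the $r$-jet and controlling the residual stabilizer freedom — which is where the precise definitions of classifying set (Definition~\ref{def-dsep}) and ${\mathcal I}$-regular point (Definition~\ref{def-excep}), in particular the two transversality conditions on $\partial K/\partial y^{(r-1)}$ and $\partial T/\partial y^{(r)}$, have to be used in full force.
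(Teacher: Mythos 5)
Your forward direction and the step producing a group element $g$ with $g\cdot j^{r}_{{\mathcal X}_1}({\mathbf p}_1)=j^{r}_{{\mathcal X}_2}({\mathbf p}_2)$ are essentially fine, although the detour through the $(r-1)$-jet and the residual stabilizer is unnecessary: since both $r$-jets lie in $W^r$ and carry the same values of $K$ and $T$, the separation property of $\{K,T\}$ on $W^r$ hands you $g$ in one step. The genuine gap is your final ``rigidity'' step. You claim that two irreducible algebraic curves passing through a common smooth point with the same $r$-jet must coincide, on the grounds that the local analytic graphs ``satisfy the same algebraic relation and the same initial conditions.'' They do not satisfy the same algebraic relation: one is cut out by $F_1$ and the other by the defining polynomial of $g\cdot{\mathcal X}_2$, and agreement of finitely many derivatives at one point does not force two distinct irreducible curves to coincide --- for instance $y=0$ and $y=x^{r+1}$ agree to order $r$ at the origin. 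An $r$-jet match at a single point carries no global information by itself.

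What actually closes the argument in the paper is that both local branches are solutions of the \emph{same ordinary differential equation}, extracted from the common signature: in the non-constant case both curves satisfy $\hat S\big(K(x,y,\dots,y^{(r-1)}),T(x,y,\dots,y^{(r)})\big)=0$, where $\hat S$ is the irreducible implicit equation of the shared signature curve from Lemma~\ref{lemma-sigv}; the conditions $\partial\hat S/\partial\tau\neq 0$ at the chosen signature value and $\partial T/\partial y^{(r)}\neq 0$ at the jet allow one to solve for $y^{(r)}$ and invoke the uniqueness theorem for ODEs with the matching initial data, after which irreducibility of the algebraic curves upgrades local equality to global equality. This is also what forces the case split you omitted: when $K$ is constant the signature degenerates to a point, $\hat S$ cannot be solved for $\tau$, and one must instead use the order-$(r-1)$ equation $K=\varkappa_0$ together with $\partial K/\partial y^{(r-1)}\neq 0$. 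Your proposal never uses the signature equation as a differential equation, and that is the essential mechanism of the whole method.
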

\begin{remark}%\label{rem-sige}
Assume that two curves ${\mathcal X}_1$ and ${\mathcal X}_2$ have non-constant signature maps, and
so the closures of their signatures are zero sets of polynomials ${\hat S}_{{\mathcal
X}_1}(\varkappa,\tau)$ and ${\hat S}_{{\mathcal X}_2}(\varkappa,\tau)$, respectively.
The equality of signatures ${\mathcal S}_{{\mathcal X}_1}={\mathcal S}_{{\mathcal X}_2}$, implies
that ${\hat S}_{{\mathcal X}_1}(\varkappa,\tau)$ is equal up to a~constant multiple to~${\hat
S}_{{\mathcal X}_2}(\varkappa,\tau)$.
The converse is true over $\mathbb{C}$, but not over $\mathbb{R}$, because the latter is not an
algebraically closed field (see Example~\ref{ex-psig} below and~\cite{CLO96} for general results
on implicitization).
\end{remark}

\begin{proof}[Proof of Theorem~\ref{th-main-sig}.]
Direction $\Longrightarrow$ follows immediately from the
definition of invariants.
Below we prove $\Longleftarrow$.
We notice that there are two cases.
Either $K|_{{\mathcal X}_1}$ and $K|_{{\mathcal X}_2}$ are constant maps on~${{\mathcal X}_1}$ and~${{\mathcal X}_2}$, respectively, and these maps take the same value.
Otherwise both~$K|_{{\mathcal X}_1}$ and~$K|_{{\mathcal X}_2}$ are non-constant rational maps on~${{\mathcal X}_1}$ and~${{\mathcal X}_2}$, respectively.

\emph{Case 1:} There exists $\varkappa_0\in\mathbb{R}$ such that $K|_{{\mathcal X}_1}({\mathbf
p}_1)=\varkappa_0$ and $K_{{\mathcal X}_2}({\mathbf p}_2)=\varkappa_0$ for all ${\mathbf
p}_1\in{\mathcal X}_1$ and for all ${\mathbf p}_2\in{\mathcal X}_2$.
Since ${\mathcal X}_1$ and ${\mathcal X}_2$ are non-exceptional, we may fix
$\mathcal{I}_G$-regular points ${\mathbf p}_1=(x_1,y_1)\in{\mathcal X}_1$ and ${\mathbf
p}_2=(x_2,y_2)\in{\mathcal X}_2$.
Then, due to separation property of the invariant~$K$, $\exists\, g\in G$ such that
$j^{r-1}_{{\mathcal X}_1}({\mathbf p}_1)=g\cdot[j^{r-1}_{{\mathcal X}_2}({\mathbf p}_2)]$.
We consider a~new algebraic curve ${{\mathcal X}_3}=g\cdot{\mathcal X}_2$.
Then due to~\eqref{eq-jg}, we have
\begin{gather}
\label{j13}
j^{r-1}_{{\mathcal X}_1}({\mathbf p}_1)=j^{r-1}_{{{\mathcal X}_3}}({\mathbf p}_1)=:\jp{r-1}.
\end{gather}

Since ${\mathbf p}_1$ is a~${\mathcal I}$-regular point of ${\mathcal X}_1$, it follows
from~\eqref{j13} that it is also a~${\mathcal I}$-regular point of~${{\mathcal X}_3}$ and, in
particular, is non-singular.
Let $F_1(x,y)=0$ and $F_3(x,y)=0$ be implicit equations of~${\mathcal X}_1$ and~${{\mathcal X}_3}$,
respectively.
We may assume that $\left.\pd{F_1}{y}\right|_{{\mathbf p}_1}\neq0$ and
$\left.\pd{F_3}{y}\right|_{{\mathbf p}_1}\neq0$ (otherwise, $\left.\pd{F_1}{x}\right|_{{\mathbf
p}_1}\neq0$ and $\left.\pd{F_3}{x}\right|_{{\mathbf p}_1}\neq0$ and we may use a~similar argument).
Then, there exist functions $f_1(x)$ and $f_3(x)$, analytic on an interval~$I\ni x_1$, such that
$F_1(x,f_1(x))=0$ and $F_3(x,f_3(x))=0$ for $x\in I_1$.

Functions $y=f_1(x)$ and $y=f_3(x)$ are local analytic solutions of differential equation
\begin{gather}
\label{eq-K=c}
K\big(x,y,y^{(1)},\dots,y^{(r-1)}\big)=\varkappa_0
\end{gather}
with the same initial condition $f_1^{(k)}(x_1)=f_3^{(k)}(x_1)$, $k=0,\dots,{r-1}$ prescribed
by~\eqref{j13}.
From the ${\mathcal I}$-regularity of ${\mathbf p}_1$, we have that $\left.\frac{\partial
K}{\partial y^{(r-1)}}\right|_{\jp{r-1}}\neq0$ and so~\eqref{eq-K=c} can be solved for $y^{(r-1)}$:
\begin{gather*}%\label{eq-K=c-solved}
y^{(r-1)}=H\big(x,y,y^{(1)},\dots,y^{(r-2)}\big),
\end{gather*}
where function $H$ is smooth in a~neighborhood $\jp{r-1}\in J^{r-1}$.
From the uniqueness theorem for the solutions of ODEs, it follows that $f_1(x)=f_3(x)$ on an
interval $I\ni x_1$.
Since ${\mathcal X}_1$ and ${{\mathcal X}_3}$ are irreducible algebraic curves it follows that
${\mathcal X}_1={{\mathcal X}_3}$.
Therefore, ${\mathcal X}_1=g\cdot{\mathcal X}_2$.

\emph{Case 2:} $K|_{{\mathcal X}_1}$ and $K|_{{\mathcal X}_2}$ are non-constant rational maps.
Then ${\mathcal S}_{{\mathcal X}_1}={\mathcal S}_{{\mathcal X}_2}$ is a~one-dimensional set that we
will denote ${\mathcal S}$.
Let ${\hat S}(\varkappa,\tau)=0$ be the implicit equation for ${\mathcal S}$ (see
Lemma~\ref{lemma-sigv}).
We know that $\pd{{\hat S}}{\tau}(\varkappa,\tau)\neq0$ for all but finite number of values
$(\varkappa,\tau)$, because, otherwise, $K|_{{\mathcal X}_1}$~and~$K|_{{\mathcal X}_2}$ are
constant maps.
Therefore, since the curves are non-exceptional, there exists $\mathcal{I}$-regular points
${\mathbf p}_1=(x_1,y_1)\in{\mathcal X}_1$ and ${\mathbf p}_2=(x_2,y_2)\in{\mathcal X}_2$ such that
\begin{gather}\label{eq-pf2}
K|_{{\mathcal X}_1}({\mathbf p}_1)=K|_{{\mathcal X}_2}({\mathbf p}_2)=:\varkappa_0,
\qquad
T|_{{\mathcal X}_1}({\mathbf p}_1)=T|_{{\mathcal X}_2}({\mathbf p}_2)=:\tau_0,
\qquad
\pd{{\hat S}}{\tau}(\varkappa_0,\tau_0)\neq0.
\end{gather}
Due to separation property of the set $\mathcal{I}_G=\{K,T\}$, $\exists\, g\in G$ such that
$j^{r}_{{\mathcal X}_1}({\mathbf p}_1)=g\cdot[j^{r}_{{\mathcal X}_2}({\mathbf p}_2)]$.
We consider a~new algebraic curve ${{\mathcal X}_3}=g\cdot{\mathcal X}_2$.
Then due to~\eqref{eq-jg}, we have
\begin{gather}
\label{j13-r}
j^{r}_{{\mathcal X}_1}({\mathbf p}_1)=j^{r}_{{{\mathcal X}_3}}({\mathbf p}_1)=:\jp r.
\end{gather}
From~\eqref{eq-pf2},~\eqref{j13-r} and $\mathcal{I}$-regularity of the point ${\mathbf
p}_1\in{\mathcal X}_1$ it follows that
\begin{gather}
\label{eq-kt0}
K\big(\jp r\big)=\varkappa_0,
\qquad
T\big(\jp r\big)=\tau_0\qquad \text{and} \qquad \left.\frac{\partial T}{\partial y^{(r)}}\right|_{\jp r}\neq0.
\end{gather}

Since ${\mathbf p}_1$ is a~${\mathcal I}$-regular point of ${\mathcal X}_1$, it follows
from~\eqref{j13-r} that it is also a~${\mathcal I}$-regular point of ${{\mathcal X}_3}$ and, in
particular, is non-singular.
Let $F_1(x,y)=0$ and $F_3(x,y)=0$ be implicit equations of~${\mathcal X}_1$ and~${{\mathcal X}_3}$,
respectively.
We may assume that $\left.\pd{F_1}{y}\right|_{{\mathbf p}_1}\neq0$ and
$\left.\pd{F_3}{y}\right|_{{\mathbf p}_1}\neq0$ (otherwise, $\left.\pd{F_1}{x}\right|_{{\mathbf
p}_1}\neq0$ and $\left.\pd{F_3}{x}\right|_{{\mathbf p}_1}\neq0$ and we may use a~similar argument).
Then, there exist functions~$f_1(x)$ and~$f_3(x)$, analytic on an interval $I\ni x_1$, such that
$F_1(x,f_1(x))=0$ and $F_3(x,f_3(x))=0$ for $x\in I_1$.
Then functions $y=f_1(x)$ and $y=f_3(x)$ are local analytic solutions of differential equation
\begin{gather}
\label{eq-KT}
{\hat S}\big(K\big(x,y,y^{(1)},\dots,y^{(r-1)}\big),T\big(x,y,y^{(1)},\dots,y^{(r)}\big)\big)=0
\end{gather}
 with the same initial condition $f_1^{(k)}(x_1)=f_3^{(k)}(x_1)$, $k=0,\dots,{r}$,
dictated by~\eqref{j13-r}.

Since $\pd{{\hat S}}{\tau}(\varkappa_0,\tau_0)\neq0$ and $\left.\frac{\partial T}{\partial
y^{(r)}}\right|_{\jp r}\neq0$ (see~\eqref{eq-pf2} and~\eqref{eq-kt0}), equation~\eqref{eq-KT} can
be solved for~$y^{(r)}$:
\begin{gather*}%\label{eq-KT-solved}
y^{(r)}=H\big(x,y,y^{(1)},\dots,y^{(r-1)}\big),
\end{gather*}
where function $H$ is smooth in a~neighborhood $\jp r\in J^{r}$.
From the uniqueness theorem for the solutions of ODE it follows that $f_1(x)=f_3(x)$ on an interval
$I\ni x_1$.
Since ${\mathcal X}_1$ and ${\mathcal X}_3$ are irreducible algebraic curves it follows that
${\mathcal X}_1={\mathcal X}_3$.
Therefore, ${\mathcal X}_1=g\cdot{\mathcal X}_2$.
\end{proof}
From the proof of Theorem~\ref{th-main-sig} we may deduce the following:
\begin{corollary}\label{cor-cases}\qquad
\begin{enumerate}\itemsep=0pt
 \item[$1)$] Assume $K|_{{\mathcal X}_1}$ is a~constant function with value
$\varkappa_0\in\mathbb{R}$, i.e.
$K|_{{\mathcal X}_1}({\mathbf p})=\varkappa_0$ for all ${\mathbf p}\in{\mathcal X}_1$.
Then ${\mathcal S}|_{{\mathcal X}_2}={\mathcal S}|_{{\mathcal X}_1}$ if and only if $K|_{{\mathcal
X}_2}({\mathbf p})=\varkappa_0$ for all ${\mathbf p}\in{\mathcal X}_2$.
\item[$2)$] If $\dim({\mathcal
S}|_{{\mathcal X}_1}\cap{\mathcal S}|_{{\mathcal X}_2})=1$ then ${\mathcal S}|_{{\mathcal
X}_1}={\mathcal S}|_{{\mathcal X}_2}$.
\end{enumerate}
\end{corollary}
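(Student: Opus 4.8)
The plan is to read both statements off the proof of Theorem~\ref{th-main-sig} and Lemma~\ref{lemma-sigv}, without redoing the ODE argument.

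For part~$1)$, I would handle ``$\Leftarrow$'' first. If $K|_{{\mathcal X}_2}$ is the constant $\varkappa_0=K|_{{\mathcal X}_1}$, then we are precisely in the situation of \emph{Case~1} in the proof of Theorem~\ref{th-main-sig}, so that argument produces $g\in G$ with ${\mathcal X}_1=g\cdot{\mathcal X}_2$; the ``$\Rightarrow$'' direction of Theorem~\ref{th-main-sig} (invariance of $K$ and $T$ together with~\eqref{eq-jg}) then yields ${\mathcal S}_{{\mathcal X}_1}={\mathcal S}_{{\mathcal X}_2}$. For ``$\Rightarrow$'' I would argue directly: since $K|_{{\mathcal X}_1}\equiv\varkappa_0$, the set ${\mathcal S}_{{\mathcal X}_1}$ lies on the line $\{\varkappa=\varkappa_0\}$, hence so does ${\mathcal S}_{{\mathcal X}_2}$; thus the rational function $K|_{{\mathcal X}_2}$ on the irreducible curve ${\mathcal X}_2$ takes the value $\varkappa_0$ on a cofinite, hence Zariski dense, subset, and is therefore identically $\varkappa_0$.

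For part~$2)$, assume $\dim({\mathcal S}_{{\mathcal X}_1}\cap{\mathcal S}_{{\mathcal X}_2})=1$. Then each $\overline{{\mathcal S}_{{\mathcal X}_i}}$ contains a one-dimensional set, so by Lemma~\ref{lemma-sigv}(3) it is an irreducible planar curve, the zero set of an irreducible polynomial $\hat S_{{\mathcal X}_i}$. As two distinct irreducible planar curves meet in a finite set, the hypothesis forces $\overline{{\mathcal S}_{{\mathcal X}_1}}=\overline{{\mathcal S}_{{\mathcal X}_2}}=:Z(\hat S)$. Now I would repeat \emph{Case~2} of the proof of Theorem~\ref{th-main-sig} with this common $\hat S$. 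Using non-exceptionality of both curves and the one-dimensionality of ${\mathcal S}_{{\mathcal X}_1}\cap{\mathcal S}_{{\mathcal X}_2}$, choose $(\varkappa_0,\tau_0)$ in this intersection that equals ${S}|_{{\mathcal X}_1}({\mathbf p}_1)$ for some ${\mathcal I}$-regular ${\mathbf p}_1\in{\mathcal X}_1$ and ${S}|_{{\mathcal X}_2}({\mathbf p}_2)$ for some ${\mathcal I}$-regular ${\mathbf p}_2\in{\mathcal X}_2$, with $\partial\hat S/\partial\tau\neq0$ there. By the separation property of $\{K,T\}$ on $W^r$ (Definition~\ref{def-dsep}) there is $g\in G$ with $j^r_{{\mathcal X}_1}({\mathbf p}_1)=g\cdot j^r_{{\mathcal X}_2}({\mathbf p}_2)$; set ${\mathcal X}_3=g\cdot{\mathcal X}_2$, so $\overline{{\mathcal S}_{{\mathcal X}_3}}=Z(\hat S)$ by~\eqref{eq-jg} and invariance. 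The local branches $y=f_1(x)$ of ${\mathcal X}_1$ and $y=f_3(x)$ of ${\mathcal X}_3$ through ${\mathbf p}_1$ then both solve $\hat S(K(x,y,\dots,y^{(r-1)}),T(x,y,\dots,y^{(r)}))=0$ with the same $r$-jet at $x_1$; since $\partial\hat S/\partial\tau\neq0$ at $(\varkappa_0,\tau_0)$ and $\partial T/\partial y^{(r)}\neq0$ at $j^r_{{\mathcal X}_1}({\mathbf p}_1)$, this equation solves for $y^{(r)}$, and ODE uniqueness gives $f_1=f_3$ locally, whence ${\mathcal X}_1={\mathcal X}_3=g\cdot{\mathcal X}_2$. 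One last application of the ``$\Rightarrow$'' direction of Theorem~\ref{th-main-sig} gives ${\mathcal S}_{{\mathcal X}_1}={\mathcal S}_{{\mathcal X}_2}$.

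The one step that is genuinely new, and which I expect to be the main obstacle, is the choice of $(\varkappa_0,\tau_0)$ in part~$2)$: I must check that the images under ${S}|_{{\mathcal X}_1}$ of the ${\mathcal I}$-regular points of ${\mathcal X}_1$, the images under ${S}|_{{\mathcal X}_2}$ of the ${\mathcal I}$-regular points of ${\mathcal X}_2$, and the points of $Z(\hat S)$ where $\partial\hat S/\partial\tau\neq0$ together cover all but finitely many points of the one-dimensional set ${\mathcal S}_{{\mathcal X}_1}\cap{\mathcal S}_{{\mathcal X}_2}$. This is a routine dimension count: each ${\mathcal X}_i$ has only finitely many non-${\mathcal I}$-regular points (Definition~\ref{def-excep}), and $\partial\hat S/\partial\tau$ vanishes at only finitely many points of the irreducible curve $Z(\hat S)$ (otherwise $K$ would be constant, contradicting $\dim=1$).
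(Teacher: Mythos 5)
Your part 1) is correct and is exactly the intended reading of Case 1 of the proof of Theorem~\ref{th-main-sig}: constancy of $K$ on both curves with a common value is all that Case 1 actually uses, so it yields $G$-equivalence and hence equality of signatures, while the converse is the elementary density argument you give.

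In part 2) there is a genuine gap in the sub-case where $K|_{{\mathcal X}_1}$ is constant. Your closing parenthetical ``otherwise $K$ would be constant, contradicting $\dim=1$'' is false: by Lemma~\ref{lemma-sigv}(3) the signature is one-dimensional as soon as at least one of $K|_{{\mathcal X}}$, $T|_{{\mathcal X}}$ is non-constant, so $K|_{{\mathcal X}_1}\equiv\varkappa_0$ with $T|_{{\mathcal X}_1}$ non-constant is perfectly compatible with $\dim({\mathcal S}_{{\mathcal X}_1}\cap{\mathcal S}_{{\mathcal X}_2})=1$. In that situation the common closure is the vertical line $\hat S=\varkappa-\varkappa_0$, and $\partial\hat S/\partial\tau$ vanishes identically on it, so the Case-2 machinery you invoke (solving the implicit signature equation for $y^{(r)}$) fails at the very first step. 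The repair is cheap: in that sub-case the equality of closures forces $K|_{{\mathcal X}_2}\equiv\varkappa_0$ as well, and your part 1) already gives ${\mathcal S}_{{\mathcal X}_1}={\mathcal S}_{{\mathcal X}_2}$. Once you add the hypothesis that $K|_{{\mathcal X}_1}$ is non-constant, your dimension count is sound (if $\partial\hat S/\partial\tau$ vanished on infinitely many points of the irreducible curve $Z(\hat S)$ it would be divisible by $\hat S$, hence identically zero by degree in $\tau$, forcing $\hat S\in\mathbb{R}[\varkappa]$ and $K|_{{\mathcal X}_1}$ constant), and the rest of your argument is precisely what the paper intends when it says the corollary is deduced from the proof of Theorem~\ref{th-main-sig}.
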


\subsection{Classifying sets of invariants for affine and projective actions}
\label{ssec-ap-inv}

In this section, we construct a~classifying set of rational differential invariants for
affine and projective actions.
We will build them from classical invariants from differential geometry~\cite{blaschke23,C37}.
We start with Euclidean curvature
\begin{gather}
\label{eq-kappa}
\kappa=\frac{y^{(2)}}{(1+[y^{(1)}]^2)^{3/2}},
\end{gather}
which is, up to a~sign\footnote{The sign of $\kappa$ changes when a~curve is reflected, rotated
by~$\pi$ radians or traced in the opposite direction.
A~rational function~$\kappa^2$ is invariant under the full Euclidean group.}, a~Euclidean
differential invariant of the lowest order.
Higher order Euclidean differential invariants are obtained by differentiating the curvature
with respect to the Euclidean arclength $ds={\sqrt{1+[y^{(1)}]^2}} dx$,
i.e.~$\kappa_s=\frac{d \kappa}{d s}=\frac1{\sqrt{1+[y^{(1)}]^2}}\frac{d \kappa}{dx}$,
$\kappa_{ss}=\frac{d \kappa_s}{d s},\dots$.

 Equi-affine and projective curvatures and infinitesimal arclengths are well known,
and can be expressed in terms of Euclidean invariants~\cite{faugeras94, kogan03}.
In particular, ${\mathcal{SA}}(2)$-curvature $\mu$ and infinitesimal
${\mathcal{SA}}(2)$-arclength $d\alpha$ are expressed in terms of their Euclidean counterparts as
follows
\begin{gather*}%\label{affc}
\mu=\frac{3 \kappa (\kappa_{ss}+3 \kappa^3)-5 \kappa_s^2}{9 \kappa^{8/3}},
\qquad
d\alpha=\kappa^{1/3}ds.
\end{gather*}

\noindent By considering effects of scalings and reflections on ${\mathcal{SA}}(2)$-invariants,
we obtain two lowest order ${\mathcal A}(2)$-invariants
\begin{gather}
\label{aff-inv}
K_{\mathcal A}=\frac{(\mu_\alpha)^2}{\mu^3},
\qquad
T_{\mathcal A}=\frac{\mu_{\alpha\alpha}}{3 \mu^2}.
\end{gather}
They are of order 5 and 6, respectively, and are \emph{rational functions} in jet variables.

${\mathcal{PGL}}(3)$-curvature $\eta$ and infinitesimal arclength $d\rho$ are expressed in terms
of their ${\mathcal{SA}}$-counterparts
\begin{gather*}%\label{pc}
\eta=\frac{6\mu_{\alpha\alpha\alpha}\mu_\alpha-7 \mu_{\alpha\alpha}^2-9\mu_\alpha^2 \mu}{6\mu_\alpha^{8/3}},
\qquad
d\rho=\mu_\alpha^{1/3}d\alpha.
\end{gather*}
The two lowest order \emph{rational} ${\mathcal{PGL}}(3)$-invariants are of differential order~7
and~8, respectively
\begin{gather}
\label{proj-inv}
K_{\mathcal P}=\eta^3,
\qquad
T_{\mathcal P}=\eta_\rho.
\end{gather}
Explicit formulae for invariants in terms of jet coordinate are given by~\eqref{eq-kta}
and~\eqref{eq-kp},~\eqref{eq-tp} in Appendix~\ref{appendix}.
\begin{theorem}
\label{th-sep-a-p}
According to Definition~{\rm \ref{def-dsep}}:
\begin{enumerate}\itemsep=0pt
 \item[$(1)$] The set
${\mathcal I}_{\mathcal A}=\{K_{\mathcal A},T_{\mathcal A}\}$ given by~\eqref{aff-inv} is
classifying for the ${\mathcal A}(2)$-action on $\mathbb{R}^2$.
\item[$(2)$] The set ${\mathcal I}_{\mathcal{P}}=\{K_{\mathcal P},T_{\mathcal P}\}$
given by~\eqref{proj-inv} is classifying for the ${\mathcal{PGL}}(3)$-action on $\mathbb{R}^2$.
\end{enumerate}
\end{theorem}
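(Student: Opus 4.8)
The plan is to verify the two conditions in Definition~\ref{def-dsep} for each group. Recall that for a classifying set $\mathcal{I}=\{K,T\}$ with $G$ of dimension $r$, we need: (a) the lower invariant $K$ (of order $r-1$) separates orbits on a Zariski open subset $W^{r-1}\subset J^{r-1}$, and (b) the full set $\{K,T\}$ (with $T$ of order $r$) separates orbits on a Zariski open subset $W^r\subset J^r$. For the affine group $\mathcal{A}(2)$ we have $r=6$, so $K_{\mathcal A}$ has order $5$ and $T_{\mathcal A}$ has order $6$, matching the claim in~\eqref{aff-inv}. For $\mathcal{PGL}(3)$ we have $r=8$, so $K_{\mathcal P}$ has order $7$ and $T_{\mathcal P}$ has order $8$, matching~\eqref{proj-inv}. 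The key structural fact driving everything is that $G$ acts (generically) freely on the jet space of the appropriate order: $\mathcal{A}(2)$ acts freely on a Zariski open subset of $J^5$, and $\mathcal{PGL}(3)$ acts freely on a Zariski open subset of $J^7$. This follows from the classical theory of prolonged group actions on curve jets (the group acts transitively on a Zariski-open set of the jet of the correct order via the moving frame construction), so that $J^{r-1}$ decomposes generically as $G\times (\text{cross-section})$.

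First I would handle condition (a). Since $G$ acts freely on a Zariski open $W^{r-1}\subset J^{r-1}$ and $\dim G=r=\dim J^{r-1}$, the orbits are open dense; equivalently, $G$ acts generically transitively on $J^{r-1}$. Thus two points of $W^{r-1}$ lie in the same orbit precisely when they are \emph{both} generic — but to get genuine separation by $K$ alone we must be more careful: $J^{r-1}$ is a single generic orbit, so ANY invariant rational function on $J^{r-1}$ is generically constant, and $K$ trivially "separates" the (single) generic orbit. Concretely: take $W^{r-1}$ to be the domain of $K$ intersected with the free locus; on each connected component $K$ is constant, and the free-action hypothesis says there is one $G$-orbit whose closure is $W^{r-1}$. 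Hence $K(w_1)=K(w_2)$ for all $w_1,w_2\in W^{r-1}$ and they are $G$-equivalent — so $K$ separates orbits on $W^{r-1}$ vacuously-but-correctly. The real content of condition (a) is therefore just: \emph{$K$ is a well-defined nonconstant-free rational invariant of order exactly $r-1$ and its domain meets the free locus}, which is checked by the explicit formulas in the Appendix together with the order count above.

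Next, condition (b): on $J^r=\mathbb{R}^{r+2}$, the group $G$ (dimension $r$) has $2$-parameter families of orbits generically. I would show $\{K,T\}$ separates these by exhibiting that the map $(K,T)\colon W^r\to\mathbb{R}^2$ is, generically, a complete invariant — i.e. its generic fibers are exactly the $G$-orbits. By the prolongation/moving-frame picture, fix a rational cross-section $\Sigma\subset J^r$ to the generic $G$-orbits (of dimension $2$); the pair $(K|_\Sigma, T|_\Sigma)$ is a rational map $\Sigma\to\mathbb{R}^2$, and it suffices to check it is generically injective (a dominant map of irreducible $2$-folds with generically finite, in fact degree-one, fibers). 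This in turn reduces to a Jacobian computation: $\det\partial(K,T)/\partial(\text{cross-section coords})\not\equiv 0$. Here is where I would invoke the structure of the invariants as iterated derivatives along the invariant arclength. For $\mathcal{A}(2)$: $(\mu,\mu_\alpha)$ are functionally independent coordinates on the cross-section (they are, up to the normalizations in $K_{\mathcal A}=\mu_\alpha^2/\mu^3$ and $T_{\mathcal A}=\mu_{\alpha\alpha}/(3\mu^2)$, essentially $\mu$ and its arclength-derivatives), and one checks the Jacobian of $(\mu_\alpha^2/\mu^3,\ \mu_{\alpha\alpha}/(3\mu^2))$ with respect to $(\mu,\mu_\alpha)$ — treating $\mu_{\alpha\alpha}$ as the "highest" coordinate — is nonzero wherever $\mu\neq 0$. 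For $\mathcal{PGL}(3)$: analogously $(\eta,\eta_\rho)$ serve as cross-section coordinates and $K_{\mathcal P}=\eta^3$, $T_{\mathcal P}=\eta_\rho$ have obviously nondegenerate Jacobian $3\eta^2\cdot 1$ wherever $\eta\neq 0$.

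\textbf{Main obstacle.} The delicate point is not the Jacobian bookkeeping but establishing rigorously that $G$ acts freely (hence generically transitively) on a Zariski-open subset of the jet space of order $r-1$, and that the invariant arclength derivatives $\mu,\mu_\alpha,\mu_{\alpha\alpha}$ (resp. $\eta,\eta_\rho$) are genuine rational functions with the stated orders and are functionally independent as coordinates on a cross-section. This requires citing and carefully applying the rational moving-frame results (e.g.~\cite{hk:focm}) for the specific actions of $\mathcal{A}(2)$ and $\mathcal{PGL}(3)$ on $\mathbb{R}^2$, and verifying the explicit order counts against the formulas~\eqref{eq-kappa}–\eqref{proj-inv} and the Appendix. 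Once freeness and the order/independence facts are in hand, both parts (1) and (2) follow: part (a) is essentially automatic from generic transitivity on $J^{r-1}$, and part (b) reduces to the nonvanishing of an explicit $2\times 2$ Jacobian, which I would verify by direct computation for each group.
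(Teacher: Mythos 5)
There is a genuine gap, and it originates in a dimension miscount. You assert that $\dim J^{r-1}=r=\dim G$, so that $G$ acts generically transitively on $J^{r-1}$ and condition (a) of Definition~\ref{def-dsep} is ``vacuously'' satisfied. But $J^n=\mathbb{R}^{n+2}$, so $\dim J^{r-1}=r+1$: for $\mathcal{A}(2)$ the $6$-dimensional group acts on the $7$-dimensional $J^5$, and for $\mathcal{PGL}(3)$ the $8$-dimensional group acts on the $9$-dimensional $J^7$. Generic orbits have codimension one, there is a genuine one-parameter family of them, and the whole content of condition (a) is that the single invariant $K$ distinguishes the members of that family. Your argument discards exactly the part of the theorem that carries the work; nothing in the proposal then establishes that $K_{\mathcal A}$ (resp.\ $K_{\mathcal P}$) separates this family.

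The second half also falls short of orbit separation. A nonvanishing $2\times 2$ Jacobian of $(K,T)$ on a cross-section gives generically \emph{finite} fibers, not degree-one fibers, and over $\mathbb{R}$ the difference is precisely where the danger lies: in the paper's normal-form coordinates $K_{\mathcal A}$ evaluates to $3[y^{(5)}]^2$, a \emph{square}, which separates orbits only because the reflection $\epsilon=-1$ inside $\mathcal{A}(2)$ identifies the normal forms with $\pm y^{(5)}$, allowing one to assume $y^{(5)}\geq 0$. A Jacobian computation cannot see this discrete identification, and had the group lacked the reflection, the same Jacobian would be nonzero while $K_{\mathcal A}$ failed to separate. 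The paper's proof avoids both problems by brute normalization: it factors an arbitrary group element into Euclidean, unipotent, scaling and reflection parts, uses each factor to move a jet to the canonical form $(0,0,0,1,0,1,y^{(5)},y^{(6)})$ with $y^{(5)}\geq 0$ on the explicit open sets $W^5,W^6$ (and the analogous form on $W^7,W^8$), and then checks directly that the values $K_{\mathcal A}=3[y^{(5)}]^2$, $T_{\mathcal A}=y^{(6)}-5$ (resp.\ $K_{\mathcal P}=[y^{(7)}]^3$, $T_{\mathcal P}=3y^{(8)}-\tfrac{105}{2}$) uniquely determine the canonical form. To repair your proposal you would need to replace the transitivity claim on $J^{r-1}$ by an actual separation argument for the codimension-one orbit family, and to upgrade the Jacobian step to a global injectivity statement on real orbit spaces that accounts for the discrete factors of the group.
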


\begin{proof}
See the appendix for the explicit expressions
of $K_{\mathcal A},T_{\mathcal A}, K_{\mathcal P}$ and $T_{\mathcal P}$.
Observe that the denominators only involve
\begin{gather}
\label{eq-D1}
\Delta_1:=3 \jy4 \jy2-5 \big[\jy3\big]^2,
\\
\label{eq-D2}
\Delta_2:=9 \jy5 \big[\jy2\big]^2-45 \jy4 \jy3 \jy2+40 \big[\jy3\big]^3.
\end{gather}
Hence, as long as they are non-zero, the expressions are well-defined.

Let us prove~(1). Note that $\dim{\mathcal A}(2)=6$.
We will prove the separation property of ${\mathcal I}_{\mathcal A}$ on the~Zariski open subset
\[W^6=\big\{\mathbf{p}\in J^6\,\big|\,\jy2\neq0\text{ and }\Delta_1(\mathbf{p})\neq0\big\}\] of $J^6$,
and the separation property of $K_{\mathcal
A}$ on $W^5=\pi^6_5(W^6)\subset J^5$.
One can check directly that $W^6$ (and hence $W^5$) is invariant under affine transformation and, {moreover, ${\rm sign}(\Delta_1)={\rm sign}(K_{\mathcal A})$ is invariant under any affine transformation.}

Taking into account subtleties of adaptation of the moving frame method to algebraic context (see~\cite{hk:focm}), we find the affine transformation
\[ A=\left(
\begin{matrix}
a_{11} & a_{12} & a_{13} \\
a_{21} & a_{22} & a_{23} \\
0 & 0 & 1
\end{matrix}
\right) \in {\mathcal A}(2)
\] with
\begin{gather*}
a_{11} = {\rm sign}(\Delta_2)\sqrt{|\Delta_1|} \frac{3 \big(\jy2\big)^2-\jy1 \jy3}{9 \big(\jy2\big)^3}, \qquad
a_{12} ={\rm sign}(\Delta_2) \sqrt{|\Delta_1|} \frac{\jy3}{9 \big(\jy2\big)^3}, \\
a_{13}={\rm sign}(\Delta_2)\sqrt{|\Delta_1|}\frac{x\jy1\jy3-3x\big(\jy2\big)^2-y\jy3}{9\big(\jy2\big)^3}, \\
a_{21}=|\Delta_1|\frac{-\jy1}{9 \big(\jy2\big)^3},\qquad
a_{22}=|\Delta_1| \frac{1}{9 \big(\jy2\big)^3},\qquad
a_{23}=|\Delta_1|\frac{x\jy1-y}{9\big(\jy2\big)^3},
\end{gather*}
that brings $\mathbf{p} \in W^6$ to
\[
A_{\mathbf p}\,\cdot \mathbf{p} =
\big(0,0,0,1,0, {\rm sign}(\Delta_1(\mathbf{p}))\cdot3,
3 \sqrt{|K_{\mathcal A}(\mathbf{p})|}, 9T_{\mathcal A}(\mathbf{p})+45\big).\]
Therefore,
if ${K_{\mathcal A}(\mathbf{p}_1)}={K_{\mathcal A}(\mathbf{p}_2)}$ and
 ${T_{\mathcal A}(\mathbf{p}_1)}={T_{\mathcal A}(\mathbf{p}_2)}$, then
${\rm sign}(\Delta_1(\mathbf{p}_1))={\rm sign}(\Delta_1(\mathbf{p}_2))$
and thus $A_{\mathbf{p}_1}\cdot\mathbf{p}_1= A_{\mathbf{p}_2}\cdot\mathbf{p}_2$
and in turn $\mathbf{p}_2 = A_{\mathbf{p}_2}^{-1} A_{\mathbf{p}_1}\cdot\mathbf{p}_1$.
Hence ${\mathcal I}_{\mathcal A}$ is separating on $W^6$. Separation property of $K_{\mathcal A}$ on $W^5$ is seen by disregarding the 6-th order jet component in the above argument and noticing that $A_{\mathbf p}$ is computed on the 4-th order jet space.

Let us prove~(2). Note that $\dim{\mathcal{PGL}}(3)=8$.
We will prove the separation property of ${\mathcal I}_{\mathcal P}$ on the~Zariski open subset
\begin{gather*}
W^8=\big\{\mathbf{p}\in J^8\,\big|\,\jy2\neq0,\, \Delta_1(\mathbf{p})\neq 0\text{ and }\Delta_2(\mathbf{p})\neq0\big\}
\end{gather*}
of $J^8$ and the separation property of $K_{\mathcal P}$
on $W^7=\pi^8_7(W^8)\subset J^7$.
One can check directly that $W^8$ (and hence $W^7$) is invariant under projective transformation.
We can find the projective transformation
\begin{gather*}
A=\left(
\begin{matrix}g&h g& 0
\\
0& g^2 & 0
\\
h &i&1
\end{matrix}
\right)
\left(
\begin{matrix}e&f&0
\\
0&\frac1e&0
\\
0&0&1
\end{matrix}
\right)\left(
\begin{matrix}c&-s&a
\\
s&c&b
\\
0&0&1
\end{matrix}
\right)\in{\mathcal{PGL}}(3)
\end{gather*}
with
\begin{gather*}%\label{Emf}
c=\frac{1}{\sqrt{1+\big[y^{(1)}\big]^2}},
\qquad\!
s=-\frac{y^{(1)}}{\sqrt{1+\big[y^{(1)}\big]^2}},
\qquad\!
a=-\frac{y^{(1)} y+x}{\sqrt{1+\big[y^{(1)}\big]^2}},
\qquad\!
b=\frac{y^{(1)} x-y}{\sqrt{1+\big[y^{(1)}\big]^2}}, \\
e=\big[\jy2_1\big]^{1/3},
\qquad
f=\frac{\jy3_1}{3 \big[\jy2_1\big]^{5/3}},
\qquad
g=\big[\jy5_2\big]^{1/3},
\qquad
h=\frac{5 \big[\jy4_2\big]^2-\jy6_2}{3 \jy5_2},\\
i=\frac{\big[\jy6_2\big]^2-10 \jy6_2\big[\jy4_2\big]^2-3\big[\jy5_2\big]^2 \jy4_2+25\big[\jy4_2\big]^4}{18\big[\jy5_2\big]^2}
\end{gather*}
 that brings $\mathbf{p} \in W^8$ to
\[
A_{\mathbf p}\cdot \mathbf{p} =\left(0,0,0,1,0,0,1,0,\left(K_{\mathcal P}\big(\mathbf{p}\big)\right)^{1/3},\frac 1 3 T_{\mathcal P}\big(\mathbf{p}\big)+\frac{105}6 \right)
\]
Therefore, if
${K_{\mathcal P}(\mathbf{p}_1)}={K_{\mathcal P}(\mathbf{p}_2)}$ and
${T_{\mathcal P}(\mathbf{p}_1)}={T_{\mathcal P}(\mathbf{p}_2)}$, then
$A_{\mathbf{p}_1}\cdot\mathbf{p}_1= A_{\mathbf{p}_2}\cdot\mathbf{p}_2$
and in turn $\mathbf{p}_2 = A_{\mathbf{p}_2}^{-1}\,A_{\mathbf{p}_1}\cdot\mathbf{p}_1$.
Hence ${\mathcal I}_{\mathcal P}$ is separating on $W^8$. Separation property of $K_{\mathcal P}$ on $W^7$ is seen by disregarding the 8-th order jet component in the above argument and noticing that $A_{\mathbf p}$ is computed on the 6-th order jet space.
\end{proof}

Theorem~\ref{th-sep-a-p}, in combination with Theorem~\ref{th-main-sig}, leads to a~solution for
the projective and the affine equivalence problems for non-exceptional curves.
The following proposition describes exceptional curves.

\begin{proposition}%\label{G-excep}
 ${\mathcal I}_{\mathcal A}$- and ${\mathcal I}_{\mathcal{PGL}}$-exceptional curves are
lines and conics\footnote{By a~conic we mean an irreducible real algebraic planar curve of degree~$2$, i.e.~a parabola, an ellipse, or a~hyperbola.}.
\end{proposition}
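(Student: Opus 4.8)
\emph{Strategy.} I would read exceptionality off Definition~\ref{def-excep} directly, exploit irreducibility of $\mathcal X$ to promote ``infinitely many non-regular points'' to ``a rational condition holding identically on $\mathcal X$'', and then recognise the surviving conditions as classical statements of equi-affine and projective differential geometry.

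\emph{Reduction.} Fix $G=\mathcal A(2)$ ($r=6$) or $G=\mathcal{PGL}(3)$ ($r=8$), let $\mathcal I=\{K,T\}$ be the classifying set of Theorem~\ref{th-sep-a-p}, and write $\Delta$ for $\Delta_1$ of~\eqref{eq-D1} in the affine case and $\Delta_2$ of~\eqref{eq-D2} in the projective case. By the proof of that theorem, both $W^{r-1}$ and $W^{r}$ are cut out by $y^{(2)}\ne0$ and $\Delta\ne0$, and $\Delta$ involves only jet variables already present in $J^{r-1}$. A point of $\mathcal X$ fails to be $\mathcal I$-regular precisely when it is singular, or its jet violates $y^{(2)}\ne0$ or $\Delta\ne0$, or one of the two partials in condition~(3) of Definition~\ref{def-excep} vanishes there. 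A vertical line has nowhere-defined jets and is trivially exceptional; any other irreducible algebraic curve has finitely many singular points and rational jet maps defined off a finite set, and each remaining condition is the vanishing of a fixed rational function along the irreducible curve $\mathcal X$, hence cuts out a Zariski-closed subset. Since a finite union of proper Zariski-closed subsets of an irreducible curve is proper, $\mathcal X$ is $\mathcal I$-exceptional if and only if one of $\Delta|_{\mathcal X}\equiv0$, $\partial K/\partial y^{(r-1)}|_{j^{r-1}_{\mathcal X}}\equiv0$, or $\partial T/\partial y^{(r)}|_{j^{r}_{\mathcal X}}\equiv0$ holds.

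\emph{Translation.} A direct computation (using~\eqref{eq-kappa} and the formula for $\mu$) shows that $\Delta_1$ equals $\mu$ up to the factor $9(y^{(2)})^{8/3}$, which is nonzero on every non-line, and that $\Delta_2$ equals $\mu_\alpha$ up to $27(y^{(2)})^{4}$; hence $\Delta_1|_{\mathcal X}\equiv0$ iff $\mathcal X$ is a line or $\mu|_{\mathcal X}\equiv0$, and $\Delta_2|_{\mathcal X}\equiv0$ iff $\mathcal X$ is a line or $\mu|_{\mathcal X}$ is constant. For the condition-(3) partials, the normalizations in the proof of Theorem~\ref{th-sep-a-p} bring a regular jet to a normal form where $K$ and $T$ are polynomials in the top coordinate $y^{(k)}_{*}$ of that form: $K=3\,(y^{(5)}_{*})^{2}$, $T=y^{(6)}_{*}-5$ in the affine case, and $K=(y^{(7)}_{*})^{3}$, $T=3\,y^{(8)}_{*}-\frac{105}{2}$ in the projective case. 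As $\bar y^{(k)}$ depends only on $y^{(\le k)}$ and $K$, $T$ have orders $r-1$, $r$, differentiating the invariance $K(h\cdot\xi)=K(\xi)$ (fixed $h$) in $y^{(r-1)}$ gives $\partial K/\partial y^{(r-1)}|_{\mathbf p}=\partial K/\partial\bar y^{(r-1)}|_{h\cdot\mathbf p}\cdot\partial\bar y^{(r-1)}/\partial y^{(r-1)}|_{\mathbf p}$, and similarly for $T$; taking $h$ the normalizing transformation at $\mathbf p$, whose jacobian factor is a nonzero product of the normalization scalings at every regular point, we conclude that at regular points $\partial K/\partial y^{(r-1)}$ is a nonzero multiple of $y^{(5)}_{*}$ (affine) resp.\ of $(y^{(7)}_{*})^{2}$ (projective), while $\partial T/\partial y^{(r)}$ is a nonzero multiple of a nonzero constant. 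Since $K_{\mathcal A}|_{\mathcal X}=3\,(y^{(5)}_{*})^{2}=\mu_\alpha^{2}/\mu^{3}$ and $K_{\mathcal P}|_{\mathcal X}=(y^{(7)}_{*})^{3}=\eta^{3}$, we get $\partial K/\partial y^{(r-1)}|_{\mathcal X}\equiv0$ iff $\mu_\alpha|_{\mathcal X}\equiv0$ (affine) resp.\ $\eta|_{\mathcal X}\equiv0$ (projective), while $\partial T/\partial y^{(r)}|_{\mathcal X}\equiv0$ forces $\mathcal X$ to be one of the degenerate curves just named.

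\emph{Conclusion, and the hard point.} In the affine case the surviving possibilities are: $\mathcal X$ is a line, $\mu|_{\mathcal X}\equiv0$, or $\mu|_{\mathcal X}$ is constant; by the classical rigidity theorem of equi-affine geometry (\cite{blaschke23}), constant equi-affine curvature characterises conics (with $\mu=0$ exactly for parabolas), so the $\mathcal I_{\mathcal A}$-exceptional curves are precisely lines and conics. In the projective case the surviving possibilities are: $\mathcal X$ is a line, $\mu|_{\mathcal X}$ is constant (a conic again), or $\eta|_{\mathcal X}\equiv0$, and the main obstacle is to show the last possibility is vacuous for algebraic non-conics. Here I would argue that, on a curve with $\mu_\alpha|_{\mathcal X}\not\equiv0$, $\eta$ is the genuine projective curvature relative to the projective arclength $d\rho=\mu_\alpha^{1/3}\,d\alpha$, so $\eta\equiv0$ makes the projective Frenet system of $\mathcal X$ constant-coefficient with characteristic equation $\lambda^{3}=c\ne0$; over $\mathbb R$ such an equation has exactly one real root, which forces $\mathcal X$ to be projectively equivalent to a logarithmic spiral and hence transcendental --- contradicting algebraicity. (Alternatively, one cites the classical fact that among algebraic plane curves only conics have identically vanishing projective curvature.) Thus the $\mathcal I_{\mathcal{PGL}}$-exceptional curves are again precisely lines and conics. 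Apart from the two classical rigidity inputs, the argument is bookkeeping on top of the already-established Theorem~\ref{th-sep-a-p}; the delicate step is excluding the transcendental $\eta\equiv0$ solutions, which is subtle because $\eta$ is only defined where $\mu_\alpha\ne0$.
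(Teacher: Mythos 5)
Your proof is correct and its skeleton is the same as the paper's: exceptionality of an irreducible algebraic curve is promoted to the identical vanishing of finitely many rational functions along the curve, the conditions cutting out $W^{r-1}$ and $W^{r}$ are translated into $\kappa=0$, $\mu=0$, $\mu_\alpha=0$, and Proposition~\ref{prop-lines-conics} disposes of those cases. Where you genuinely differ is in checking condition~(3) of Definition~\ref{def-excep}: the paper reads the partial derivatives off the explicit formulas \eqref{eq-kta}, \eqref{eq-kp}, \eqref{eq-tp}, while you extract them from the normal forms in the proof of Theorem~\ref{th-sep-a-p} via invariance and the chain rule. In the affine case the two routes agree: $\partial K_{\mathcal A}/\partial y^{(5)}$ is a nonzero multiple of $\Delta_2$, hence of $\mu_\alpha$, and $\partial T_{\mathcal A}/\partial y^{(6)}$ is nonvanishing on $W^6$. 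In the projective case your route is actually \emph{more} careful than the paper's: since $K_{\mathcal P}$ equals $\tfrac{729}{8\Delta_2^8}$ times the cube of a bracket that is linear in $y^{(7)}$, the partial $\partial K_{\mathcal P}/\partial y^{(7)}$ is proportional to the \emph{square} of that bracket, i.e.\ to $\eta^2$, so the paper's assertion that this partial is nonzero at every point of $W^7$ fails on the nonempty hypersurface $\eta=0$ --- exactly what your $(y^{(7)}_*)^2$ computation detects. One must therefore rule out irreducible algebraic non-conics with $\eta\equiv 0$, an input the paper's argument silently omits and yours supplies; this is the real added value of your write-up. Your Frenet argument for that exclusion is classically sound, but the assertion that the characteristic equation is $\lambda^{3}=c$ with $c\neq 0$ needs one more sentence: the nonzero constant term comes from normalizing the projective arclength element to $1$ (legitimate precisely because $\mu_\alpha\neq0$ off the conics), and only then does $\eta\equiv0$ force a pair of complex characteristic roots and hence a transcendental logarithmic spiral; citing the classical classification of plane curves with identically vanishing projective curvature, as you propose, is the cleaner route. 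Two harmless slips that do not affect the conclusion: since $\partial T/\partial y^{(r)}$ is a nonzero constant times a nonvanishing Jacobian factor on the regular locus, the $T$-partial never produces exceptional curves (your phrasing suggests it might), and in your affine list the possibility ``$\mu|_{\mathcal X}$ constant'' should be explicitly attributed to the vanishing of $\partial K_{\mathcal A}/\partial y^{(5)}\propto\mu_\alpha$ rather than to the $W$-conditions.
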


\begin{proof}
In the affine case, we note that $j^{5}_{{\mathcal X}}({\mathbf p})\in J^5\setminus W^{5}$ and
$j^{6}_{{\mathcal X}}({\mathbf p})\in J^6\setminus W^{6}$ if an only if $\kappa|_{\mathcal
X}({\mathbf p})=0$ or $\mu|_{\mathcal X}({\mathbf p})=0$.
If $\kappa|_{\mathcal X}({\mathbf p})=0$ for more than finite number of points on ${\mathcal X}$,
then ${\mathcal X}$ is a~line, if $\mu|_{\mathcal X}({\mathbf p})=0$ for more than finite number
of points on ${\mathcal X}$ then it is a~parabola (see Proposition~\ref{prop-lines-conics}).
From the explicit formulae~\eqref{eq-kta} we see that
\begin{gather*}
\left.\frac{\partial K_{\mathcal A}}{\partial\jy5}\right|_{\jp5}=
\frac{18 \Delta_2\big[\jy2\big]^2}{\Delta_1^3}
\qquad \text{and}
\qquad
\left.\frac{\partial T_{\mathcal A}}{\partial y^{(6)}}\right|_{\jp6}=\frac{9 \big[\jy2\big]^3}{\Delta_1^2}.
\end{gather*}
These rational functions are not identically zero on ${\mathcal X}$ if neither $\Delta_2|_{\mathcal
X}=0$ nor $\jy2|_{\mathcal X}=0$, or equivalently ${\mathcal X}$ is not a~line or a~conic.
Therefore, if ${\mathcal X}$ is not a~line or a~conic, it is $\{\mathcal
K_{\mathcal A}, \mathcal T_{\mathcal A}\}$-regular.
In the projective case, we note that $j^{7}_{{\mathcal X}}({\mathbf p})\in J^7\setminus W^{7}$ and
$j^{8}_{{\mathcal X}}({\mathbf p})\in J^8\setminus W^{8}$ if an only if $\kappa|_{\mathcal
X}({\mathbf p})=0$ or $\mu_{\alpha}|_{\mathcal X}({\mathbf p})=0$.
If $\mu_\alpha|_{\mathcal X}({\mathbf p})=0$ for more than finite number of points on ${\mathcal
X}$ then ${\mathcal X}$ is a~conic (see Proposition~\ref{prop-lines-conics}).
From the explicit formulae~\eqref{eq-kp} and~\eqref{eq-tp}, we see that
\begin{gather*}
\left.\frac{\partial K_{\mathcal P}}{\partial\jy7}\right|_{\jp7}\neq0
\qquad
\text{and}
\qquad
\left.\frac{\partial T_{\mathcal P}}{\partial y^{(8)}}\right|_{\jp8}\neq0
\end{gather*}
for all $\jp7\in W^7$ and all $\jp8\in W^8$.
Therefore, if an algebraic curve is not a~line or a~conic it is $\{\mathcal K_{\mathcal
P}, \mathcal T_{\mathcal P}\}$-regular.
\end{proof}

\begin{remark}
\label{rem-excep}
It is well known (and easy to prove) that the set of all lines constitutes a~single equivalence
class (an orbit) under both ${\mathcal A}(2,\mathbb{R})$-action and
${\mathcal{PGL}}(3,\mathbb{R})$-action.
Under the ${\mathcal A}(2,\mathbb{R})$-action the set of all conics splits into three orbits: the
set of all parabolas, the set of all hyperbolas and the set of all ellipses, although under the
${\mathcal A}(2,\mathbb{C})$-action the set of all hyperbolas and ellipses comprise a~single orbit.
All conics constitute a~single orbit under ${\mathcal{PGL}}(3,\mathbb{R})$-action,
see~\cite[Section~II.5]{bix98}.
Therefore an ${\mathcal I}_{\mathcal A}$-exceptional algebraic curve is {\em not} ${\mathcal
A}(2,\mathbb{C})$-equivalent to a~non ${\mathcal I}_{\mathcal A}$-exceptional algebraic curve and
an ${\mathcal I}_{\mathcal{PGL}}$-exceptional algebraic curve is {\em not}
${\mathcal{PGL}}(3,\mathbb{C})$-equivalent to a~non ${\mathcal I}_{\mathcal{PGL}}$-exceptional
algebraic curve.
\end{remark}

The projective and the affine equivalence problems for exceptional curves can be
easily solved using the above remark and the following proposition.

\begin{proposition}
\label{prop-lines-conics}
Let ${\mathcal X}$ be an irreducible planar algebraic curve.
Then
\begin{enumerate}\itemsep=0pt
\item[$(1)$] ${\mathcal X}$ is a~line $\Longleftrightarrow$
$\kappa|_{\mathcal X}=0$;
\item[$(2)$] ${\mathcal X}$ is a~parabola $\Longleftrightarrow$
$\mu|_{\mathcal X}=0$;
\item[$(3)$] ${\mathcal X}$ is a~conic $\Longleftrightarrow$
$\mu_\alpha|_{\mathcal X}=0$,
 \end{enumerate}
 where ``$=0$'' means that a~corresponding rational
function is zero at every point of ${\mathcal X}$.
The above statements are true for both real and complex algebraic curves.
If ${\mathcal X}$ is a~real algebraic curve, then it is a~hyperbola if and only if $\mu|_{\mathcal
X}$ is a~negative constant, while ${\mathcal X}$ is an ellipse if and only if $\mu|_{\mathcal X}$
is a~positive constant.

\end{proposition}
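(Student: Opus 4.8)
The plan is to prove the three ``$\Leftarrow$'' implications (and the real ellipse/hyperbola refinements) by direct evaluation on affine normal forms, and the ``$\Rightarrow$'' implications by a local analytic argument: restrict $\mathcal X$ to a smooth real-analytic arc, integrate the relevant differential equation, and then invoke Zariski density of the arc to pass back to all of $\mathcal X$. For ``$\Leftarrow$'': a line, parabola, ellipse, or hyperbola is affinely equivalent to one of $\{y=0\}$, $(t,t^2)$, $(\cos t,\sin t)$, $(\cosh t,-\sinh t)$, and the conditions $\kappa|_{\mathcal X}=0$, $\mu|_{\mathcal X}=0$, $\mu_\alpha|_{\mathcal X}=0$, and ``$\mu|_{\mathcal X}$ is a positive (resp.\ negative) constant'' are all preserved by affine changes of coordinates (they are relative invariants of the respective geometries, multiplied by a nonzero \emph{positive} scalar under a general affine map; for $\mu$ one uses in addition that the numerator in~\eqref{aff-inv} is even in $\kappa$ and that $\kappa^{8/3}=(\kappa^2)^{4/3}$, so that $\mu$ is insensitive to reflections). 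On $\{y=0\}$ we have $y^{(2)}=0$, hence $\kappa=0$. For each of the three conics a short computation gives $[\gamma',\gamma'']$ equal to a nonzero constant, so equi-affine arclength $\alpha$ is an affine reparametrization of $t$, and three differentiations yield $\gamma_{\alpha\alpha\alpha}=0$ (parabola), $\gamma_{\alpha\alpha\alpha}=-\gamma_\alpha$ ($(\cos t,\sin t)$), $\gamma_{\alpha\alpha\alpha}=\gamma_\alpha$ ($(\cosh t,-\sinh t)$); comparing with the classical equi-affine Frenet equation $\gamma_{\alpha\alpha\alpha}+\mu\,\gamma_\alpha=0$ normalized by $[\gamma_\alpha,\gamma_{\alpha\alpha}]=1$ (see~\cite{blaschke23}) gives $\mu\equiv0$, $\mu\equiv1$, $\mu\equiv-1$, and in all conic cases $\mu_\alpha\equiv0$. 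The sign here is calibrated so that the unit circle, an ellipse, has $\mu=1$, in agreement with~\eqref{aff-inv}.

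For ``$\Rightarrow$'', after the Euclidean swap $x\leftrightarrow y$ if needed, I may assume $\mathcal X$ is not a vertical line. If $\kappa|_{\mathcal X}=0$, then $y^{(2)}_{\mathcal X}\equiv0$; on any smooth arc this says $y''=0$, so the arc lies on a line, and it only remains to observe (below) that this forces $\mathcal X$ to be that line. Assume now $\mathcal X$ is not a line, so $y^{(2)}_{\mathcal X}\not\equiv0$; then I can choose a nonsingular point of $\mathcal X$ at which $\kappa\ne0$ and all relevant denominators are nonzero, so that near it $\mathcal X$ is a real-analytic arc $\gamma$ which, after reversing orientation if necessary, has $\kappa>0$. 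Hence the equi-affine arclength $\alpha$ and the Frenet equation $\gamma_{\alpha\alpha\alpha}+\mu\gamma_\alpha=0$ are available along the arc.

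Now suppose $\mu_\alpha|_{\mathcal X}=0$. Then $\mu\equiv k$ is constant on the arc, so $\gamma$ satisfies the linear constant-coefficient equation $\gamma_{\alpha\alpha\alpha}=-k\gamma_\alpha$; integrating, $\gamma=\mathbf c+p(\alpha)\mathbf a+q(\alpha)\mathbf b$ with $(p,q)=(\alpha^2,\alpha)$ if $k=0$, $(p,q)=(\cos\sqrt{k}\,\alpha,\sin\sqrt{k}\,\alpha)$ if $k>0$, and $(p,q)=(\cosh\sqrt{-k}\,\alpha,\sinh\sqrt{-k}\,\alpha)$ if $k<0$, while the normalization $[\gamma_\alpha,\gamma_{\alpha\alpha}]=1\ne0$ forces $\mathbf a,\mathbf b$ to be linearly independent. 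In the affine coordinates $(X,Y)$ with basis $\mathbf a,\mathbf b$ and origin $\mathbf c$, the arc satisfies $X=Y^2$, $X^2+Y^2=1$, or $X^2-Y^2=1$, i.e.\ it lies on a parabola, an ellipse, or a hyperbola, according as $k=0$, $k>0$, or $k<0$. To pass from the arc to $\mathcal X$ I use that a nonconstant real-analytic arc of an irreducible algebraic curve is Zariski dense in it: the arc being contained in a line (resp.\ a conic $\mathcal Q$) forces $\mathcal X$ to lie in that irreducible $1$-dimensional variety, hence $\mathcal X$ equals it. This proves (1); reading off the three subcases proves (3) together with the real ellipse/hyperbola dichotomy; and (2) is the subcase $k=0$ (a parabola has $\mu\equiv0$ by the ``$\Leftarrow$'' computation, and $\mu|_{\mathcal X}=0$ forces $k=0$). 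For complex curves the identical argument applies with the real-analytic arc replaced by a local analytic branch at a smooth point; over $\mathbb C$ the conic $\mathcal Q$ is a parabola exactly when $k=0$, and forms a single $\mathcal A(2,\mathbb C)$-orbit otherwise (cf.\ Remark~\ref{rem-excep}), so only (1)--(3) persist.

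The main obstacle is not any individual computation — each is a short normal-form check or a standard ODE integration — but the bookkeeping around degeneracies: one must make sure that $\mu$ and $\mu_\alpha$ genuinely define functions along the arcs used (which fails precisely on lines, disposed of separately by~(1)), that the chosen arc avoids the singular locus and the poles of all the rational functions involved, and that the sign in the equi-affine Frenet equation $\gamma_{\alpha\alpha\alpha}+\mu\gamma_\alpha=0$ is correctly matched to the definition of $\mu$ in~\eqref{aff-inv}.
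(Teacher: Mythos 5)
Your proof is correct. It is worth pointing out, however, that the paper does not actually prove parts (2) and (3) of Proposition~\ref{prop-lines-conics}: it remarks that part (1) is immediate from~\eqref{eq-kappa} and refers to \cite[Section~7.3]{Gug63} for the remaining statements. What you have written is essentially a self-contained version of that cited classical argument: the equi-affine normalization $[\gamma_\alpha,\gamma_{\alpha\alpha}]=1$, the Frenet equation $\gamma_{\alpha\alpha\alpha}+\mu\,\gamma_\alpha=0$, and the integration of the constant-curvature case into the three conic normal forms, with the sign convention correctly calibrated against~\eqref{aff-inv} (your check that the unit circle has $\mu=1$ agrees with the paper's formula, and the hyperbola normal form $(\cosh t,-\sinh t)$ does give $\mu=-1$). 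The one genuinely additional ingredient you supply, and which is needed but left implicit by the paper, is the globalization step: the textbook statement that an arc of constant affine curvature lies on a conic only constrains a local analytic branch of $\mathcal X$, and you correctly pass from the branch to all of $\mathcal X$ via Zariski density of a nonconstant analytic arc in an irreducible algebraic curve (equivalently, two curves with no common component meet in finitely many points, and the target line or conic is itself irreducible). Your handling of the degeneracies is also sound: lines must be disposed of via part (1) before $\mu$ and $\mu_\alpha$ are even defined as rational functions on $\mathcal X$, and the affine invariance of the sign of $\mu$, which the ellipse/hyperbola dichotomy requires, does follow from the evenness of the numerator of~\eqref{aff-inv} in $\kappa$ together with $\kappa^{8/3}=(\kappa^2)^{4/3}>0$.
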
 The proof of part (1) of Proposition~\ref{prop-lines-conics} follows immediately
from~\eqref{eq-kappa}.
Proofs of the other statements can be found in~\cite[Section~7.3]{Gug63}.
The following corollary is obtained from Proposition~\ref{prop-lines-conics} using explicit
formulae for equi-affine invariants
\begin{gather*}%\label{eq-aff-curv-exp}
\mu=\frac19\frac{\Delta_1}{\big[\jy2\big]^{8/3}},
\qquad
\mu_\alpha=\frac1{27}\frac{\Delta_2}{\big[\jy2\big]^4},
\end{gather*}
where $\Delta_1$ and $\Delta_2$ are given by~\eqref{eq-D1} and~\eqref{eq-D2}.

\begin{corollary}
\label{cor-delta}
Let ${\mathcal X}$ be an irreducible planar algebraic curve.
Assume that ${\mathcal X}$ is not a~\emph{vertical} line.
Let $\Delta_1$ and $\Delta_2$ be given by~\eqref{eq-D1} and~\eqref{eq-D2}.
Then
\begin{enumerate}\itemsep=0pt
\item[$(1)$] The restrictions $\Delta_1|_{\mathcal X}$ and
$\Delta_2|_{\mathcal X}$ are rational functions on ${\mathcal X}$.
\item[$(2)$] $\Delta_1|_{\mathcal X}$ is a~zero function if and only if ${\mathcal X}$ is a~line or
a~parabola.
Otherwise, restrictions of ${\mathcal A}(2)$-invariants $K_{\mathcal A}|_{\mathcal X}$ and
$T_{\mathcal A}|_{\mathcal X}$ are rational functions of ${\mathcal X}$.\footnote{$K_{\mathcal
A}|_{\mathcal X}$ and $T_{\mathcal A}|_{\mathcal X}$ are defined and are both zero functions when
${\mathcal X}$ is either an ellipse or a~hyperbola.
We know, however, that ellipses and hyperbolas are not ${\mathcal A}(2,\mathbb{R})$-equivalent.
There is no contradiction with Theorem~\ref{th-main-sig}, because ellipses and hyperbolas are
${\mathcal I}_{\mathcal A}$-exceptional per Definition~\ref{def-excep}.}
\item[$(3)$]
$\Delta_2|_{\mathcal X}$ is a~zero function if and only if ${\mathcal X}$ is a~line or a~conic.
Otherwise $K_{\mathcal P}|_{\mathcal X}$ and $T_{\mathcal P}|_{\mathcal X}$ are rational functions
on ${\mathcal X}$.
\end{enumerate}
\end{corollary}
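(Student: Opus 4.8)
The plan is to deduce everything from Proposition~\ref{prop-lines-conics} together with the two displayed identities $\mu=\frac19\,\Delta_1/[y^{(2)}]^{8/3}$ and $\mu_\alpha=\frac1{27}\,\Delta_2/[y^{(2)}]^4$, which hold as identities of differential functions on the Zariski-open set $\{y^{(2)}\neq0\}$. Part~(1) is immediate: since $\mathcal X$ is irreducible and not a vertical line, each jet coordinate $y^{(k)}_{\mathcal X}$ is a rational function on $\mathcal X$ (obtained by implicit differentiation of $F=0$, as recalled just before Definition~\ref{def-cjet}), and $\Delta_1$, $\Delta_2$ are \emph{polynomials} in the $y^{(k)}$, so their restrictions $\Delta_1|_{\mathcal X}$, $\Delta_2|_{\mathcal X}$ are rational functions on $\mathcal X$.

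Next I would prove the two vanishing equivalences. If $\mathcal X$ is a (non-vertical) line $y=mx+b$, then $y^{(k)}_{\mathcal X}\equiv0$ for all $k\geq2$, so both $\Delta_1|_{\mathcal X}$ and $\Delta_2|_{\mathcal X}$ vanish identically. If $\mathcal X$ is not a line, then $\kappa|_{\mathcal X}\not\equiv0$ by Proposition~\ref{prop-lines-conics}(1), hence $y^{(2)}_{\mathcal X}\not\equiv0$ by \eqref{eq-kappa}; thus on the dense open subset of $\mathcal X$ where $y^{(2)}_{\mathcal X}\neq0$ the two identities restrict to $\mu|_{\mathcal X}=\frac19\Delta_1|_{\mathcal X}/[y^{(2)}_{\mathcal X}]^{8/3}$ and $\mu_\alpha|_{\mathcal X}=\frac1{27}\Delta_2|_{\mathcal X}/[y^{(2)}_{\mathcal X}]^4$ with nonvanishing denominators, so $\Delta_1|_{\mathcal X}\equiv0\iff\mu|_{\mathcal X}\equiv0$ and $\Delta_2|_{\mathcal X}\equiv0\iff\mu_\alpha|_{\mathcal X}\equiv0$. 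Combining this with parts~(2) and~(3) of Proposition~\ref{prop-lines-conics} (and with the line case above) gives exactly the stated equivalences: $\Delta_1|_{\mathcal X}\equiv0$ iff $\mathcal X$ is a line or a parabola, and $\Delta_2|_{\mathcal X}\equiv0$ iff $\mathcal X$ is a line or a conic.

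For the remaining ``otherwise'' assertions I would trace the denominators of the invariants. Substituting the two identities, together with $d\alpha=[y^{(2)}]^{1/3}\,dx$ (so $\frac{d}{d\alpha}=[y^{(2)}]^{-1/3}\frac{d}{dx}$) and $d\rho=\mu_\alpha^{1/3}\,d\alpha$, into the definitions \eqref{aff-inv} and \eqref{proj-inv} and simplifying, one checks that all fractional powers of $y^{(2)}$ cancel and that the denominator of each of $K_{\mathcal A}$, $T_{\mathcal A}$ is a power of $\Delta_1$, whereas the denominator of each of $K_{\mathcal P}$, $T_{\mathcal P}$ is a power of $\Delta_2$; equivalently, one simply reads this off from the explicit formulae \eqref{eq-kta}, \eqref{eq-kp}, \eqref{eq-tp} of the Appendix. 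Hence, if $\mathcal X$ is not a line or a parabola then $\Delta_1|_{\mathcal X}\not\equiv0$ by the equivalence established above, so the compositions $K_{\mathcal A}\circ j^5_{\mathcal X}$ and $T_{\mathcal A}\circ j^6_{\mathcal X}$ are defined on a Zariski-dense open subset of $\mathcal X$ and are therefore rational functions on $\mathcal X$; and if $\mathcal X$ is not a line or a conic then $\Delta_2|_{\mathcal X}\not\equiv0$, which in the same way makes $K_{\mathcal P}|_{\mathcal X}$ and $T_{\mathcal P}|_{\mathcal X}$ rational functions on $\mathcal X$.

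The main obstacle is the bookkeeping with real fractional powers. One must make sure that $[y^{(2)}]^{8/3}$, $\mu_\alpha^{8/3}$, $\mu_\alpha^{1/3}$ and similar expressions are single-valued and nonvanishing on the relevant real Zariski-open sets, so that the statements $\mu|_{\mathcal X}\equiv0$ and $\Delta_1|_{\mathcal X}\equiv0$ (and likewise for $\mu_\alpha$, $\Delta_2$) really do transfer to one another; and one must exploit the fact that, although $\mu$ and $\eta$ are themselves not rational, the particular combinations \eqref{aff-inv} and \eqref{proj-inv} \emph{are}, with every cube root of $y^{(2)}$ and of $\Delta_2$ cancelling so that only \emph{integer} powers of $\Delta_1$ (respectively $\Delta_2$) survive in the denominators --- this cancellation is precisely what makes ``$\Delta_1|_{\mathcal X}$ not a zero function'' and ``$\Delta_2|_{\mathcal X}$ not a zero function'' the right hypotheses. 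A minor point to keep in mind is not to invoke $K_{\mathcal A}=(\mu_\alpha)^2/\mu^3$ when $\mathcal X$ is a parabola (where $\mu|_{\mathcal X}\equiv0$), but that case is excluded from the ``otherwise'' clauses.
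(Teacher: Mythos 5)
Your proposal is correct and follows essentially the same route as the paper: the paper derives Corollary~\ref{cor-delta} precisely from Proposition~\ref{prop-lines-conics} together with the identities $\mu=\frac19\Delta_1/[y^{(2)}]^{8/3}$ and $\mu_\alpha=\frac1{27}\Delta_2/[y^{(2)}]^4$, leaving the details implicit. Your elaboration --- handling the line case separately, using $\kappa\not\equiv0\Rightarrow y^{(2)}\not\equiv0$ to transfer the vanishing conditions, and reading off from~\eqref{eq-kta},~\eqref{eq-kp},~\eqref{eq-tp} that the denominators are integer powers of $\Delta_1$ (respectively $\Delta_2$) --- is exactly the intended argument.
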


\subsection{Signatures of rational curves}
\label{ssec-rat-sig}
In this section, we adapt the signature constructions to \emph{rational} algebraic curves and give
examp\-les of solving the affine and projective equivalence problems using signatures.
We adapt Definition~\ref{def-restr} to rational curves as follows.
Let ${\mathcal X}$ is a~rational curve parameterized by $\gamma(t)=(x(t),y(t))$, such that $x(t)$
is not a~constant function\footnote{Equivalently, ${\mathcal X}$ is not a~vertical line.}. Make
a~recursive definition of the following rational functions of $t$:
\begin{gather}\label{eq-ykt}
y^{(1)}=\frac{\dot y}{\dot x}
\qquad
,\dots,
\qquad
y^{(k)}=\frac{\dot{y^{(k-1)}}}{\dot x},
\end{gather}
where $\dot{\phantom{a}}$ \emph{denotes the derivative with respect to the parameter.} Let $\Phi$
be a~rational differential function.
Then the restriction of~$\Phi|_\gamma$ is computed by substituting~\eqref{eq-ykt} into~$\Phi$.
If defined, $\Phi|_\gamma$ is a~rational function of~$t$.
Recalling Definition~\ref{def-sig} of signature and Corollary~\ref{cor-delta}, we conclude that:
\begin{proposition}
Let ${\mathcal X}$ be an irreducible planar algebraic curve parameterized by a~rational map~$\gamma(t)$.
Assume $\Delta_2|_\gamma\underset{\mathbb{R}(t)}\neq0$, where $\Delta_2$ is given by~\eqref{eq-D2}.
Then
\begin{enumerate}\itemsep=0pt \item[$(1)$] the ${\mathcal A}(2)$-signature of ${\mathcal X}$ is the image of
a~rational map ${S}_{\mathcal A}|_\gamma(t):=\left(K_{\mathcal A}|_\gamma,T_{\mathcal
A}|_\gamma\right)$ from $\mathbb{R}$ to~$\mathbb{R}^2$.
\item[$(2)$] the ${\mathcal{PGL}}(3)$-signature of ${\mathcal X}$ is the image of a~rational map
${S}_{\mathcal P}|_\gamma(t):=\left(K_{\mathcal P}|_\gamma,T_{\mathcal P}|_\gamma\right)$ from
$\mathbb{R}$ to~$\mathbb{R}^2$.
\end{enumerate}
\end{proposition}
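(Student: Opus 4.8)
The plan is to deduce the statement from results already established---Corollary~\ref{cor-delta}, the proposition identifying ${\mathcal I}_{\mathcal A}$- and ${\mathcal I}_{\mathcal{PGL}}$-exceptional curves as precisely the lines and conics, and Definition~\ref{def-sig} of the signature---after first recording the compatibility between the two notions of ``restriction'' in play. First I would note the elementary chain-rule identity: along the curve ${\mathcal X}$ parameterized by $\gamma$, total differentiation in $x$ and in $t$ are related by $\frac{d}{dx}=\frac1{\dot x}\frac{d}{dt}$, so a straightforward induction on $k$ shows that the rational functions $y^{(k)}$ of $t$ defined recursively in~\eqref{eq-ykt} satisfy $y^{(k)}=y^{(k)}_{\mathcal X}\circ\gamma$; equivalently $j^n_{\mathcal X}\circ\gamma=(x(t),y(t),y^{(1)},\dots,y^{(n)})$. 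Hence, for every rational differential function $\Phi$, the function $\Phi|_\gamma$ obtained by substituting~\eqref{eq-ykt} equals the composition $\Phi|_{\mathcal X}\circ\gamma$.

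Next I would feed in the hypothesis. Since $\Delta_2|_\gamma=\Delta_2|_{\mathcal X}\circ\gamma$, the assumption $\Delta_2|_\gamma\neq0$ in $\mathbb{R}(t)$ forces $\Delta_2|_{\mathcal X}$ to be a nonzero rational function on ${\mathcal X}$; by Corollary~\ref{cor-delta}(3), ${\mathcal X}$ is then neither a line nor a conic, and since every line or parabola is in particular a line or a conic, Corollary~\ref{cor-delta}(2) also gives $\Delta_1|_{\mathcal X}\neq0$. Consequently all four restrictions $K_{\mathcal A}|_{\mathcal X}$, $T_{\mathcal A}|_{\mathcal X}$, $K_{\mathcal P}|_{\mathcal X}$, $T_{\mathcal P}|_{\mathcal X}$ are genuine rational functions on ${\mathcal X}$, and, by the proposition on exceptional curves, ${\mathcal X}$ is non-exceptional with respect to both ${\mathcal I}_{\mathcal A}$ and ${\mathcal I}_{\mathcal{PGL}}$; hence the affine and projective signatures of ${\mathcal X}$ are defined, in the sense of Definition~\ref{def-sig}, as the images of the rational maps ${S}|_{\mathcal X}=(K|_{\mathcal X},T|_{\mathcal X})\colon{\mathcal X}-\to\mathbb{R}^2$. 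Composing with $\gamma$ and using $\Phi|_\gamma=\Phi|_{\mathcal X}\circ\gamma$ once more, $K_{\mathcal A}|_\gamma,T_{\mathcal A}|_\gamma,K_{\mathcal P}|_\gamma,T_{\mathcal P}|_\gamma$ are rational functions of $t$, so ${S}_{\mathcal A}|_\gamma$ and ${S}_{\mathcal P}|_\gamma$ are rational maps $\mathbb{R}-\to\mathbb{R}^2$.

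It then remains to identify their images with the signatures. Since ${S}|_\gamma={S}|_{\mathcal X}\circ\gamma$, the image of ${S}|_\gamma$ is contained in the signature ${\mathcal S}_{\mathcal X}$; conversely, a rational parameterization of a rational curve is surjective onto ${\mathcal X}$ outside a finite subset, so the image of ${S}|_\gamma$ and ${\mathcal S}_{\mathcal X}$ differ by at most finitely many points and, in particular, have the same Zariski closure---which by Lemma~\ref{lemma-sigv} is a point when $K|_{\mathcal X},T|_{\mathcal X}$ are constant and the irreducible signature curve ${\hat S}_{\mathcal X}(\varkappa,\tau)=0$ otherwise. The only steps that require care are the inductive verification that $j^n_{\mathcal X}\circ\gamma$ is given by~\eqref{eq-ykt} and the standard fact that $\gamma$ misses only finitely many points of ${\mathcal X}$; neither is a genuine obstacle, so the real content of the proposition is the bookkeeping that turns Corollary~\ref{cor-delta} and Definition~\ref{def-sig} into a statement about rational functions of the single parameter $t$.
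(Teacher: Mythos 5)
Your proof is correct and follows exactly the route the paper intends: the paper gives no explicit argument for this proposition, simply asserting it by ``recalling Definition~\ref{def-sig} of signature and Corollary~\ref{cor-delta}.'' Your write-up correctly supplies the two details left implicit there --- the chain-rule verification that substituting~\eqref{eq-ykt} computes $y^{(k)}_{{\mathcal X}}\circ\gamma$, so that $\Phi|_\gamma=\Phi|_{{\mathcal X}}\circ\gamma$ and the hypothesis $\Delta_2|_\gamma\neq 0$ rules out lines and conics via Corollary~\ref{cor-delta}, and the fact that $\gamma$ misses only finitely many points of ${\mathcal X}$, so the image of ${S}|_\gamma$ agrees with the signature up to the finitely many points on which rational maps are undefined anyway.
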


The signatures can be computed either using inductive formulae
for invariants given by~\eqref{aff-inv} and~\eqref{proj-inv}, or explicit formulae given
by~\eqref{eq-kta} and~\eqref{eq-kp},~\eqref{eq-tp} in Appendix~\ref{appendix}.
A~solution for ${\mathcal A}(2)$- and ${\mathcal{PGL}}(3)$-equivalence problems for rational curves
follows from Theorem~\ref{th-main-sig} and is illustrated by the following examples.
Invariants and signatures were computed using Maple code~\cite{maple-code}.

\begin{example}[${\mathcal{PGL}}(3)$-equivalence problems]
\label{ex-psig}
Consider three rational cubics pictured on Fig.~\ref{fig-cubics}.
\begin{figure}[t]
\centering \includegraphics[scale=.276]{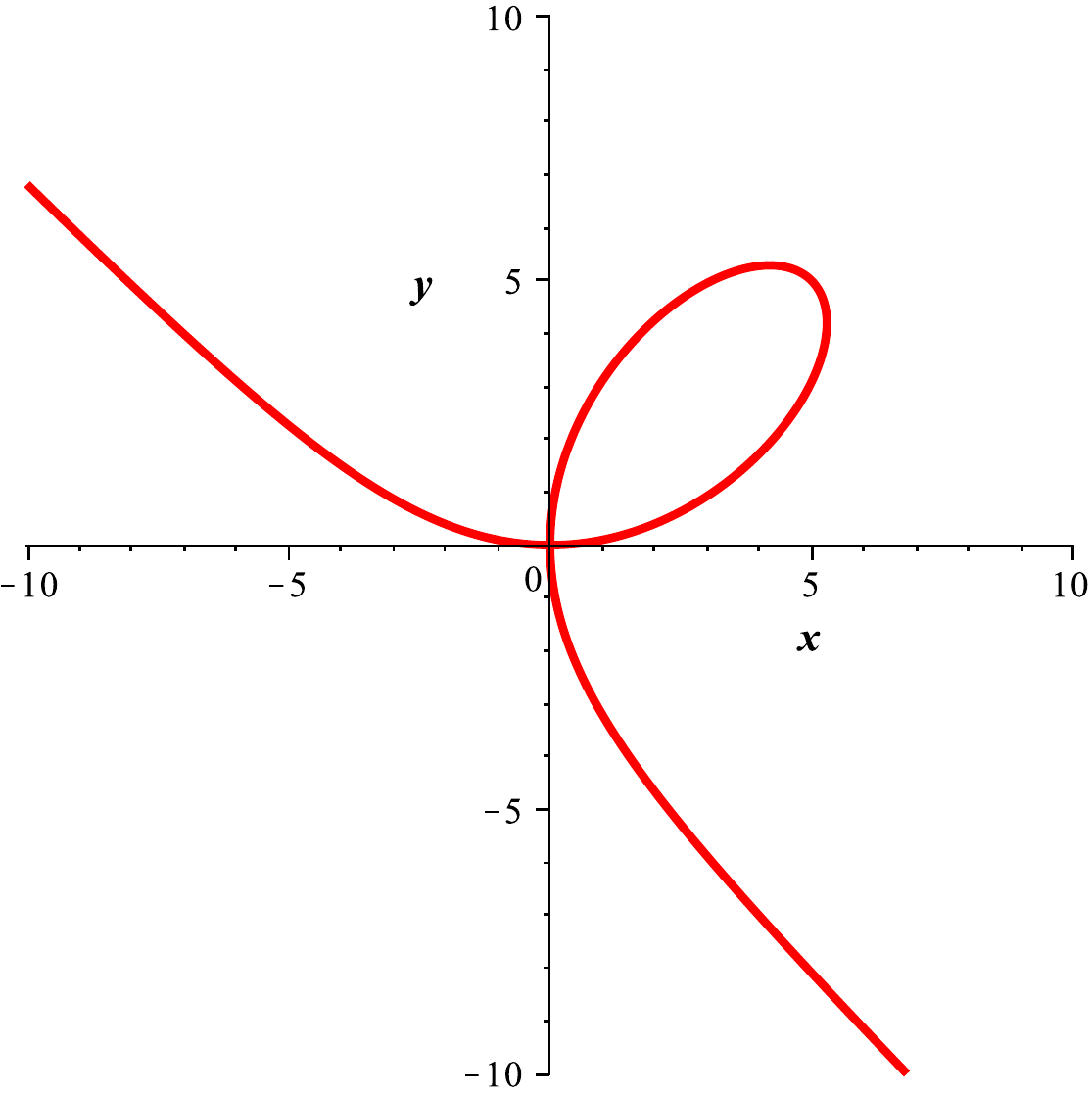} \includegraphics[scale=.26]{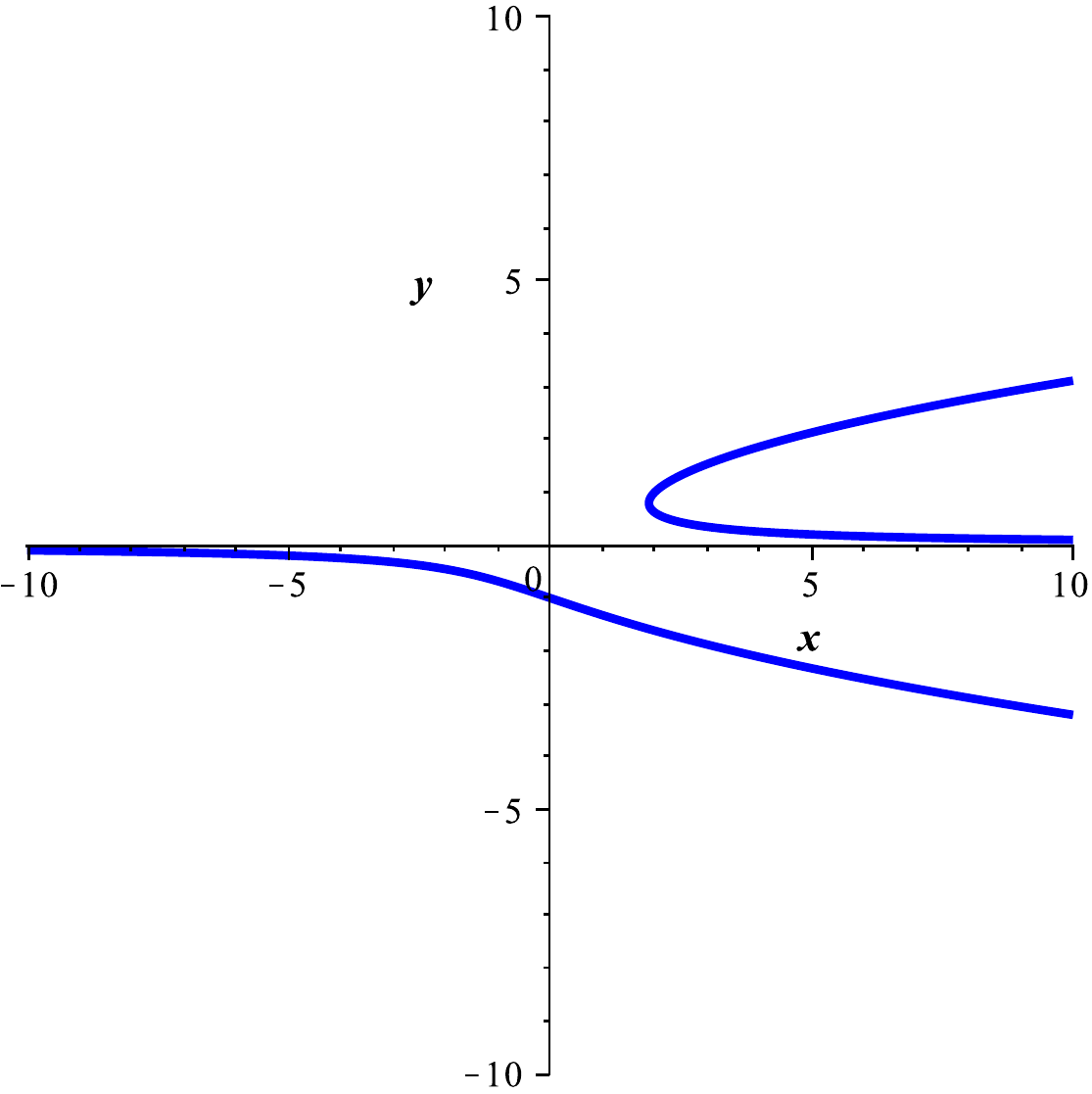}
\includegraphics[scale=.26]{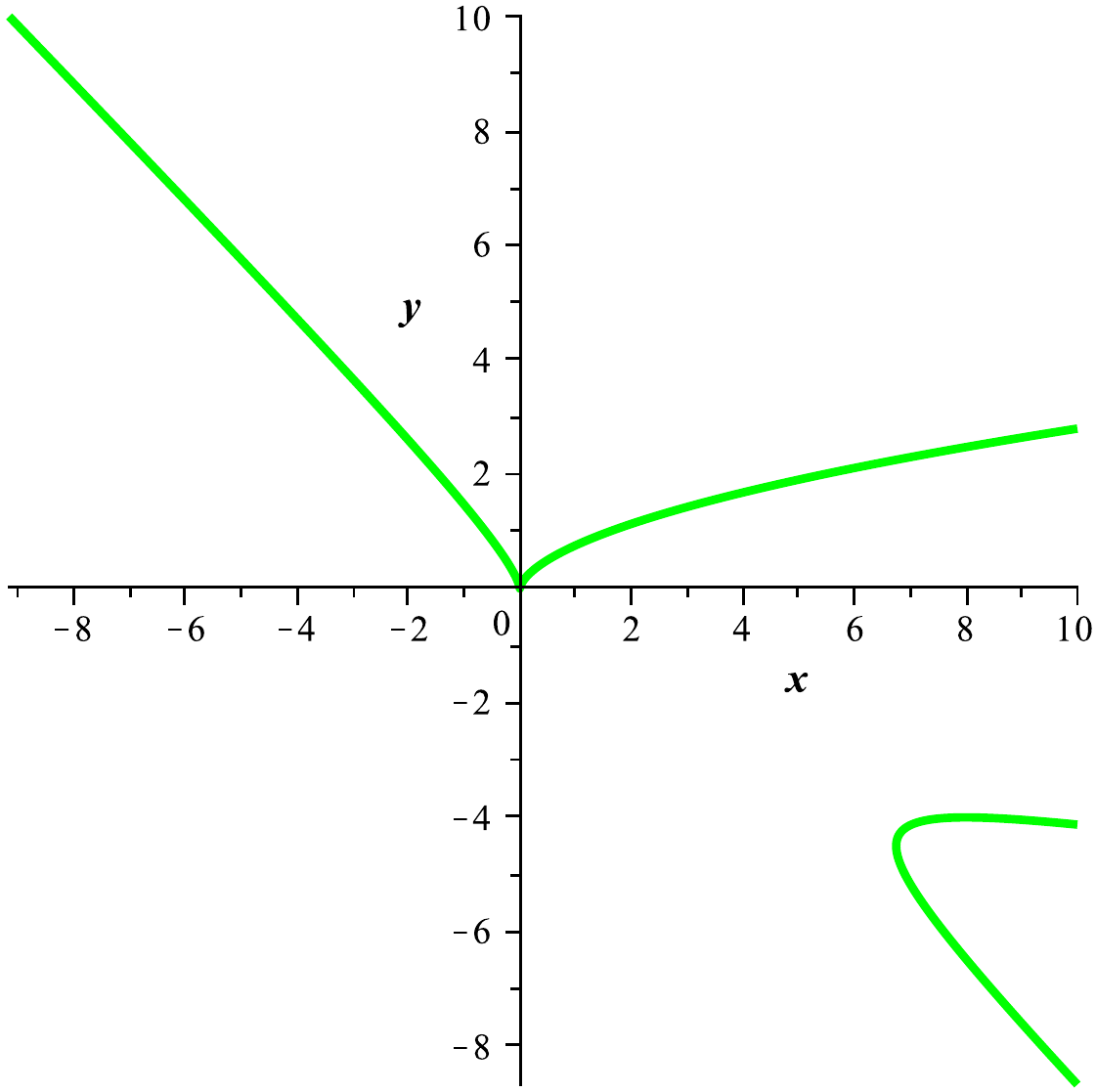}
\caption{\red{${\mathcal X}_1$}: $x^3+y^3-10 x y=0$, \ \blue{${\mathcal X}_2$}: $y^3-x y+1=0$, \
{$\viol{{\mathcal X}_3}$}: $y^3-x^2+x y^2=0$.}
\label{fig-cubics}
\end{figure}
These curves have the following rational parameterizations:
\begin{gather}
\label{eq-alpha}
\red{{\mathcal X}_1}\ \text{is parameterized by}\ \alpha(t)=
\left(\frac{10 t}{t^3+1}, \frac{10 t^2}{t^3+1}\right);
\\
\label{eq-beta}
\blue{{\mathcal X}_2}\ \text{is parameterized by}\ \beta(s)=
\left(\frac{s^3+3 s^2+3 s+2}{s+1}, s+1\right);
\\
\label{eq-gamma}
\viol{{\mathcal X}_3}\ \text{is parameterized by} \ \gamma(w)=
\left(\frac{w^3}{w+1},\frac{w^2}{w+1}\right).
\end{gather}
The projective signature of $\red{{\mathcal X}_1}$ is parameterized by invariants
\begin{gather}
\label{eq-sig-alpha}
K_{\mathcal P}|_{\alpha}(t)=-\frac{9261}{50} \frac{(t^6-t^3+1)^3 t^3}{(t^3-1)^8},
\qquad
T_{\mathcal P}|_{\alpha}(t)=-\frac{21}{10} \frac{(t^3+1)^4}{(t^3-1)^4},
\end{gather}
while the signature of $\blue{{\mathcal X}_2}$ is parameterized by invariants
\begin{gather}
K_{\mathcal P}|_{\beta}(s)
=-\frac{9261}{50} \frac{(s+1)^3 (s^6+6 s^5+15 s^4+19 s^3+12 s^2+3 s+1)^3}{(s^2+3 s+3)^8 s^8},\nonumber
\\
T_{\mathcal P}|_\beta(s)=-\frac{21}{10} \frac{(s^3+3 s^2+3 s+2)^4}{(s^2+3 s+3)^4 s^4}.\label{eq-sig-beta}
\end{gather}
Although it is not obvious, the curves defined by parameterizations~\eqref{eq-sig-alpha}
and~\eqref{eq-sig-beta} satisfy the same implicit equation
\begin{gather}
62523502209+39697461720 \tau-6401203200 \varkappa+5250987000 \tau^2-2032128000 \varkappa \tau
\nonumber\\
\qquad{} +163840000 \varkappa^2+259308000 \tau^3+53760000 \varkappa \tau^2+4410000 \tau^4=0.\label{eq-alpha-beta-sige}
\end{gather}
This is a~sufficient condition for the equality of signatures ${\mathcal S}_{{\mathcal X}_1}$
and ${\mathcal S}_{{\mathcal X}_2}$ over complex numbers, but it is not sufficient over reals.
We can look for a~\emph{real} rational reparameterization $t=\phi(s)$ by solving a~system of two
equations $K_{\mathcal P}|_\alpha(t)=K_{\mathcal P}|_\beta(s)$ and $T_{\mathcal
P}|_\alpha(t)=T_{\mathcal P}|_\beta(s)$ for $t$ in terms of $s$.
One can check that $t=s+1$ provides a~desired reparameterization.
Thus ${\mathcal S}_{{\mathcal X}_1}={\mathcal S}_{{\mathcal X}_2}$ and hence, by
Theorem~\ref{th-main-sig}
\begin{gather*}
\red{{\mathcal X}_1}\underset{{\mathcal{PGL}}(3)}{\cong}\blue{{\mathcal X}_2}.
\end{gather*}
Reparameterization $t=s+1$ allows us to find pairs of points on ${\mathcal X}_1$ and ${\mathcal
X}_2$ which can be transformed to each other by ${\mathcal{PGL}}(3)$-transformation that brings
${\mathcal X}_1$ to ${\mathcal X}_2$.
Since four of such pairs in generic position uniquely determines a~transformation we can compute
that $\blue{{\mathcal X}_2}$ can be transformed to $\red{{\mathcal X}_1}$ by a~transformation
\begin{gather}
\label{eq-t21}
\bar x=\frac{10 y}x,
\qquad
\bar y=\frac{10}x.
\end{gather}

It turns out that cubic $\viol{{\mathcal X}_3}$ has constant ${\mathcal{PGL}}(3)$-invariants
\begin{gather}
\label{eq-sig-gamma}
K_{\mathcal P}|_\gamma(w)=\frac{250047}{12800}\qquad \text{and} \qquad T_{\mathcal P}|_\gamma(w)=0
\end{gather}
and therefore its signature degenerates to a~point.
Thus, by Theorem~\ref{th-main-sig}, $\viol{{\mathcal X}_3}$ is not ${\mathcal{PGL}}(3)$-equivalent
to either \red{${\mathcal X}_1$} or \blue{${\mathcal X}_2$}.

To underscore the difference between the solution of the equivalence problems over real and over
complex numbers, we will consider one more cubic $\brown{{\mathcal X}_4}$ pictured on
Fig.~\ref{fig-delta}, whose rational parameterization is given by
\begin{gather*}
\delta(u)=\left(\frac{u^3+1}{u},
\frac{u^2+1}{u}\right).
\end{gather*}
\begin{figure}[t]
\centering \includegraphics[scale=.25]{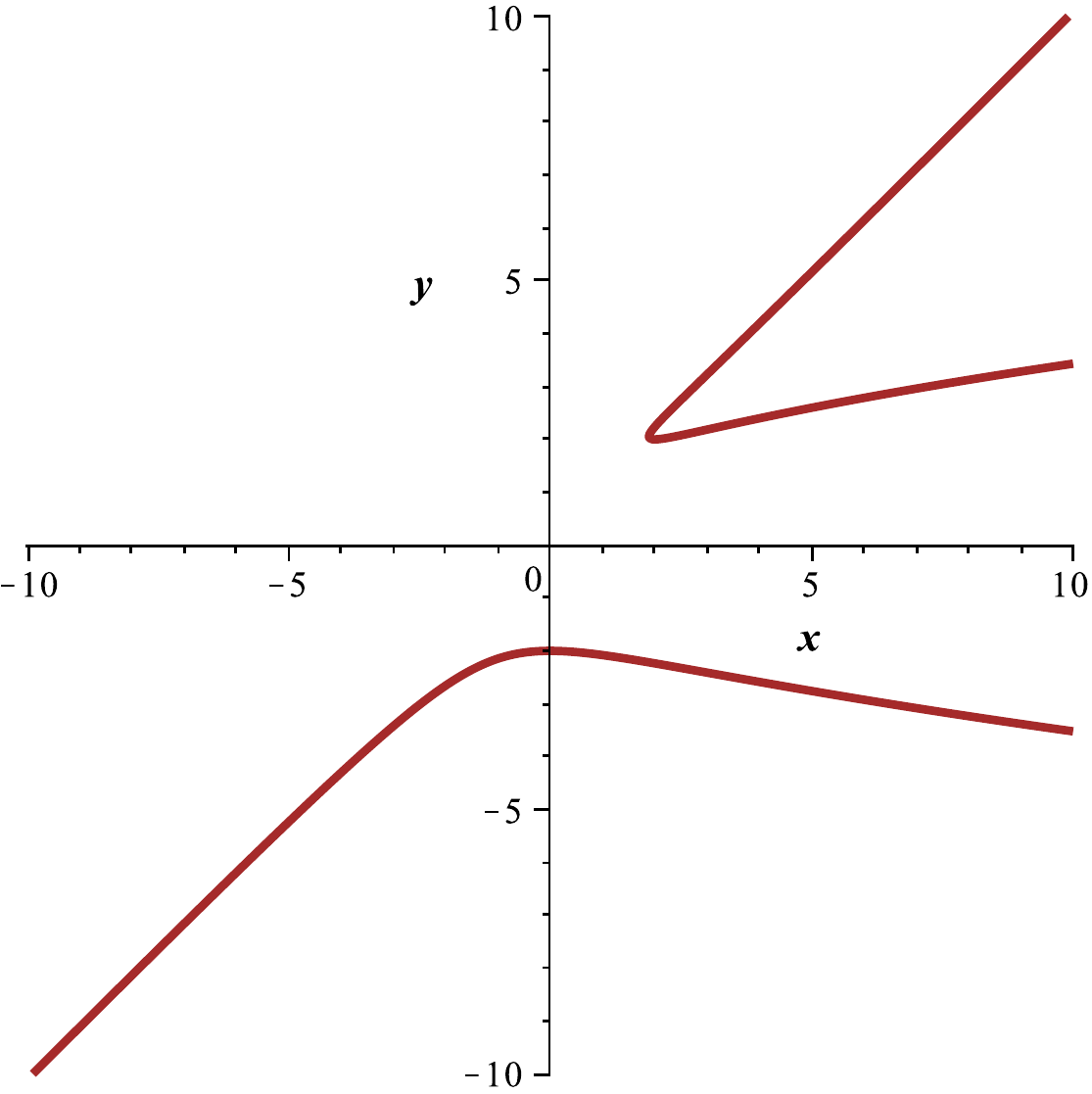}
\caption{\brown{${\mathcal X}_4$}: $y^3-x y^2+x^2-x y+2 x-3 y+2=0$.}
\label{fig-delta}
\end{figure}

The signature of $\brown{{\mathcal X}_4}$ is parameterized by invariants
\begin{gather*}%\label{eq-sig-delta}
K_{\mathcal P}|_{\delta}(u)
=-\frac{6751269}{50} \frac{(u-1)^3 (u^2-u+1)^3 (u^3-3 u^2+1)^3 u^3}{(u^3-6 u^2+3 u+1)^8},
\\
T_{\mathcal P}|_\delta(u)=-\frac{189}{10} \frac{(u^3+1-3 u)^4}{(u^3-6 u^2+3 u+1)^4}.
\end{gather*}
Invariants $K_{\mathcal P}|_{\delta}$ and $T_{\mathcal P}|_{\delta}$ satisfy the implicit
equation~\eqref{eq-alpha-beta-sige}.
Since the signatures of~$\red{{\mathcal X}_1}$ and~$\blue{{\mathcal X}_2}$ satisfy the same
implicit equation, we can conclude that, over \emph{the complex numbers}, $\brown{{\mathcal X}_4}$
is projectively equivalent to both $\red{{\mathcal X}_1}$ and $\blue{{\mathcal X}_2}$.
In fact, we can find that for
\begin{gather*}
r=\left(\frac12-\frac12\sqrt{3} i\right)^{1/3}
\end{gather*}
(where we are free to choose any of the three cubic roots) the complex projective transformation
\begin{gather*}
\bar x=
\frac{-6 i \sqrt{3} r y+3 r^2 \big(i \sqrt{3}-3\big)}{2 x-2 i \sqrt{3} r y+r^2 \big(i \sqrt{3}-3\big)},
\qquad
\bar y=
\frac{2 x-r \big(5 i \sqrt{3}+3\big) y+r^2 \big(i \sqrt{3}-9\big)}{2 x-2 i \sqrt{3} r y+r^2 \big(i \sqrt{3}-3\big)}
\end{gather*}
transforms $\brown{{\mathcal X}_4}$ to $\blue{{\mathcal X}_2}$.
Our attempt to solve $T_{\mathcal P}|_\delta(u)=T_{\mathcal P}|_\beta(s)$ and $T_{\mathcal
P}|_\delta(u)=T_{\mathcal P}|_\beta(s)$ for~$u$ in terms of~$s$ gives a~rather involved rational
complex reparameterization that transforms signature map of $\delta$ into the signature map of~$\beta$, but no real reparameterization.
Therefore
\begin{gather*}
\brown{{\mathcal X}_4}\underset{{{\mathcal{PGL}}(3,\mathbb{C})}}\cong\blue{{\mathcal X}_2}
\qquad \text{but} \qquad \brown{{\mathcal X}_4}\underset{{{\mathcal{PGL}}(3,\mathbb{R})}}\ncong\blue{{\mathcal X}_2}.
\end{gather*}
\end{example}

\begin{example}[${\mathcal A}(2)$-equivalence problems] We can again consider three
cubics pictured on Fig.~\ref{fig-cubics}, but now ask if they are ${\mathcal A}(2)$-equivalent.
Recalling that ${\mathcal A}(2)$ is a~subgroup of ${\mathcal{PGL}}(3)$ we can immediately conclude
from the previous example that $\viol{{\mathcal X}_3}$ is not ${\mathcal A}(2)$-equivalent to
\mbox{either}~\red{${\mathcal X}_1$} or~\blue{${\mathcal X}_2$}.
To resolve the equivalence problem for \red{${\mathcal X}_1$} and \blue{${\mathcal X}_2$} we need
to compute their \emph{affine} signatures.
The affine signatures of $\red{{\mathcal X}_1}$ is parameterized by invariants
\begin{gather*}%\label{eq-sig-alpha-aff}
K_{\mathcal A}|_{\alpha}(t)=-\frac{1}{20} \frac{(t^3-1)^2}{t^3},
\qquad
T_{\mathcal A}|_{\alpha}(t)=-\frac{1}{10},
\end{gather*}
It turns out, that restrictions of both invariants, $K_{\mathcal A}|_{\beta}(s)$ and $T_{\mathcal
A}|_{\beta}(s)$, are non-constant functions of~$s$.
Hence, \red{${\mathcal X}_1$} and \blue{${\mathcal X}_2$} have different affine signatures.
Therefore
\begin{gather*}
\red{{\mathcal X}_1}\underset{{\mathcal A}(2)}\ncong\blue{{\mathcal X}_2}.
\end{gather*}
In fact, affine signatures for all four curves $\red{{\mathcal X}_1}$, $\blue{{\mathcal X}_2}$,
$\viol{{\mathcal X}_3}$ and $\brown{{\mathcal X}_4}$ have different implicit equations and
therefore no two of them are affine equivalent neither over real numbers, nor over complex
numbers.
\end{example}

\section{Algorithm and examples}
\label{algorithms}

The algorithms for solving projection problems are based on a~combination of the projection
criteria of Section~\ref{criteria} and the group equivalence criterion of Section~\ref{group-equiv}.

\subsection{Central projections} The following algorithm is based on the central projection
criterion stated in Theorem~\ref{main-finite-camera} and the group-equivalence criterion stated
in Theorem~\ref{th-main-sig}.
In the algorithm, we compute restrictions of differential functions $\Delta_2$, $K_{\mathcal P}$
and $T_{\mathcal P}$ to a~curve parameterized by $\gamma(t)$ and to a~family of curves
parameterized by $\epsilon(c,s)$, where $c=(c_1,c_2,c_3)$ determines a~member of the family and $s$
serves to parameterize a~curve in the family.
These restrictions are computed by substitution of~\eqref{eq-ykt} into formula~\eqref{eq-D2} for~$\Delta_2$ and into~\eqref{eq-kp} and~\eqref{eq-tp} for $K_{\mathcal P}$ and $T_{\mathcal P}$,
respectively.
When the restrictions to~$\epsilon(c,s)$ are computed, derivatives in~\eqref{eq-ykt} are taken with
respect to~$s$.

One can use general real quantifier elimination packages, such as {\sc Reduce} package in {\sc
Mathe\-ma\-tica}, to perform the steps involving real quantifier elimination problems.
To make an efficient implementation, one needs to take into account specifics of the problems
at hand.
This lies outside of the scope of the current paper and is a~subject of our future work.
\begin{algorithm}[central projections]\label{alg-finite}\qquad

{\sf Input:} Parameterizations
$\Gamma=\big(z_1,z_2,z_3)\in\mathbb{Q}(s)^3$ and $\gamma=\left(x,\,y\right)\in\mathbb{Q}(t)^2$ of
rational algebraic curves ${\mathcal Z}\subset\mathbb{R}^3$ and ${\mathcal X}\subset\mathbb{R}^2$,
respectively, such that ${\mathcal Z}$ is not a~line, that is, $\dot\Gamma\times\ddot\Gamma\neq0$.

{\sf Output:} The truth of the statement:
\begin{gather*}
\exists\, [P]\in\fp, \quad \text{such that} \ {\mathcal X}=\overline{P({\mathcal Z})}.
\end{gather*}

 {\sf Steps}:
\begin{enumerate}\itemsep=0pt
\item\label{st-line}
[{\sf If ${\mathcal X}$ is a~line, then determine whether ${\mathcal Z}$ is coplanar.}]
\\
 if $\left|
\begin{matrix}\dot\gamma
\\
\ddot\gamma
\end{matrix}
\right|\underset{\mathbb{Q}(t)}=0$, then return the truth of the statement $\left|
\begin{matrix}\dot\Gamma
\\
\ddot\Gamma
\\
\dddot\Gamma
\end{matrix}
\right|\underset{\mathbb{Q}(s)}=0.$
\item
\label{st-epsilon}
[{\sf Describe a~family of parametric curves where $c=(c_1,c_2,c_3)$ specifies a~member of the family.}]
\\
$\epsilon:=\left(\frac{z_1+c_1}{z_3+c_3}, \frac{z_2+c_2}{z_3+c_3}\right)\in\mathbb{Q}(c_1,c_2,c_3,s)^2$.
\item
\label{st-computeD}
[{\sf Compute the denominators of the rational invariants.}]
\\
compute $\Delta_2|_\gamma$ and $\Delta_2|_{\epsilon}$, using the formula~\eqref{D2} in Appendix~\ref{appendix}.
\item
\label{st-conic}
[{\sf If ${\mathcal X}$ is a~conic, then determine whether, for some $c$, the rational map
$\epsilon(c,s)$ parameterizes a~conic.}]
\\
if $\Delta_2|_\gamma\underset{\mathbb{Q}(t)}=0$, then return the truth of the statement
\begin{gather}
\exists\, c\in\mathbb{R}^3
\qquad
z_3+c_3\underset{\mathbb{R}(s)}\neq0\,\wedge\,\left|
\begin{matrix}\dot\epsilon
\\
\ddot\epsilon
\end{matrix}
\right|\underset{\mathbb{R}(s)}\neq0\,\wedge\,\Delta_2\big |_{\epsilon}\underset{\mathbb{R}(s)}=0.
\end{gather}
\item
\label{st-notconic}
[{\sf If ${\mathcal Z}$ cannot be projected to a~curve of degree greater than 2, then return {\sc
false.}}]
\\
if $\Delta_2|_\epsilon\underset{\mathbb{R}(c,s)}=0$, then return {\sc false}.
\item
\label{st-computeKT}
[{\sf Compute the rational invariants.}]
\\
compute $K_{\mathcal P}|_\gamma$, $K_{\mathcal P}|_{\epsilon}$, $T_{\mathcal P}|_\gamma$ and $T_{\mathcal
P}|_{\epsilon}$ using the formulae~\eqref{eq-kp},~\eqref{eq-tp} in Appendix~\ref{appendix}.

\item
\label{st-gen}
[{\sf Determine whether, for some $c$, the signature of the Zariski closure $\tilde{\mathcal Z}_c$
of the curve parameterized by $\epsilon(c,s)$ equals to the signature of ${\mathcal X}$.}]
\\
if $K_{\mathcal P}|_\gamma$ is a~constant rational function, then return the truth of the statement
\begin{gather}
\label{st4c1}
\exists\,c\in\mathbb{R}^3:
\quad
c~\text{is generic} \ \wedge K_{\mathcal P}|_{\epsilon}\underset{\mathbb{R}(s)}=
K_{\mathcal P}|_\gamma,
\end{gather}
else return the truth of the statement
\begin{gather}
\nonumber
\exists\,c\in\mathbb{R}^3:
\quad
c~\text{is generic} \ \wedge{K}_{\mathcal P}|_{\epsilon}\text{ is not a constant rational function},
\\ \label{st4c2}
\wedge
\
\forall\, s\in\mathbb{R}
\quad
\Delta_2|_{\epsilon}\underset{\mathbb{R}}\neq0
\quad
\Rightarrow
\quad
\exists\, t\in\mathbb{R}
\quad
K_{\mathcal P}|_{\epsilon}\underset{\mathbb{R}}=
K_{\mathcal P}|_\gamma\,\wedge\,T_{\mathcal P}|_{\epsilon}\underset{\mathbb{R}}=
T_{\mathcal P}|_\gamma,
\end{gather}
where we define
\begin{gather}
\label{eq-gen}
\left[c~\text{is generic}\right]: \ \Longleftrightarrow\ \left[ z_3+c_3\underset{\mathbb{R}(s)}\neq0\,\wedge\,\left|
\begin{matrix}\dot\epsilon
\\
\ddot\epsilon
\end{matrix}
\right|\underset{\mathbb{R}(s)}\neq0\,\wedge\,\Delta_2\big|_{\epsilon}\underset{\mathbb{R}(s)}\neq0\right].
\end{gather}
\end{enumerate} \end{algorithm}

\begin{proof}[Proof of Algorithm~\ref{alg-finite}.]
On the first step of Algorithm~\ref{alg-finite}, we consider the case when~${\mathcal X}$ is
a~line.
Then~${\mathcal Z}$ can be projected to~${\mathcal X}$ if and only if~${\mathcal Z}$ is coplanar.
Both conditions can be checked by computing determinants of certain matrices.
If ${\mathcal X}$ is not a~line we define, on Step~\ref{st-epsilon}, a~rational map $\epsilon$
that parameterizes a~family of curves.
On Step~\ref{st-computeD}, we compute restrictions of differential function~$\Delta_2$ to
$\gamma(t)$ and $\epsilon(c,s)$.
We remind the reader, that in the latter case derivatives are taken with respect to~$s$.
Since on this step we know that ${\mathcal X}$ is not a~line and there are values of~$c$ for which
$\tilde Z_c$ is not a~line, these restrictions are defined.
On Step~\ref{st-conic}, we consider the case when~${\mathcal X}$ is a~conic.
Equivalently, by Corollary~\ref{cor-delta}, $\Delta_2|_\gamma\underset{\mathbb{Q}(t)}=0$.
Then ${\mathcal Z}$ can be projected to~${\mathcal X}$ if and only if~$\exists\, c$ such that the
Zariski closure $\tilde Z_c$ of the curve parameterized by $\epsilon(c,s)$ is projectively
equivalent to ${\mathcal X}$ and therefore is a~conic (see Remark~\ref{rem-excep}).
Equivalently, $\Delta_2|_{\epsilon}\underset{\mathbb{R}(s)}=0$.
If~${\mathcal X}$ is not a~conic, we reach Step~\ref{st-notconic}, where we check for a~possibility
that~$\tilde Z_c$ is a~conic for all parameters~$c$ (equivalently
$\Delta_2|_\epsilon\underset{\mathbb{R}(c,s)}=0$) and therefore it can not be projected to~${\mathcal X}$, which at this step is known to have a~higher degree.
If that is not the case, we proceed to Step~\ref{st-computeKT}, where we compute restrictions of
differential invariants~$K_{\mathcal P}$ and~$T_{\mathcal P}$ to~$\gamma(t)$ and~$\epsilon(c,s)$.
Since on this step we know that~${\mathcal X}$ is of degree greater than~2 and there are values of~$c$ for which~$\tilde Z_c$ is of degree greater than~2, these restrictions are defined.

On Step~\ref{st-gen}, where we know that ${\mathcal X}$ is non-exceptional and decide if there
exists $c\in\mathbb{R}^3$ such that: (1) $\tilde Z_c$ is non-exceptional which is equivalent to
condition~\eqref{eq-gen}; (2) the signatures of the algebraic curves ${\mathcal X}$ and $\tilde
Z_c$ are the same.
On Step~\ref{st-computeKT}, we computed rational functions $K_{\mathcal P}|_{\epsilon}(c,s)$ and
$T_{\mathcal P}|_\epsilon(c,s)$.
We need to show that if we substitute a~specific value $c=c_0\in\mathbb{R}^3$ into these
functions we obtain the same rational functions of $s$ as we would obtain by computing restrictions
of the invariants to the curve parameterized by $\epsilon(c_0,s)$.
For generic values of $c$ defined by~\eqref{eq-gen}, we can show that this is true.
Indeed, it is well known that taking derivatives with respect to one of the variables and
specialization of other variables are commutative operations.
From condition~\eqref{eq-gen} it follows that the curve $\tilde Z_c$ is not a~line and so the
denominators of~\eqref{eq-ykt} are not annihilated by such specialization.
Therefore the restriction of jet variables to a~curve parameterized by $\epsilon(c,s)$ commutes
with a~specialization of $c$.
From condition~\eqref{eq-gen} it also follows that the denominators of~\eqref{eq-kp}
and~\eqref{eq-tp} are not annihilated by a~generic specialization.
Therefore, for a~generic $c_0$, rational functions $K_{\mathcal P}|_{\epsilon}(c_0,s)$ and
$T_{\mathcal P}|_\epsilon(c_0,s)$ equal to the restrictions of the invariants $K_{\mathcal P}$ and
$T_{\mathcal P}$ to $\epsilon(c_0,s)$.
To decide equality of signatures of the algebraic curve parameterized by $\gamma(t)$ and the curve
$\tilde Z_c$ we use Corollary~\ref{cor-cases} with~\eqref{st4c1} analyzing the case of constant
invariant $K_{\mathcal P}|_{\mathcal X}$ and~\eqref{st4c2} the case of non-constant\linebreak
invariant~$K_{\mathcal P}|_{\mathcal X}$.
\end{proof}
\begin{remark}[reconstruction]%\label{rem-reconstr}
If the output is {\sc true}, then, in many cases, in addition to establishing the existence of
$c_1$, $c_2$, $c_3$ in Step~\ref{st-conic} or~\ref{st-gen} of the Algorithm~\ref{alg-finite}, we can
find at least one of such triplets explicitly.
We then know that ${\mathcal Z}$ can be projected to ${\mathcal X}$ by a~projection centered at
$(-c_1,-c_2,-c_3)$.
We can also, in many cases, determine explicitly a~transformation $[A]\in{\mathcal{PGL}}(3)$ that
maps~${\mathcal X}$ to the Zariski closure $\tilde Z_c$ of the image of the map $\epsilon(c,s)$.
We then know that~${\mathcal Z}$ can be projected to~${\mathcal X}$ by the projection
$[P]=[A][\stpf][B]$, where~$\stpf$ and~$B$ are defined by~\eqref{eq-fPB}.
\end{remark}

\begin{example}
\label{ex-cp-cubics}
We would like to decide if the spatial curve ${\mathcal Z}$, pictured on Fig.~\ref{fig-twisted}
and parameteri\-zed~by{\samepage
\begin{gather}
\label{eq-twisted}
\Gamma(s)=\big(s^3, s^2, s\big),\qquad s\in\mathbb{R}
\end{gather}
projects to any of the four cubics planar cubics described in Example~\ref{ex-psig}.}

\begin{figure}[t] \centering
\includegraphics[scale=.40]{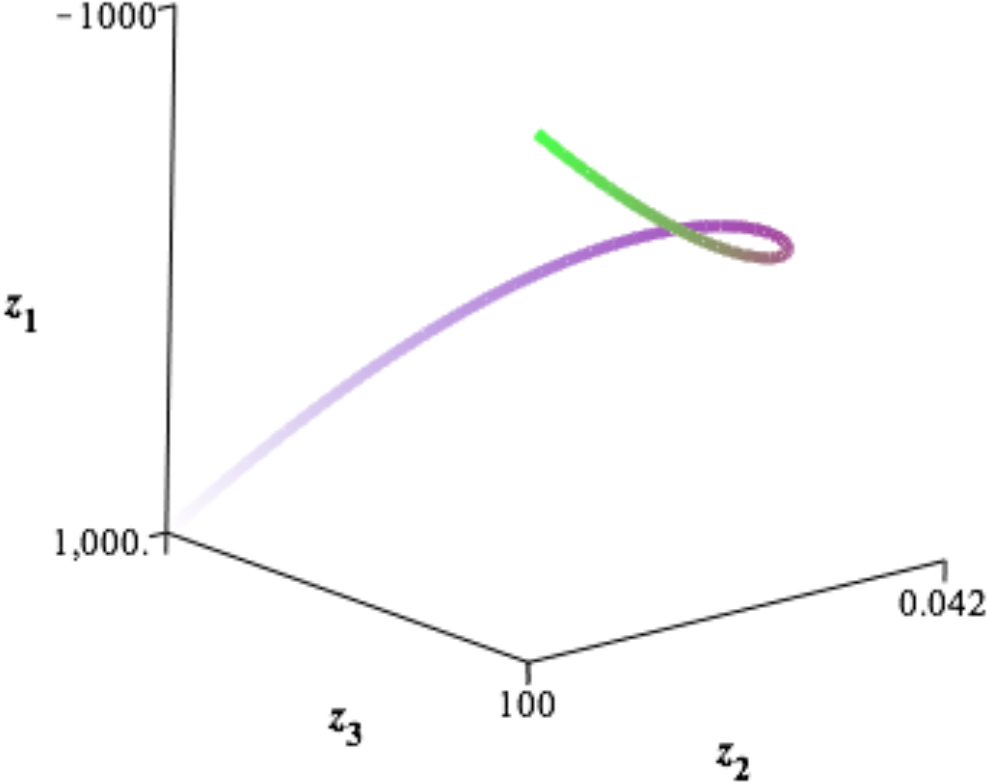}
\caption{Twisted cubic.}
\label{fig-twisted}
\end{figure}

We start with cubics $\red{{\mathcal X}_1}$, $\blue{{\mathcal X}_2}$ pictured on
Fig.~\ref{fig-cubics}, whose parameterizations are given by~\eqref{eq-alpha}
and~\eqref{eq-beta}, respectively.
Since these two cubics are ${\mathcal{PGL}}(3)$-equivalent then the twisted cubic can be either
projected to both of them or to none of them (see Proposition~\ref{proj-classes}).
Let us ``run'' the algorithm for $\blue{{\mathcal X}_2}$.
Following Algorithm~\ref{alg-finite}, since $\blue{{\mathcal X}_2}$ is not a~line, we proceed to
Step~\ref{st-epsilon} and define a~family of curves
\begin{gather}
\label{eq-epsilont}
\epsilon(c_1,c_2,c_3,s)=\left(\frac{s^3+c_1}{s+c_3}, \frac{s^2+c_2}{s+c_3}\right).
\end{gather}
Since $\blue{{\mathcal X}_2}$ is not a~conic, we proceed to Step~\ref{st-computeKT}.
Invariants for ${\mathcal X}_1$ and ${\mathcal X}_2$ are given by formulae~\eqref{eq-sig-alpha}
and~\eqref{eq-sig-beta}, respectively.
The formulae for rational functions $K_{\mathcal P}|_{\epsilon}(c_1,c_2,c_3,s)$ and $T_{\mathcal
P}|_{\epsilon}(c_1,c_2,c_3,s)$ are too long to be included in the paper, but can be computed using
our {\sc Maple} code~\cite{maple-code}.
Step~\ref{st-gen} of Algorithm~\ref{alg-finite} returns {\sc true}.
In fact, $c=(1,0,0)$ satisfies conditions~\eqref{st4c2} and therefore ${\mathcal Z}$ can be
projected to both $\red{{\mathcal X}_1}$ and $\blue{{\mathcal X}_2}$ with a~camera centered at
$(-1,0,0)$.
We note that the canonical projection
\begin{gather}
\label{eq-p1}
x=\frac{z_1+1}{z_2},\qquad y=\frac{z_3}{z_2}.
\end{gather}
with this center maps ${\mathcal Z}$ to $\blue{{\mathcal X}_2}$ (although resulting
parameterization differs from~\eqref{eq-beta}.) From decomposition~\eqref{eq-fp-dec} we know that
a~projection from ${\mathcal Z}$ to $\red{{\mathcal X}_1}$ is the composition of the
projection~\eqref{eq-p1} and ${\mathcal{PGL}}(3)$ transformation~\eqref{eq-t21} that brings
$\blue{{\mathcal X}_2}$ to $\red{{\mathcal X}_1}$.
The resulting projection is $x=\frac{10 z_3}{z_1+1}, y=\frac{10 z_2}{z_1+1}$.

As a~side remark, we note $K_{\mathcal P}|_{\epsilon}(c,s)$ and $T_{\mathcal P}|_{\epsilon}(c,s)$
satisfy the implicit equation\eqref{eq-alpha-beta-sige} independently of~$c$.
Thus, the closure of the signature $\tilde{\mathcal Z}_c$ belongs to the closure of the signature
of $\blue{{\mathcal X}_2}$ (and~$\red{{\mathcal X}_1}$).
Therefore, we could immediately establish existence of a~complex projection, because the
conditions~\eqref{eq-gen} for $c$ being non-exceptional and such that $\dot{K}_{\mathcal
P}|_{\epsilon}\underset{\mathbb{C}(s)}\neq0$ define a~Zariski open non-empty subset of~$\mathbb{C}^3$.

Let us now consider $\brown{{\mathcal X}_4}$ pictured on Fig.~\ref{fig-delta}.
Again Algorithm~\ref{alg-finite} reaches Step~\ref{st-gen} and returns {\sc true}.
In fact, $c=(1,1,0)$ satisfies conditions~\eqref{st4c2} and therefore ${\mathcal Z}$ can be
projected to $\brown{{\mathcal X}_4}$ with a~camera centered at $(-1,-1,0)$.
Indeed, the canonical projection with this center{\samepage
\begin{gather*}%\label{eq-p2}
x=\frac{z_1+1}{z_3},\qquad y=\frac{z_2+1}{z_3}
\end{gather*}
maps ${\mathcal Z}$ to ${\mathcal X}_4$.}

This is a~good point to compare real and complex projection problems.
As was established in Example~\ref{ex-psig}, the signatures of all three curves $\red{{\mathcal
X}_1}$, $\blue{{\mathcal X}_2}$ and $\brown{{\mathcal X}_4}$ have the same implicit
equation~\eqref{eq-alpha-beta-sige}.
Therefore, they are all ${{\mathcal{PGL}}(3,\mathbb{C})}$-equivalent.
Therefore, since there is a~projection from~${\mathcal Z}$ to~$\red{{\mathcal X}_1}$ and
$\blue{{\mathcal X}_2}$, centered at $(-1,0,0)$, there is a~\emph{complex} projection centered at
$(-1,0,0)$ from~${\mathcal Z}$ to~$\brown{{\mathcal X}_4}$.
We also established, in Example~\ref{ex-psig} that
\begin{gather}
\brown{{\mathcal X}_4}\underset{{{\mathcal{PGL}}(3,\mathbb{R})}}\ncong\blue{{\mathcal X}_2}
\end{gather}
and, therefore, there
is no \emph{real} projection centered at $(-1,0,0)$ from ${\mathcal Z}$ to $\brown{{\mathcal X}_4}$
which, as we have seen, does not preclude the existence of a~\emph{real} projection with
a~different center $(-1,-1,0)$.

Finally we consider $\viol{{\mathcal X}_3}$, pictured on Fig.~\ref{fig-cubics} with
parameterization~\eqref{eq-gamma}.
From Example~\ref{ex-psig} we know that invariants for $\viol{{\mathcal X}_3}$ are constants, see~\eqref{eq-sig-gamma}.
Following Algorithm~\ref{alg-finite}, we need to decide whether there
exists $c\in\mathbb{R}^3$, such that $\epsilon(c,s)$ does not parameterize a~line or a~conic and
\begin{gather*}
K_{\mathcal P}|_\epsilon(c,s)=\frac{250047}{12800}
\qquad
\forall\, s\in\mathbb{R}.
\end{gather*}
This is, indeed, true for $c_1=c_2=0$ and $c_3=1$.
This is sufficient to conclude the existence of a~real projection.
We can check that ${\mathcal Z}$ can be projected to $\viol{{\mathcal X}_3}$ by the a~central
projection $x=\frac{z_1}{z_3+1}$, $y=\frac{z_2}{z_3+1}$.
\end{example}

The above example underscores Remark~\ref{rem-pr-gr}: although the twisted cubic can
be projected to each of the planar curves $\red{{\mathcal X}_1}$, $\blue{{\mathcal X}_2}$,
$\viol{{\mathcal X}_3}$ and $\brown{{\mathcal X}_4}$, the planar curve $\viol{{\mathcal X}_3}$ is
{\em not} ${\mathcal{PGL}}(3,\mathbb{C})$-equivalent to $\red{{\mathcal X}_1}$, or $\blue{{\mathcal
X}_2}$, or $\brown{{\mathcal X}_4}$.
Also $\brown{{\mathcal X}_4}$ is not ${\mathcal{PGL}}(3,\mathbb{R})$-equivalent to $\red{{\mathcal
X}_1}$ or $\blue{{\mathcal X}_2}$.

\begin{example}
\label{ex-cp-cubix-conic}
In this example, we establish that the twisted cubic~\eqref{eq-twisted} can be projected to any
conic.
As before, the family of rational curves $\epsilon$ is defined by~\eqref{eq-epsilont}.
The twisted cubic can be projected to a~conic if and only if there exists $c$ such that
$\epsilon(c,s)$ does not parameterize a~line and $\Delta_2|_{\epsilon}\underset{\mathbb{C}(s)}=0$.
We can easily check that $\epsilon(c,s)$ is not a~line for all $c$ and that
$\Delta_2|_{\epsilon}\underset{\mathbb{C}(s)}=0$ whenever
\begin{gather}
\label{eq-twconic}
c_1=c_3^3,\qquad c_2=-c_3^2.
\end{gather}
Let $c_3=-a$, where $a$ is any real number.
From~\eqref{eq-twconic} it follows that $\epsilon$ parameterizes a~conic if and only if
$c=(-a^3,-a^2,-a)$.
The corresponding canonical projection centered $(a^3,a^2,a)$ maps the twisted cubic to the
parabola
\begin{gather*}
y^2+a y-x+a^2=0.
\end{gather*}
Since all conics are ${\mathcal{PGL}}(3)$-equivalent, we established that the twisted cubic can be
projected to any conic.
Moreover, we established that the twisted cubic is projected to a~conic if and only if the center
of the projection lies on the twisted cubic.
\end{example}

So far in all our examples the outcome of the projection algorithm was {\sc true}.
Below is an example with {\sc false} outcome.
\begin{example} We will show that the twisted cubic~\eqref{eq-twisted} can not be projected to the
quintic $\omega(t)=(t,t^5)$.
The signature of the quintic is parameterized by a~constant map
\begin{gather*}
K_{\mathcal P}|_{\omega}(t)=\frac{1029}{128}\qquad \text{and} \qquad T_{\mathcal P}|_{\omega}(t)=0,
\qquad
\forall\, t.
\end{gather*}
Following Algorithm~\ref{alg-finite}, we need to decide whether there exists~$c\in\mathbb{R}^3$,
such that $\epsilon(c,s)$ does not parameterize a~line or a~conic and
\begin{gather*}
K_{\mathcal P}|_{\epsilon}(c,s)=\frac{1029}{128},\qquad \forall\, s\in\mathbb{R}.
\end{gather*}
Substitution of several values of~$s$ in the above equation yields a~system of polynomial equations
for $c_1,c_2,c_3\in\mathbb{R}$ that has no solutions.
We conclude that there is no central projection from~${\mathcal Z}$ to~$\omega(t)=(t,t^5)$.
This outcome is, of course, expected, because a~cubic can not be projected to a~curve of degree
higher than~3.
\end{example}

\subsection{Parallel projections}

The algorithm for parallel projections is based on the reduced parallel projection criterion stated
in Corollary~\ref{reduced-aff-camera}.
This algorithm follows the same logic but has more steps than Algorithm~\ref{alg-finite}, because
we need to decide whether a~given planar curve is ${\mathcal A}(2)$-equivalent to a~curve
parameterized by $\alpha(s)=(z_2(s),z_3(s))$, or to a~curve parameterized by
$\beta(b,s)=\left({z_1(s)+b z_2(s)}, z_3(s)\right)$ for some $b\in\mathbb{R}$, or to a~curve
parameterized by $\delta(a_1,a_2,s)=\left({z_1(s)+a_1z_3(s)},z_2+a_2z_3(s)\right)$ for some
$a=(a_1,a_2)\in\mathbb{R}^2$.
Since the affine transformations are considered, projective invariants are replaced with
affine invariants (see~\eqref{eq-kta}).
Due to its similarity to Algorithm~\ref{alg-finite}, we refrain from writing out the steps of the
parallel projection algorithm and content ourselves with presenting examples.
A {\sc Maple} implementation of the parallel projection algorithm over complex numbers is included
in~\cite{maple-code}.

\begin{example}%\label{ex-pp-cubics}
As a~follow-up to Example~\ref{ex-cp-cubics}, it is natural to ask whether the twisted cubic can be
projected to any of the cubics considered in that example by a~\emph{parallel projection}.
Our implementation of the parallel projection algorithms~\cite{maple-code} provides a~negative
answer to this question, the twisted cubic \emph{can not} be projected to $\red{{\mathcal X}_1}$,
or $\blue{{\mathcal X}_2}$ or $\viol{{\mathcal X}_3}$, or $\brown{{\mathcal X}_4}$ under a~parallel
projection even over complex numbers.
There are plenty of rational cubics to which the twisted cubic can be projected by a~parallel
projection.
For example, the orthogonal projection to the $z_1,z_3$-plane projects the twisted cubic to
$(s^3,s)$.
\end{example}

\begin{example}%\label{ex-pp-cubix-conic}
As a~follow-up to Example~\ref{ex-cp-cubix-conic}, we consider the problem of the parallel
projection of the twisted cubic to a~conic.
To answer this question we first consider a~curve $\alpha(s)=(z_2(s),z_3(s))=(s^2,s)$, which is
a~parabola, and therefore the twisted cubic can be projected to any parabola.
We then define a~one-parametric family of curves
\begin{gather*}
\beta(b,s)=\big({z_1(s)+b z_2(s)}, z_3(s)\big)=\big(s^3+b s^2,s\big)
\end{gather*}
and a~two-parametric family
\begin{gather*}
\delta(a,s)=\big({z_1(s)+a_1z_3(s)},z_2+a_2z_3(s)\big)=\big({s^3+a_1 s},s^2+a_2 s\big).
\end{gather*}
Obviously, there are no parameters $a$ or $b$ such that a~curve in those two families becomes
a~conic (we can check this formally by computing rational functions~$\Delta_2|_\alpha$ and
$\Delta_2|_\beta$ and seeing that there are no values of~$a$ or~$b$ that will make them zero
functions of~$s$).
Thus we conclude that the twisted cubic can not be projected to either a~hyperbola or an ellipse by
a~parallel projection.
\end{example}

For a~less obvious example and to finally get away from the twisted cubic we
consider the following:
\begin{example}%\label{ex-aff-2}
We would like to decide whether the spatial curve ${\mathcal Z}$
parameterized by
\begin{gather*}
\Gamma(s)=\big(s^4, s^2, s\big),\qquad s\in\mathbb{R}
\end{gather*}
can be projected to ${\mathcal X}$ parameterized by
\begin{gather*}
\gamma(t)=\big(t^4+t^3,t^2+t\big),\qquad t\in\mathbb{R}.
\end{gather*}

\begin{figure}[t] \centering
\begin{minipage}[b]{0.45\linewidth} \centering
\includegraphics[scale=.32]{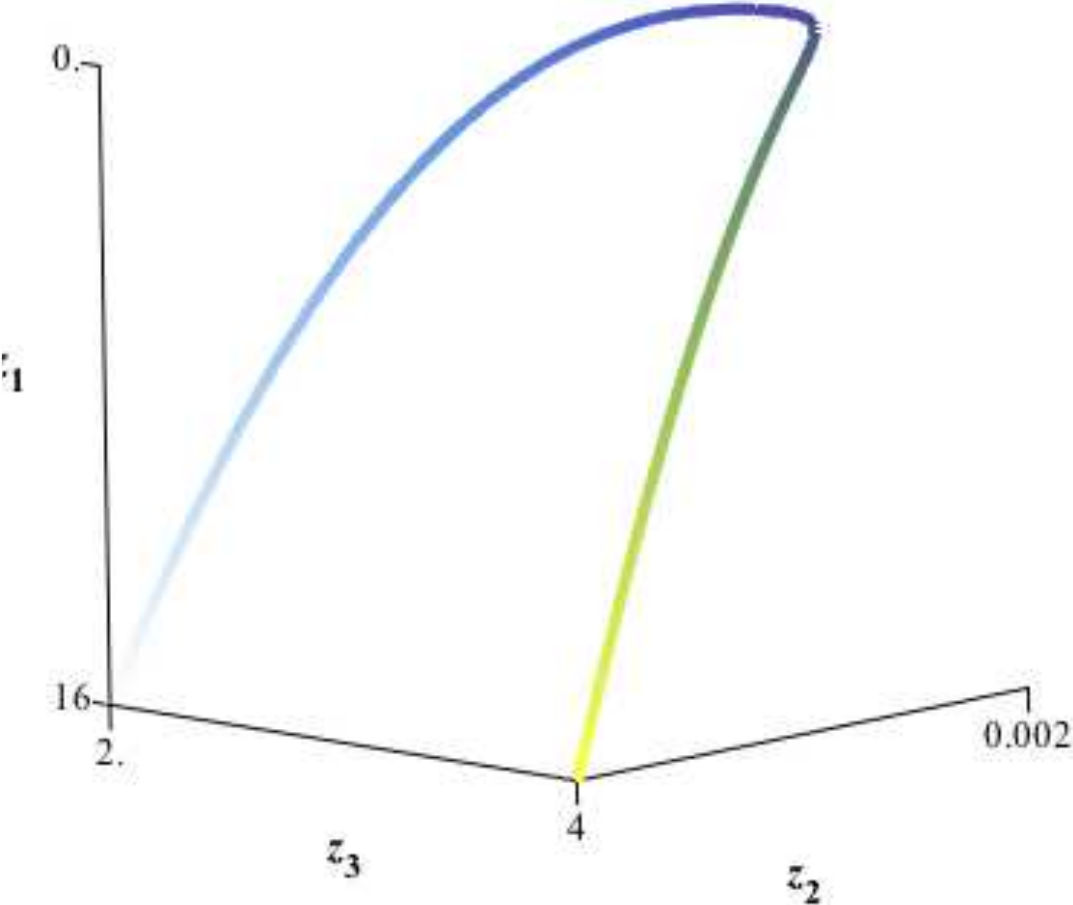} \caption{${\mathcal Z}$:
$\Gamma(s)=\big(s^4, s^2, s\big)$.}
%\label{fig-oset}
\end{minipage} \begin{minipage}[b]{0.45\linewidth}
\centering \includegraphics[scale=.26]{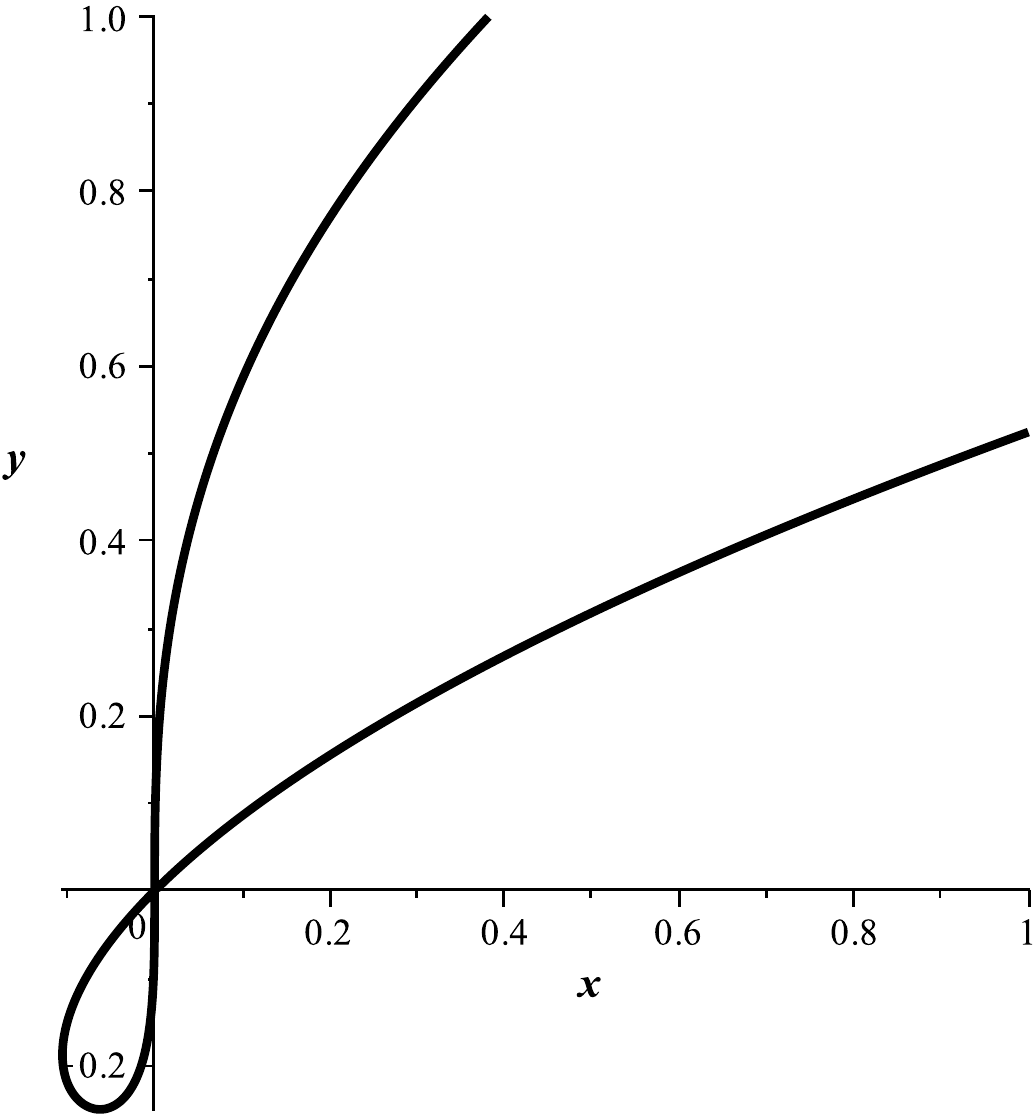}
\caption{{${\mathcal X}$}: $y^4-2 x y^2+x^2-x y=0$.}
%\label{fig-iset}
\end{minipage}
\end{figure}

The signature of ${\mathcal X}$ is parameterized by invariants
\begin{gather*}
K_\mathcal{A}|_\gamma(t)
=-1600 \frac{(24 t^5+51 t^4+57 t^3+33 t^2+9 t+1)^2}{(24 t^3+32 t^2+24 t+5)^3},
\\
T_\mathcal{A}|_\gamma(t)
=-40 \frac{448 t^7+1304 t^6+1956 t^5+1735 t^4+915 t^3+287 t^2+51 t+4}{(24 t^3+32 t^2+24 t+5)^2}.
\end{gather*}
The curve $\alpha(s)=(z_2(s),z_3(s))=(s^2,s)$ is a~parabola and is not ${\mathcal A}(2)$-
equivalent to ${\mathcal X}$.
A curve from the family
\begin{gather*}
\beta(b,s)=\big({z_1(s)+b z_2(s)}, z_3(s)\big)=\big(s^4+b s^2,s\big)
\end{gather*}
is not ${\mathcal A}(2)$-exceptional and its invariants are given by
\begin{gather*}%\label{eq-beta-inv}
K_\mathcal{A}|_\beta=100\,\frac{(3\,b-14\,s^2)^2\,s^2}{(b-14\,s^2)^3},
\qquad
T_\mathcal{A}|_\beta=-5\,\frac{b^2-56\,b\,s^2+140\,s^4}{(b-14\,s^2)^2}.
\end{gather*}
Independently of the value of $b$ all curves in the family have the same signature equation
\begin{gather*}
245\,\tau^3+40000-448\,\varkappa^2+3780\,\tau\,\varkappa-1575\,\tau^2+14525\,\varkappa-6000\,\tau=0,
\end{gather*}
which is different from the implicit equation for the signature for ${\mathcal X}$, and therefore
the curves from this family are not ${\mathcal A}(2)$-equivalent to ${\mathcal X}$.
We finally consider a~two-parametric family
\begin{gather*}
\delta(a,s)=\big({z_1(s)+a_1\,z_3(s)},z_2+a_2\,z_3(s)\big)=\left(s^4+a_1\,s,\,s^2+a_2\,s\right)
\end{gather*}
and find out that for $a_1=20$ and $a_2=2$ the implicit equations of the signatures of ${\mathcal
X}$ and the curve parameterized by $\delta(a,s)$ are the same.
Thus we conclude that ${\mathcal Z}$ projects to ${\mathcal X}$ by a~parallel projection over the
complex numbers.

To check that this remains true over $\mathbb{R}$, we look at the invariants of $\delta(20,2,s)$
\begin{gather*}
K_\mathcal{A}|_\delta(20,2,s)
=-\frac{25}2 \frac{(6 s^5+21 s^4+84 s^3+90 s^2+30 s+25)^2}{(3 s^3+7 s^2+25 s+5)^3},
\\
T_\mathcal{A}|_\delta(20,2,s)
=-5 \frac{14 s^7+65 s^6+294 s^5+535 s^4+450 s^3+415 s^2+250 s+25}{(3 s^3+7 s^2+25 s+5)^2}.
\end{gather*}
By solving equations $K_\mathcal{A}|_\delta(20,2,s)=K_\mathcal{A}|_\gamma(t)$ and
$T_\mathcal{A}|_\delta(20,2,s)=T_\mathcal{A}|_\gamma(t)$ for $s$ in terms of $t$, we find a~real
reparameterization $s=1+4 t$ which matches the signature maps of two curves, i.e.\
${S}|_\gamma(t)={S}|_\delta(20,2,1+4\,t)$.
Thus the signatures of ${\mathcal X}$ and the curve parameterized by $\delta(20,2,s)$ are identical
and, therefore, ${\mathcal Z}$ projects to ${\mathcal X}$ by a~parallel projection over the real
numbers.

We proceed to find a~projection.
Since ${S}|_\gamma(t)={S}|_\delta(20,2,1+4 t)$, we not only know that the exists ${\mathcal
A}(2)$-transformation $A$ that maps~$\gamma(t)$ to $\delta\big(20,2,(1+4 t)\big)$, but that for
any value of~$t$ the transformation~$A$ maps the point $\gamma(t)$ to the point
$\delta(20,2,(1+4 t)$.
Three pairs of points in general position are sufficient to recover an affine
transformation.
Using three pairs of points corresponding to $t=0,1,2$ we find the affine transformation
\begin{gather*}
\bar x=256 x+96 y+21,
\qquad
\bar y=16 y+3
\end{gather*}
that transforms ${\mathcal X}$ to the curve parameterized by $\delta(20,2,s)$.
From decomposition~\eqref{eq-pp-decomp} we can now recover a~\emph{parallel projection}
\begin{gather*}
x=\frac1{156} z_1-\frac3{128} z_2+\frac1{32} z_3-\frac3{256},
\qquad
y=\frac1{16} z_2+\frac18 z_3-\frac3{16}
\end{gather*}
that maps ${\mathcal Z}$ to ${\mathcal X}$.
As a~side remark, we note that there is no \emph{central projection} (either complex or real) that
maps ${\mathcal Z}$ to ${\mathcal X}$.
\end{example}

\section{Extensions}
\label{sect-ext}
\subsection{Projection problem for non-rational algebraic curves}%\label{sect-non-rat}
The projection criteria of Theorems~\ref{main-finite-camera} and~\ref{main-affine-camera} and
the group equivalence criteria of Theorem~\ref{th-main-sig} are valid for non-rational algebraic
curve.
An implementation of the projection algorithm is more challenging, however, when the curves are
described as zero sets of polynomials, rather than by a~parameterization.
To illustrate these challenges, we can, in fact, consider a~rational curve, the twisted cubic, but
this time we will define this curve by implicit equations.
\begin{example} The twisted cubic is defined as the zero set ${\mathcal Z}$ of the system of two
polynomials $g_1=z_1-z_2 z_3$ and $g_2=z_2-z_3^2$.
Following the projection criterion of Theorem~\ref{main-finite-camera}, we define a~family of
algebraic curves
\begin{gather*}
\tilde{\mathcal Z}_{c_1,c_2,c_3}=\overline{\left\{\left(\frac{z_1+c_1}{z_3+c_3},
 \frac{z_2+c_2}{z_3+c_3}\right)
\Bigg| (z_1,z_2,z_3)\in{\mathcal Z}\right\}}.
\end{gather*}
To restrict differential functions, and in particular invariants, to curves from the family
$\tilde{\mathcal Z}_{c_1,c_2,c_3}$ we need to compute their implicit equations.
A naive approach would be to define an ideal
\begin{gather*}
Y:=\big\langle(z_3+c_3)x-(z_1+c_1),
(z_3+c_3)y-(z_2+c_2)\,{z_3+c_3},
(z_3+c_3)\delta-1,
\\
\phantom{Y:=\big\langle}
g_1(z_1,z_2,z_3),
g_2(z_1,z_2,z_3)\big\rangle
\end{gather*}
and compute the elimination ideal $Y\cap\mathbb{R}[c_1,c_2,c_3,x,y]$.
This leads to a~polynomial (cubic in~$x$ and~$y$)
\begin{gather*}
A_c(x,y)=
\big(c_3^3-c_1\big) y^3+\big(c_3^2+c_2\big) y^2 x-\big(c_3^2+c_2\big) x^2+(c_1+c_3 c_2) x y-3 \big(c_1 c_3+c_3^2 c_2\big) y^2
\\
\phantom{A_c(x,y)=}
+2 \big(c_1 c_3-c_2^2\big) x+3 \big(c_2^2 c_3+c_1 c_2\big) y-c_1^2-c_2^3,
\end{gather*}
which, indeed, defines the curve $\tilde{\mathcal Z}_{c_1,c_2,c_3}$ provided $c_3^3-c_1\neq0$ and
$c_3^2+c_2\neq0$, but not otherwise.
The underlying issue is non-commutativity of specialization of parameters $c$ with the elimination
(or, in more geometric language, non-commutativity of intersection and Zariski closure).

To find implicit equations for $\tilde{\mathcal Z}_{c_1,c_2,c_3}$ for the remaining values of the
parameters $c$, we need a~detailed analysis of the constructible set obtained by the projection of
the variety of the ideal $Y$ onto $(x,y,c_1,c_2,c_3)$-subspace.
This could be done using, for instance, {\sc RegularChains} package in {\sc Maple}.
We find out that when $c_3^3-c_1\neq0$, but $c_3^2+c_2=0$, the curve $\tilde{\mathcal
Z}_{c_1,c_2,c_3}$ is a~zero set of a~cubic
\begin{gather*}
B_c(x,y)=y^3+3 c_3 y^2-x y+3 c_3^2 y-2 c_3 x+c_1+c_3^3.
\end{gather*}
When both $c_3^3-c_1=0$ and $c_3^2+c_2=0$ (that is the case when the center of the projection lies
on the twisted cubic), then $\tilde{\mathcal Z}_{c_1,c_2,c_3}$ is a~zero set of a~quadratic
polynomial
\begin{gather*}
C_c(x,y)=y^2+c_3 y-x+c_3^2.
\end{gather*}
On Step~\ref{st-epsilon} of Algorithm~\ref{alg-finite}, where we are describing the family of
curves $\tilde{\mathcal Z}_{c_1,c_2,c_3}$, we must produce all three possible implicit equations
$A_c(x,y)=0$, when $c_3^3-c_1\neq0$ and $c_3^2+c_2\neq0$, $B_c(x,y)=0$, when $c_3^3-c_1\neq0$, but
$c_3^2+c_2=0$ and $C_c(x,y)=0$, when $c_3^3-c_1=0$ and $c_3^2+c_2=0$.
Then the rest of the algorithm should run for each of these cases with appropriate conditions on
$c$.
\end{example}

We found that, for majority of the examples, producing the set of all possible
implicit equations for the curves $\tilde{\mathcal Z}_{c_1,c_2,c_3}$~\eqref{poset} from the given
implicit equations of an algebraic curve ${\mathcal Z}\subset\mathbb{R}^3$ to be a~very challenging
computational task.
\subsection{Projection problem for finite lists of points}%\label{discrete}
The projection criterion of Theorem~\ref{main-finite-camera} adapts to finite lists of points
as follows: \begin{theorem}[central projection criterion for finite lists]
\label{main-finite-points}
A list $Z=({\mathbf z}^1,\dots,{\mathbf z}^m)$ of $m$ points in $\mathbb{R}^3$ with coordinates
${\mathbf z}^l=(z^l_1,z^l_2,z^l_3)$, $l=1,\dots, m$, projects onto a~list $X=({\mathbf
x}^1,\dots,{\mathbf x}^m)$ of $m$ points in $\mathbb{R}^2$ with coordinates ${\mathbf
x}^l=(x^l,y^l)$, $l=1,\dots, m$, by a~central projection if and only if there exist
$c_1,c_2,c_3\in\mathbb{R}$ and $[A]\in{\mathcal{PGL}}(3)$, such that
\begin{gather*}%\label{points-finite}
[x^l,y^l,1]^{\tr}=[A][z_1^l+c_1, z_2^l+c_2, z_3^l+c_3]^{\tr}\qquad \text{for} \quad l=1,\dots, m.
\end{gather*}
\end{theorem}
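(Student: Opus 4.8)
The plan is to adapt the proof of Theorem~\ref{main-finite-camera} almost verbatim, the point being that for a finite list of points the Zariski-closure subtleties of Reformulation~\ref{proj-problem-v2} disappear entirely and the whole argument is just the central camera decomposition $[P]=[A]\,[\stpf]\,[B]$ recorded in \eqref{eq-fp-dec}--\eqref{eq-fPB}. First I would unwind the definitions: ``$Z$ projects onto $X$ by a central projection'' means there is a $3\times4$ matrix $P$ whose left $3\times3$ submatrix is nonsingular with $[x^l,y^l,1]^{\tr}=[P]\,[z^l_1,z^l_2,z^l_3,1]^{\tr}$ for $l=1,\dots,m$; note that each such equality already forces the right-hand side to be a genuine point of $\mathbb{PR}^2$, so the projection is automatically defined at every ${\mathbf z}^l$, and no additional genericity hypothesis is needed.

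For the forward direction I would let $A$ be the left $3\times3$ submatrix of $P$. Since $A$ is nonsingular its columns $p_{*1},p_{*2},p_{*3}$ span $\mathbb{R}^3$, so the last column satisfies $p_{*4}=c_1 p_{*1}+c_2 p_{*2}+c_3 p_{*3}$ for unique $c_1,c_2,c_3\in\mathbb{R}$. Exactly as in the proof of Theorem~\ref{main-finite-camera} this yields $P=A\,\stpf\,B$ with $\stpf$ and $B$ as in \eqref{eq-fPB}, and since $[\stpf]\,[B]\,[z_1,z_2,z_3,1]^{\tr}=[z_1+c_1,z_2+c_2,z_3+c_3]^{\tr}$, applying this to each ${\mathbf z}^l$ gives $[x^l,y^l,1]^{\tr}=[A]\,[z^l_1+c_1,z^l_2+c_2,z^l_3+c_3]^{\tr}$ for all $l$, with $[A]\in{\mathcal{PGL}}(3)$ since $A$ is nonsingular.

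For the converse I would start from $c_1,c_2,c_3$ and $[A]\in{\mathcal{PGL}}(3)$ with $[x^l,y^l,1]^{\tr}=[A]\,[z^l_1+c_1,z^l_2+c_2,z^l_3+c_3]^{\tr}$, and set $[P]:=[A]\,[\stpf]\,[B]$ with $\stpf$, $B$ as in \eqref{eq-fPB}. The left $3\times3$ submatrix of $P$ equals $A$ up to a nonzero scalar, hence is nonsingular, so $[P]\in\fp$; and a direct computation using the same identity $[\stpf]\,[B]\,[z_1,z_2,z_3,1]^{\tr}=[z_1+c_1,z_2+c_2,z_3+c_3]^{\tr}$ gives $[P]\,[z^l_1,z^l_2,z^l_3,1]^{\tr}=[x^l,y^l,1]^{\tr}$ for $l=1,\dots,m$, so $Z$ projects onto $X$ by the central projection $[P]$.

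The only real ``obstacle'' is bookkeeping of homogeneous coordinates together with the observation that the block $[\stpf]\,[B]$ is independent of the point, so the decomposition $[P]=[A]\,[\stpf]\,[B]$ acts correctly on each ${\mathbf z}^l$ simultaneously; no irreducibility, non-degeneracy, or closure hypothesis on $X$ (or on $Z$) is required, in contrast with the curve case. If desired, one can also phrase the result as saying that central projections descend to the diagonal action of ${\mathcal A}(3)\times{\mathcal{PGL}}(3)$ on $m$-tuples of points, in parallel with Proposition~\ref{proj-classes}.
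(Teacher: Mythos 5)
Your proposal is correct and matches the paper exactly: the paper itself gives no separate argument, stating only that the proof is a straightforward adaptation of the proof of Theorem~\ref{main-finite-camera} via the decomposition $[P]=[A][\stpf][B]$ of \eqref{eq-fp-dec}--\eqref{eq-fPB}, which is precisely what you carry out. Your added remark that the Zariski-closure issues of the curve case disappear for finite lists is accurate and consistent with the paper's intent.
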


The proof of Theorem~\ref{main-finite-points} is a~straightforward adaptation of
the proof of Theorem~\ref{main-finite-camera}.
The parallel projection criteria for curves, given in Theorem~\ref{main-affine-camera} and
Corollary~\ref{reduced-aff-camera}, are adapted to the finite lists in an analogous way.

The central and the parallel projection problems for lists of $m$ points is therefore reduced to
a~modification of the problems of equivalence of two lists of $m$ points in $\mathbb{PR}^2$ under
the action of ${\mathcal{PGL}}(3)$ and ${\mathcal A}(2)$ groups, respectively.
A separating set of invariants for lists of $m$ points in $\mathbb{PR}^2$ under the ${\mathcal
A}(2)$-action consists of ratios of certain areas and is listed, for instance, in Theorem~3.5
of~\cite{olver01}.
Similarly, a~separating set of invariants for lists of $m$ ordered points in $\mathbb{PR}^2$ under
the ${\mathcal{PGL}}(3)$-action consists of cross-ratios of certain areas and is listed, for instance,
in Theorem~3.10 in~\cite{olver01}.
In the case of central projections we, therefore, obtain a~system of polynomial equations on
$c_1$, $c_2$ and $c_3$ that have solutions if and only if the given set $Z$ projects to the given set
$X$ and an~analog of Algorithm~\ref{alg-finite} follows.
The parallel projections are treated in a~similar way.
Details of this adaptation appear in the dissertation~\cite{burdis10}.

\looseness=-1
We note, however, that there are other computationally efficient solution of the projection
problem for lists of points.
In their book~\cite{hartley04}, Hartley and Zisserman describe algorithms that are based on
straightforward approach: one writes a~system of equations that relates pairs of the corresponding
points in the lists $Z\subset\mathbb{R}^3$ and $X\subset\mathbb{R}^3$ and determines if this system
has a~solution.
The book also describes algorithms for finding parameters of the camera that produces an optimal
(under various criteria) but not exact match between the object and the image.

In~\cite{stiller06, stiller07}, the authors present a~solution to the problem of deciding whether
or not there exists a~parallel projection of a~list $Z=({\mathbf z}^1,\dots,{\mathbf z}^m)$ of $m$
points in $\mathbb{R}^3$ to a~list $X=({\mathbf x}^1,\dots,{\mathbf x}^m)$ of $m$ points in
$\mathbb{R}^2$, without finding a~projection explicitly.
They identify the lists $Z$ and $X$ with the elements of certain Grassmanian spaces and use
Pl\"uker embedding of Grassmanians into projective spaces to explicitly define the algebraic
variety that characterizes pairs of sets related by a~parallel projection.
They also define of an object/image distance between lists of points $Z\subset\mathbb{R}^3$ and
$X\subset\mathbb{R}^2$, such that the distance is zero if and only if there exists a~parallel
projection that maps $Z$ to $X$.
\begin{figure}[t]\centering \includegraphics[scale=0.6]{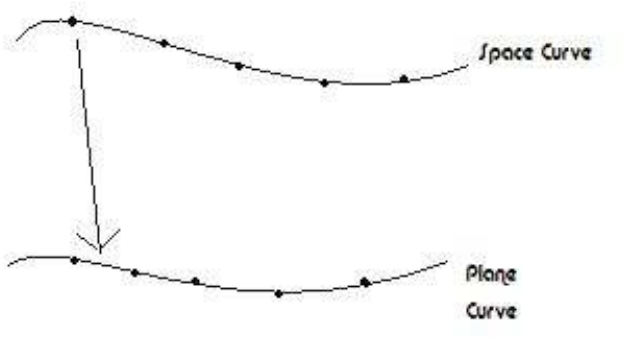}
\caption{Projection
problem for curves vs.~projection problems for lists of points.} \label{fig-dc}
\end{figure}
As illustrated by Fig.~\ref{fig-dc}, a~solution of the projection problem for
lists of points does not provide an immediate solution to the discretization of the projection
problem for curves.
Indeed, let $Z=({\mathbf z}^1,\dots,{\mathbf z}^m)$ be a~discrete sampling of a~spatial curve
${\mathcal Z}$ and $X=({\mathbf x}^1,\dots,{\mathbf x}^m)$ be a~discrete sampling of a~planar curve~${\mathcal X}$.
It might be impossible to project the list~$Z$ onto~$X$, even when the curve~${\mathcal Z}$ can be
projected to the curve~${\mathcal X}$.

\subsection{Applications: challenges and ideas}%\label{challenge}
A discretization of projection algorithms for curves will pave a~road to real-life applications and is a~topic of our future research.
Such algorithms may utilize invariant numerical approximations of differential invariants
presented in~\cite{boutin00, calabi98}.
Differential invariants and their approxi\-ma\-tions are highly sensitive to image perturbations and,
therefore, pre-smoothing of the data is required to use them.
Since affine and projective invariants involve high order derivatives, this approach may not be
practical.
Other types of invariants, such as semi-differential (or joint) inva\-riants~\mbox{\cite{olver01,vang92-1}}, integral inva\-riants~\mbox{\cite{feng09,hann02, sato97}} and moment
invariants~\cite{taub92,xu06} are less sensitive to image perturbations and may be employed to
solve the group-equivalence problem.

One of the essential contributions of~\cite{stiller07, stiller06} is the definition of an
object/image distance between ordered sets of $m$ points in $\mathbb{R}^3$ and $\mathbb{R}^2$, such
that the distance is zero if and only if these sets are related by a~projection.
Since, in practice, we are given only an approximate position of points, a~``good'' object/image
distance provides a~tool for deciding whether a~given set of points in $\mathbb{R}^2$ is a~good
approximation of a~projection of a~given set of points in $\mathbb{R}^3$.
Defining such object/image distance in the case of curves is an important direction of further
research.

\appendix
\section{Appendix}
\label{appendix}

We provide explicit formulae for invariants in terms of jet coordinates.
For convenience we recall our notation
\begin{gather}%\label{D1}
\Delta_1:=3 \jy4 \jy2-5 \big[\jy3\big]^2,
\nonumber\\
\label{D2}
\Delta_2:=9 \jy5 \big[\jy2\big]^2-45 \jy4 \jy3 \jy2+40 \big[\jy3\big]^3.
\end{gather}
A classifying set of ${\mathcal A}(2)$-invariants~\eqref{aff-inv} is given by
\begin{gather}
\label{eq-kta}
K_{\mathcal A}=\frac{(\Delta_2)^2}{(\Delta_1)^3},
\\
\nonumber
T_{\mathcal A}=
\frac{9 \jy6 \big[\jy2\big]^3-63 \jy5 \jy3 \big[\jy2\big]^2-45 \big[\jy4\big]^2 \big[\jy2\big]^2+255 \jy4 \big[\jy3\big]^2 \jy2-160 \big[\jy3\big]^4}
{(\Delta_1)^2},
\end{gather}
while a~classifying set of rational ${\mathcal{PGL}}(3)$-invariants~\eqref{proj-inv} is given by
\begin{gather}
\label{eq-kp}
K_{\mathcal P}=\frac{729}{8 (\Delta_2)^8} \Big[18 \jy7 \big[\jy2\big]^4
\Delta_2-189 \big[\jy6\big]^2 \big[\jy2\big]^6
\\
\nonumber
\phantom{K_{\mathcal P}=}
{}+126 \jy6 \big[\jy2\big]^4
\left(9 \jy5 \jy3 \jy2+15 \big[\jy4\big]^2 \jy2-25 \jy4 \big[\jy3\big]^2\right)
\\
\nonumber
\phantom{K_{\mathcal P}=}
{}-189 \big[\jy5\big]^2 \big[\jy2\big]^4
\left(4 \big[\jy3\big]^2+15 \jy2 \jy4\right)
\\
\nonumber
\phantom{K_{\mathcal P}=}
{}+210 \jy5 \jy3 \big[\jy2\big]^2\left(63 \big[\jy4\big]^2 \big[\jy2\big]^2-60 \jy4 \big[\jy3\big]^2 \jy2+32 \big[\jy3\big]^4\right)
\\
\nonumber
\phantom{K_{\mathcal P}=}
{}-525 \jy4\jy2 \left(9 \big[\jy4\big]^3 \big[\jy2\big]^3+15 \big[\jy4\big]^2 \big[\jy3\big]^2 \big[\jy2\big]^2\right.
\\
\nonumber
\left.\phantom{K_{\mathcal P}=}
{}-60 \jy4 \big[\jy3\big]^4 \jy2+64 \big[\jy3\big]^6\right)+11200 \big[\jy3\big]^8 \Big]^3,
\\
\nonumber
T_{\mathcal P}=\frac{243\big[\jy2\big]^4}{2(\Delta_2)^4} \Big[2\jy8\jy2 (\Delta_2)^2
-8\jy7\Delta_2 \Big(9\jy6\big[\jy2\big]^3-36\jy5 \jy3\big[\jy2\big]^2
\\
\nonumber
\phantom{T_{\mathcal P}=}
{} -45\big[\jy4\big]^2\big[\jy2\big]^2+120\jy4 \big[\jy3\big]^2\jy2-40 \big[\jy3\big]^4\Big)+504 \big[\jy6\big]^3 \big[\jy2\big]^5
\\
\nonumber
\phantom{T_{\mathcal P}=}
{}-504 \big[\jy6\big]^2 \big[\jy2\big]^3 \Big(9 \jy5 \jy3 \jy2+15 \big[\jy4\big]^2 \jy2-25 \jy4 \big[\jy3\big]^2\Big)
\\
\nonumber
\phantom{T_{\mathcal P}=}
{} +28\jy6\Big(432\big[\jy5\big]^2\big[\jy3\big]^2\big[\jy2\big]^3+243\big[\jy5\big]^2\jy4 \big[\jy2\big]^4
-1800 \jy5 \jy4 \big[\jy3\big]^3 \big[\jy2\big]^2 \\
\nonumber
\phantom{T_{\mathcal P}=}
{}
-240\jy5 \big[\jy3\big]^5 \jy2+540\jy5 \big[\jy4\big]^2 \big[\jy3\big] \big[\jy2\big]^3+6600 \big[\jy4\big]^2 \big[\jy3\big]^4 \jy2
\\
\nonumber
\phantom{T_{\mathcal P}=}
{}
-2000
\jy4 \big[\jy3\big]^6-5175 \big[\jy4\big]^3 \big[\jy3\big]^2 \big[\jy2\big]^2+1350 \big[\jy4\big]^4 \big[\jy2\big]^3\Big)
\\
\nonumber
\phantom{T_{\mathcal P}=}
{}-2835 \big[\jy5\big]^4 \big[\jy2\big]^4
+252 \big[\jy5\big]^3\jy3 \big[\jy2\big]^2
\Big(9\jy4 \jy2-136 \big[\jy3\big]^2\Big)
\\
\nonumber
\phantom{T_{\mathcal P}=}
{}-35840 \big[\jy5\big]^2 \big[\jy3\big]^6-630 \big[\jy5\big]^2 \big[\jy4\big] \big[\jy2\big]\Big(69\big[\jy4\big]^2\big[\jy2\big]^2-160\big[\jy3\big]^4
\\
\phantom{T_{\mathcal P}=}
{}-153\jy4\big[\jy3\big]^2\big[\jy2\big]\Big)
+2100 \jy5 \big[\jy4\big]^2 \jy3
\Big(72 \big[\jy3\big]^4+63 \big[\jy4\big]^2 \big[\jy2\big]^2\label{eq-tp}
\\
\nonumber
\phantom{T_{\mathcal P}=}
{}-193\jy4\big[\jy3\big]^2\jy2\Big)\!-7875\big[\jy4\big]^4\Big(8\big[\jy4\big]^2\big[\jy2\big]^2\!-22\jy4 \big[\jy3\big]^2 \big[\jy2\big]+9 \big[\jy3\big]^4\Big)\Big].
\end{gather}

\subsection*{Acknowledgements}

The project was supported in part by NSA grant H98230-11-1-0129.
We would like to thank the referees for careful reading of our manuscript and valuable suggestions.

\pdfbookmark[1]{References}{ref}
\LastPageEnding


\begin{thebibliography}{99}
\footnotesize\itemsep=0pt

\bibitem{stiller07}
Arnold G., Stiller P.F., Mathematical aspects of shape analysis for object
 recognition, in Proceedings of IS\&T/SPIE Joint Symposium ``Visual
 Communications and Image Processing'' (San Jose, CA, 2007), \textit{SPIE
 Proceedings}, Vol.~6508, Editors C.W.~Chen, D.~Schonfeld, J.~Luo, 2007,
 65080E, 11~pages.

\bibitem{stiller06}
Arnold G., Stiller P.F., Sturtz K., Object-image metrics for generalized weak
 perspective projection, in Statistics and Analysis of Shapes, \href{http://dx.doi.org/10.1007/0-8176-4481-4_10}{\textit{Model. Simul.
 Sci. Eng. Technol.}}, Birkh\"auser Boston, Boston, MA, 2006, 253--279.

\bibitem{bix98}
Bix R., Conics and cubics. A~concrete introduction to algebraic curves,
 \textit{Undergraduate Texts in Mathematics}, Springer-Verlag, New York, 1998.

\bibitem{blaschke23}
Blaschke W., Vorlesungen \"uber Differentialgeometrie und geometrische
 Grundlagen von Einsteins Relativit\"atstheorie. II.~Affine
 Differentialgeometrie, J.~Springer, Berlin, 1923.

\bibitem{boutin00}
Boutin M., Numerically invariant signature curves, \href{http://dx.doi.org/10.1023/A:1008139427340}{\textit{Int.~J. Comput.
 Vis.}} \textbf{40} (2000), 235--248, \mbox{\href{http://arxiv.org/abs/math-ph/9903036}{math-ph/9903036}}.

\bibitem{burdis10}
Burdis J.M., Object-image correspondence under projections, Ph.D. thesis,
 North Carolina State University, 2010.

\bibitem{maple-code}
Burdis J.M., Kogan I.A., Supplementary material for ``Object-image correspondence for curves under projections'',
 \url{http://www.math.ncsu.edu/~iakogan/symbolic/projections.html}.

\bibitem{bk12}
Burdis J.M., Kogan I.A., Object-image correspondence for curves under central
 and parallel projections, in \href{http://dx.doi.org/10.1145/2261250.2261306}{Proceedings of the Symposium on Computational
 Geometry} (Chapel Hill, NC, 2012), ACM, New York, 2012, 373--382.

\bibitem{calabi98}
Calabi E., Olver P.J., Shakiban C., Tannenbaum A., Haker S., Differential and
 numerically invariant signature curves applied to object recognition,
 \href{http://dx.doi.org/10.1023/A:1007992709392}{\textit{Int.~J. Comput. Vis.}} \textbf{26} (1998), 107--135.

\bibitem{C37}
Cartan E., La th\'eorie des groupes finis et continus et la ge\'om\'etrie
 diff\'erentielle trait\'ees par la m\'ethode du rep\`ere mobile,
 Gauthier-Villars, Paris, 1937.

\bibitem{caviness-johnson}
Caviness B.F., Johnson J.R. (Editors), Quantifier elimination and cylindrical
 algebraic decomposition, \href{http://dx.doi.org/10.1007/978-3-7091-9459-1}{\textit{Texts and Monographs in Symbolic Computation}},
 Springer-Verlag, Vienna, 1998.

\bibitem{CLO96}
Cox D., Little J., O'Shea D., Ideals, varieties, and algorithms. An
 introduction to computational algebraic geometry and commutative algebra, 2nd
 ed., \textit{Undergraduate Texts in Mathematics}, Springer-Verlag, New York, 1997.

\bibitem{faugeras94}
Faugeras O., Cartan's moving frame method and its application to the geometry
 and evolution of curves in the Euclidean, affine and projective planes,
 in Application of Invariance in Computer Vision, \href{http://dx.doi.org/10.1007/3-540-58240-1_2}{\textit{Springer-Verlag
 Lecture Notes in Computer Science}}, Vol.~825, Editors J.L.~Mundy,
 A.~Zisserman, D.~Forsyth, Springer-Verlag, Berlin, 1994, 9--46.

\bibitem{faugeras01}
Faugeras O., Luong Q.T., The geometry of multiple images. The laws that govern
 the formation of multiple images of a scene and some of their applications,
 MIT Press, Cambridge, MA, 2001.

\bibitem{feldmar95}
Feldmar J., Ayache N., Betting F., 3D-2D projective registration of free-form
 curves and surfaces, in Proceedings of the Fifth International Conference on
 Computer Vision (ICCV'95), \href{http://dx.doi.org/10.1109/ICCV.1995.466891}{IEEE Computer Society}, Washington, 1995, 549--556.

\bibitem{feng09}
Feng S., Kogan I., Krim H., Classification of curves in 2{D} and 3{D} via
 affine integral signatures, \href{http://dx.doi.org/10.1007/s10440-008-9353-9}{\textit{Acta Appl. Math.}} \textbf{109} (2010),
 903--937, \href{http://arxiv.org/abs/0806.1984}{arXiv:0806.1984}.

\bibitem{fulton}
Fulton W., Algebraic curves. An introduction to algebraic geometry, \textit{Advanced
 Book Classics}, Addison-Wesley Publishing Company, Redwood City, CA, 1989.

\bibitem{Gug63}
Guggenheimer H.W., Differential geometry, McGraw-Hill, New York, 1963.

\bibitem{hann02}
Hann C.E., Hickman M.S., Projective curvature and integral invariants,
 \href{http://dx.doi.org/10.1023/A:1020617228313}{\textit{Acta Appl. Math.}} \textbf{74} (2002), 177--193.

\bibitem{hartley04}
Hartley R., Zisserman A., Multiple view geometry in computer vision, Cambridge
 University Press, Cambridge, 2001.

\bibitem{ho12}
Hoff D., Olver P.J., Extensions of invariant signatures for object
 recognition, \href{http://dx.doi.org/10.1007/s10851-012-0358-7}{\textit{J.~Math. Imaging Vision}} \textbf{45} (2013), 176--185.

\bibitem{hong93}
Hong H. (Editor), Special issue on computational quantifier elimination, \textit{Comput.~J.}
\textbf{36} (1993).

\bibitem{hk:focm}
Hubert E., Kogan I.A., Smooth and algebraic invariants of a group action: local
 and global constructions, \href{http://dx.doi.org/10.1007/s10208-006-0219-0}{\textit{Found. Comput. Math.}} \textbf{7} (2007),
 455--493.

\bibitem{kogan03}
Kogan I.A., Two algorithms for a moving frame construction, \href{http://dx.doi.org/10.4153/CJM-2003-013-2}{\textit{Canad.~J.
 Math.}} \textbf{55} (2003), 266--291.

\bibitem{musso09}
Musso E., Nicolodi L., Invariant signatures of closed planar curves,
 \href{http://dx.doi.org/10.1007/s10851-009-0155-0}{\textit{J.~Math. Imaging Vision}} \textbf{35} (2009), 68--85.

\bibitem{olver:yellow}
Olver P.J., Applications of {L}ie groups to differential equations,
\textit{Graduate Texts in Mathematics}, Vol.~107, 2nd ed., Springer-Verlag,
 New York, 1993.

\bibitem{olver01}
Olver P.J., Joint invariant signatures, \textit{Found. Comput. Math.}
 \textbf{1} (2001), 3--67.

\bibitem{vinberg89}
Popov V.L., Vinberg E.B., Invariant theory, in Algebraic geometry. {IV}.~Linear
 algebraic groups. Invariant theory, \textit{Encyclopaedia of Mathematical
 Sciences}, Vol.~55, Editors A.N.~Parshin, I.R.~Shafarevich, Springer-Verlag,
 Berlin, 1994, 122--278.

\bibitem{sato97}
Sato J., Cipolla R., Affine integral invariants for extracting symmetry
 axes, \href{http://dx.doi.org/10.1016/S0262-8856(97)00011-5}{\textit{Image Vision Comput.}} \textbf{15} (1997), 627--635.

\bibitem{tarski:51}
Tarski A., A decision method for elementary algebra and geometry, 2nd ed.,
 University of California Press, Berkeley, 1951.

\bibitem{taub92}
Taubin G., Cooper D.B., Object recognition based on moment (or algebraic)
 invariants, in Geometric Invariance in Computer Vision, Editors J.L.~Mundy,
 A.~Zisserman, \textit{Artificial Intelligence}, MIT Press, Cambridge, MA, 1992,
 375--397.

\bibitem{vang92-1}
Van~Gool L.J., Moons T., Pauwels E., Oosterlinck A., Semi-differential
 invariants, in Geometric Invariance in Computer Vision, Editors J.L.~Mundy,
 A.~Zisserman, \textit{Artificial Intelligence}, MIT Press, Cambridge, MA, 1992,
 157--192.

\bibitem{xu06}
Xu D., Li H., 3-D affine moment invariants generated by geometric
 primitives, in Proceedings of 18th International Conference on Pattern
 Recognition, Vol.~2, \href{http://dx.doi.org/10.1109/ICPR.2006.21}{IEEE Computer Society}, Washington, 2008, 544--547.

\end{thebibliography}
\end{document}